\renewcommand{\ell}{{l}}  
\newcommand{\bE}{\mathbf{E}}
\newcommand{\bR}{\mathbf{R}}
\newcommand{\R}{{\mathbb{R}}}
\newcommand{\Z}{{\mathbb Z}}
\renewcommand{\S}{{\mathbb S}}
\renewcommand{\AA}{{\mathcal A}}
\newcommand{\EE}{{\mathcal E}}
\newcommand{\MM}{{\mathcal M}}
\newcommand{\DD}{{\mathcal D}}
\newcommand{\QQ}{{\mathcal Q}}
\newcommand{\RR}{{\mathcal R}}
\newcommand{\XX}{{\mathcal X}}
\newcommand{\tF}{{\tilde{F}}}
\newcommand{\td}{{\tilde d}}
\newcommand{\tp}{{\tilde p}}
\newcommand{\tOm}{{\tilde \Om}}
\newcommand{\tS}{{\tilde S}}
\newcommand{\tr}{\operatorname{tr}}
\newcommand{\lf}{L^\infty}
\renewcommand{\div}{\operatorname{div}}
\theoremstyle{plain}
\newtheorem{thm}{Theorem}[section]
\newtheorem{prop}[thm]{Proposition}
\newtheorem{cor}[thm]{Corollary}
\newtheorem{lemma}[thm]{Lemma}
\theoremstyle{definition}
\newtheorem{rem}{Remark}[thm]
\numberwithin{equation}{section}
\def\squarebox#1{\hbox to #1{\hfill\vbox to #1{\vfill}}}
\newcommand{\<}{\langle}
\renewcommand{\>}{\rangle}
\renewcommand{\d}{\partial}
\newcommand{\ep}{\epsilon}
\newcommand{\lV}{\lVert}
\newcommand{\rV}{\rVert}
\def\de{\delta}
\def\De{\Delta}
\def\ep{\epsilon}
\def\la{\lambda}
\def\La{\Lambda}
\def\si{\sigma}
\def\om{\omega}
\def\Om{\Omega}
\def\nab{\nabla}
\def\al{\alpha}
\def\les{\lesssim}
\title[Liquid crystal elastomers]
{On the hydrodynamics of hyperbolic liquid crystal elastomers}
\author[X. Hao]{Xiaonan Hao$^1$}
\author[J. Huang]{Jiaxi Huang$^2$}
\author[N. Jiang]{Ning Jiang$^{3}$}
\address{$^1$School of Mathematical Sciences, Peking University,
	\newline\indent
	Beijing
	100871, P.R. China}
\email{\href{mailto:xn\_hao@163.com}{xn\_hao@163.com}}	
\address{$^2$School of Mathematics and Statistics, Beijing Institute of Technology,
		\newline\indent
				Beijing
100081, P.R. China}
\email{\href{mailto:jiaxih@bit.edu.cn}{jiaxih@bit.edu.cn}}
\address{$^3$School of Mathemtical and Statistics, Wuhan University,
	\newline\indent
	Wuhan 430072, P. R. China}
\email{\href{mailto:njiang@whu.edu.cn}{njiang@whu.edu.cn}}
\subjclass[2010]{}
\keywords{Complex fluids, energetic variational approach, hyperbolic, global regularity, small data}
\begin{document}
		
\begin{abstract}
     Liquid crystal elastomers are special cross-linked polymer materials combining
     the large elastic deformability of elastomers with the orientational orders of liquid crystals. 
     This model exhibits markedly different phenomena than the liquid crystal model due to the strong coupling between mechanical elastic deformation and orientation vector. 
    
     Our results are threefold. (i) First we derive the hydrodynamics of liquid crystal elastomers with inertial effect by energetic variational approach inspired by Chun Liu \cite{Liu-2009}.
     (ii) Then we study the hyperbolic liquid crystal elastomers from mathematical point, which is a fully quasilinear hyperbolic system.  
     The local well-posedness for large data is proved by zero-viscosity limit method. (iii) Finally, we show the global regularity for small and smooth initial data near the constant equilibrium in three dimensions, which is achieved by carefully investigating the structure of second-order material derivatives and the cancellations in the system. 
     This is the first global result on hyperbolic liquid crystal elastomers.
\end{abstract}

\date{\today}
\maketitle

		
\setcounter{tocdepth}{1}

\section{Introduction}
Liquid crystal elastomers (LCEs) are fascinating materials, which were first proposed by de Gennes \cite{deGennes,deGennes-1975} in 1969 and first synthesized by K\"upfer-Finkelmann \cite{KF-1991} in the early 1990s, see also the related works \cite{FKR-1981,WT-2007}. This kind of soft smart material exhibits many new phenomena not found in either liquid crystals or polymers, and has many potential applications.

Physically, liquid crystal elastomers are materials that combine the liquid state and anisotropic ordering properties of liquid crystals with elasticity features of rubber-like solids. This combination exhibits new behaviors well beyond the modified liquid crystal or the elastic solid.
A typical characteristic of LCEs is the strong coupling structure between the mechanical deformation and the orientational order.
Due to the interaction, the change of the liquid crystal order can induce macroscopic change in shape, while the orientational order can be also affected by mechanical deformation.
In 1969, de Gennes \cite{deGennes} first proposed the LCEs to solve the pioneering question: \emph{whether cross-linking conventional polymers (different from liquid crystal ones) into a network in the presence of a liquid crystal solvent results into an (anisotropic) liquid crystalline polymer}.
The answer turned out to be negative, that is, conventional rubber does not possess any memory of the anisotropic environment present at cross-linking, see \cite{WT-2007}. 
This shows that the LCEs is a kind of materials different from liquid crystals or polymers, and admits some exceptional physical properties.
In fact, thanks to their excellent actuation performances, the LCEs can be used in plenty of technological applications, including soft actuators, artificial muscles, flexible robots, deformable lenses, stimuli-responsive surfaces, and so on. One please refers to \cite{J-2012,WT-2007} for more details.

Because of the wide range of applications, a better understanding of the intrinsic dynamic behavior of these materials is needed before they can be exploited on an industrial scale.
In \cite{deGennes-1975} de Gennes established the frame of energy method by introducing the Landau-de Gennes free energy density $f_{LdG+Frank}$ and the elastic free energy density $f_{elast}$.
Terentjev and Warner in \cite{WT-2007} made important advances on the continuum theory of liquid crystal elastomers. Jointly with Bladon, they \cite{BTW-1993} proposed the well-known Bladon-Ternetjev-Warner liquid crystal elastomer energy, which is a Gaussian energy of neo-Hookean type, that is quadratic in the gradient of deformation and incorporates the liquid crystal anisotropy.

However, compared with the extensive physical experiments, the mathematical research on liquid crystal elastomers is still in its infancy. So far, there are some works \cite{DD-2000,CDD-2002,DD-2002,CLY-2006,Luo-2010,LC-2012,CGL-2014} concerning on the existence of minimizer of the energy for some proposed models, such as Bladon-Terentjev-Warner model, which is studied by variational methods. Then these results are applied to the analysis and numerical simulation of the phenomena found in the experiment.
In \cite{ZSP-2011,Z-2014}, they considered a continuum model describing the dynamic behavior of nematic liquid crystal elastomers and supplied a numerical scheme to solve the governing equations.

In this article, due to the important applications of LCEs, we will focus on the hydrodynamics of liquid crystal elastomers. 
First, we derive the LCEs model with inertial effect by energetic variational approaches. Then we pay our attentions to the long-time behaviors of LCEs, especially the model without viscosity and without damping. In this setting, the direction of Liquid crystal molecules evolves under a highly nontrivial metric, which has appeared in many areas of fluids and attracts our great attention. With the investigation of structure, we establish the global well-posedness of LCEs for small data in three dimensions.

\subsection{Hydrodynamics of liquid crystal elastomers}
The hydrodynamics of LCEs are the key to understanding the long-time behaviors of LCEs. 
Here inspired by the excellent works of Chun Liu and his collaborators in \cite{Liu-2009,LLZ-CPAM2005,Wu-Xu-Liu-2013,WL-2022,GKL-2018}, our first objective is to derive the hydrodynamics of liquid crystal elastomers with inertial effect by energetic variational approaches.
The approaches are motivated by the seminal work of
Rayleigh \cite{Ra} and Onsager \cite{On1,On2}. The framework, including Least Action
Principle and Maximum Dissipation Principle, provides a unique, well-defined,
way to derive the coupled dynamical systems from the total energy functionals and
dissipation functions in the dissipation law.  
We will exhibit the derivation of LCEs in Section \ref{sec-variation} in detail.

Now, we introduce the LCEs system that we will consider in this article. The LCEs consists of the following equations of the
velocity field $u(x, t)\in \R^n$, the deformation gradient $F(x,t)\in \R^n\times \R^n$ and the direction field $d(x, t) \in \S^{n-1}$, and $(x, t) \in \R^n\times \R^+$:
\begin{equation}\label{PHLC}
	\begin{aligned}
		\left\{ \begin{aligned}
			&\partial_t u + u \cdot \nabla u  + \nabla p = \nab\cdot\big(FF^{T}\big)- \div (\nabla d \odot \nabla d) + \div \sigma\,, \\
			&\d_t F+u\cdot\nab F=\nab u F\,,\\
			&\rho_1 \ddot{d} = \Delta d + \Gamma d + \lambda_1 (\dot{d} + B d) + \lambda_2 A d\,,
		\end{aligned}\right.
	\end{aligned}
\end{equation}
with the constraints
\begin{align} \label{constraints1}
	&\div u=0,\quad \nab\cdot F^T=0,\quad
	\sum_m F_{mj}\d_m F_{ik}=\sum_m F_{mk}\d_m F_{ij},\quad
	|d|=1.
\end{align}
We should mention that the first two constraints in \eqref{constraints1} are the consequences of the incompressibility condition. The third compatibility condition is from the Lagrangian derivatives commute, i.e. $\d_{X_k}\tilde F_{ij}=\d_{X_j}\tilde F_{ik}$ with $\tilde F(X,t)=F(x(X,t),t)$. The last one in \eqref{constraints1} comes from $d(t,x)\in\S^2$ which is a unit vector. A detailed explanation of the constraints can be found in \cite{LLZ}.

In the above system, we denote the component $F_{ij}$ as the element in the $i$-th row and $j$-th column of matrix $F$, and denote $F^T$ as the transpose of matrix $F$. We will adopt the notations of
\begin{align*}
	(\nab \cdot F)_i=\d_j F_{ij},\quad (\nab u)_{ij}=\d_j u_i.
\end{align*}
The superposed dot denotes the material
derivative $\d_t+u\cdot\nab$.
The second nonlinear term in $u$-equation is given by
\[(\nabla d \odot \nabla d)_{ij} = \sum_{k=1}^n\partial_i d_k \partial_j d_k, \quad \big(\div (\nabla d \odot \nabla d)\big)_i = \sum_{j,k=1}^n \d_j(\partial_i d_k \partial_j d_k).\]

We denote the rate of strain tensor $A$ and skew-symmetric part $B$ of the strain rate as
\begin{equation*}
	\begin{aligned}
		A = \tfrac{1}{2}(\nabla u + \nabla^\top u)\,,\quad B= \tfrac{1}{2}(\nabla u - \nabla^\top u)\,,\end{aligned}
\end{equation*}
whose components are given by $A_{ij} = \tfrac{1}{2} (\partial_j u_i + \partial_i u_j)$, $B_{ij} = \tfrac{1}{2} (\partial_j u_i - \partial_i u_j)$.
We also define $N = \dot d + B d$ as the rigid rotation part of director changing rate by fluid vorticity, where $(B d)_i =B_{ki} d_k$.

The stress tensor $\sigma$ has the following form:
\begin{equation}\label{Extra-Sress-sigma}
	\begin{aligned}
		\sigma_{ji}=  \nu_1 d_k A_{kp}d_p  d_i d_j + \nu_2  d_j N_i  + \nu_3 d_i N_j  + \nu_4 A_{ij} + \nu_5 A_{ik}d_k d_j   + \nu_6 d_i A_{jk}d_k \,.
	\end{aligned}
\end{equation}
These coefficients $\nu_i (1 \leq i \leq 6)$ which may depend on material and temperature, are usually called Leslie coefficients, and are related to certain local correlations in the fluid. Usually, the following relations are frequently introduced in the literatures \cite{Ericksen-1961-TSR,Leslie-1968-ARMA, Wu-Xu-Liu-2013}.
\begin{equation}\label{Coefficients-Relations}
	\lambda_1=\nu_2-\nu_3\,, \quad\lambda_2 = \nu_5-\nu_6\,,\quad \nu_2+\nu_3 = \nu_6-\nu_5\,.
\end{equation}
The first two relations are necessary conditions in order to satisfy the equation of motion identically, while the third relation is called {\em Parodi's relation}, which is derived from Onsager reciprocal relations expressing the equality of certain relations between flows and forces in thermodynamic systems out of equilibrium. Under Parodi's relation, we see that the dynamics of an incompressible nematic liquid crystal flow involve five independent Leslie coefficients in \eqref{Extra-Sress-sigma}.

In \eqref{PHLC}, the coefficient $\rho_1 > 0$ is the inertial constant, and the Lagrangian multiplier $\Gamma$ is (which ensures the geometric constraint $|d|=1$):
\begin{equation}\label{Lagrange-Multiplier}
	\Gamma = - \rho_1 |\dot{d}|^2 + |\nabla d|^2 - \lambda_2 d^\top A d\, .
\end{equation}
We remark that the last two terms in the third equation of \eqref{PHLC} is the so-called kinematic transport, i.e.
\begin{equation}\label{kinematic transport}
	g= \lambda_1 (\dot{d} + B d) + \lambda_2 A d\,,
\end{equation}
which represents the effect of the macroscopic flow field on the microscopic structure. The material coefficients $\lambda_1$ and $\lambda_2$ reflect the molecular shape and the slippery part between the fluid and particles. The first term represents the rigid rotation of the molecule, while the second term stands for the streching of the molecule by the flow.

The LCEs system \eqref{PHLC} has close relation to the traditional liquid crystals system and elastodynamics. Particularly, in this paper we consider the liquid crystal elastomers with inertial effect. In other words, we build up our LCEs models on the so-called (parabolic)\footnote{when the viscosity term $\mu\Delta u$ is involved, we name it parabolic-hyperbolic, while for the invisid case $\mu=0$, as we consider in this paper, it is called hyperbolic.}-hyperbolic hydrodynamic models of Ericksen and Leslie. More specifically, when the elasiticity $F=I$, the LCEs system reduces to Ericksen-Leslie's hyperbolic liquid crystal model, which was introduced by Ericksen and Leslie in their pioneering works \cite{Ericksen-1961-TSR} and \cite{Leslie-1968-ARMA} respectively in the 1960's.
The parabolic-hyperbolic liquid crystal is a nonlinear coupling of incompressible Navier-Stokes equations (Euler equations for invisicd case) to wave map system with target manifold $\mathbb{S}^2$.
Jiang-Luo established in \cite{Jiang-Luo-2018} the well-posedness in the context of classical solutions. Furthermore, with an additional assumption on the coefficients which provides a damping effect, and the smallness of the initial energy, the unique global classical solution was established.
For the simplified Ericksen-Leslies system, Cai-Wang \cite{CW}  proved the global regularity of the  hyperbolic liquid crystal near the constant equilibrium by employing the vector field method.
Later, the general incompressible Ericksen-Leslies system without kinematic transport and simplified compressible hyperbolic liquid crystal model was considered by Huang-Jiang-Luo-Zhao in \cite{HJLZ1,HJLZ2}. They proved the global regularity and scattering near the equilibrium by space-time resonance method. Recently, Huang-Jiang-Zhao  \cite{HJZ-2023} proved the almost global well-posedness for  the simplified Liquid crystal in two dimensions.
Finally, the important special case of liquid crystal,
is the so-called parabolic Ericksen-Leslie system, which was extensively studied from the mid 1980s. For a more complete review of the works, please see the reference \cite{LW-survey}.

When the direction $d$ of liquid crystal molecule is a constant unit vector, the LCEs system reduces to incompressible elastodynamics, which have been extensively studied. Lin-Liu-Zhang \cite{LLZ-CPAM2005} and Chen-Zhang \cite{CZ-2006} studied the the global existence of incompressible elastodynamics for small data. Sideris-Thomases \cite{ST-2005,ST-2007} and Lei \cite{Lei2016} investigated the corresponding inviscid case. In a different way, Wang \cite{Wang-2017} established the global existence and the asymptotic behavior for the 2D incompressible isotropic elastodynamics for sufficiently small and smooth initial data in the Eulerian coordinates formulation.
Subsequently, Cai-Lei-Lin-Masmoudi \cite{CLLM-CPAM2019} showed  the vanishing viscosity limit for incompressible viscoelasticity.
For the more complete researches in this direction, one please refers to \cite{Lin-CPAM2012}.

To analyze the behaviors of LCEs from mathematical point, we focus on the special but important model of liquid crystal elastomers: set $\nu_i=0$ for $i=1,\cdots,6$, and hence $\la_1=\la_2=0$, $\rho_1=1$. Then the system \eqref{PHLC} is reduced to a fully hyperbolic nonlinear system. More precisely, we study the liquid crystal elastomer in the form
\begin{equation}        \label{ori_sys}
	\left\{
	\begin{aligned}
		&\d_t u+u\cdot \nab u+\nab p=\nab\cdot(FF^{T})- \div  (\nab d\odot\nab d),\\
		&\d_t F+u\cdot\nab F=\nab u F,\\
		&D_t^2 d-\De d=(-|D_t d|^2+|\nab d|^2)d,\\
		&(u,F,d,\d_t d)\big|_{t=0}=(u_0,F_0,d_0,d_1),
	\end{aligned}
	\right.
\end{equation}
on $\R^n\times \R^+$ with the constraints
\begin{align} \label{constraints1-re}
	&\div u=0,\quad \nab\cdot F^T=0,\quad
	\sum_{m}F_{mj}\d_m F_{ik}=\sum_m F_{mk}\d_m F_{ij},\quad
	|d|=1.
\end{align}

In summary, the LCEs system \eqref{ori_sys}-\eqref{constraints1-re} studied in the current paper has three new features, compared to the previous related models.
Firstly, the system contains both liquid crystal (Ericksen-Leslie) hydrodynamics and elastodynamics.
Secondly, it is coupled by the incompressible Euler, rather than the Navier-Stokes system, which is inviscid.
Thirdly, it has inertial effect, which involves second order material derivatives.
All of these features characterize \eqref{ori_sys} as a fully hyperbolic system with high nonlinearities and nontrivial geometric structure.

\subsection{The main results}
Once the LCEs is derived, our second objective in this paper is to understand 
the behaviors of LCEs. Here we start with the local well-posedness of \eqref{ori_sys}-\eqref{constraints1-re} for large data, which is proved by zero-viscosity limit method.
More importantly, we establish the global in time well-posedness for small initial data in three dimensions, where the cancellations and the structures in the system play crucial roles.

The following is our first main theorem, whose proof is provided in the Appendix \ref{local}.

\begin{thm}[Local well-posenedss for large data in dimensions $n\geq 2$]\label{LWP}
	Let dimensions $n\geq 2$, integer $k_0=3$, and let the initial data satisfy $(u_0,F_0,\nab d_0,d_1)\in H^{k_0}(\R^n)$, $|d_0|=1$, $d_0\cdot d_1=0$. The initial energy is defined as $E_\si^{in}=\frac{1}{2}(\|u_0\|_{H^{\si}}^2+\|F_0\|_{H^{\si}}^2+\|\nab d_0\|_{H^{\si}}^2+\|d_1\|_{H^{\si}}^2)$ for any $\si\geq k_0$. If the initial energy $E_{k_0}^{in}<\infty$, then there exists $T>0$, depending only on $E_{k_0}^{in}$, such that the Cauchy problem of the system \eqref{ori_sys}-\eqref{constraints1-re} admits a unique solution $(u,F,d)$ such that $(u,F,\nab d,D_t d)\in L^\infty(0,T;H^s)$. Moreover, the solution $(u,F,d)$ satisfies
	
	i) Energy estimates:
	\begin{align}   \label{LE-local}
		\sup_{0\leq t\leq T}\Big(\|u\|_{H^{k_0}}^2+\|F\|_{H^{k_0}}^2+\|\nab d\|_{H^{k_0}}^2+\|D_t d\|_{H^{k_0}}^2\Big)(t)\leq C_0,
	\end{align}
	where the positive constant $C_0$ depends only on $E_{k_0}^{in}$.
	
	ii) Propagation of regualrity: for any $\si\geq k_0$, we have
	\begin{align*}
		\sup_{0\leq t\leq T}\Big(\|u\|_{H^\si}^2+\|F\|_{H^\si}^2+\|\nab d\|_{H^\si}^2+\|D_t d\|_{H^\si}^2\Big)(t)\leq C_\si E^{in}_\si.
	\end{align*}
\end{thm}

Our second main theorem is as follows. Here the precise definitions of the functional spaces will be given in Section \ref{sec-2.2}.
\begin{thm}[Global well-posedness for small data in three dimensions]           \label{Ori_thm0}
	Let integer $N\geq 9$, $0<\de\leq 1/8$ and $M>0$ be three given constants and the initial data $(u_0,F_0-I,d_0,d_1)\in H^N_\Lambda$. Then there exists $\ep>0$ sufficiently small depending on $N$, $\de$ and $M$ such that, for all initial data $(u_0,F_0,d_0,d_1)$ satisfying the constriants \eqref{constraints1} with
	\begin{equation}  \label{Main-ini0}
		\|(u_0,F_0-I,d_0,d_1) \|_{H^N_\Lambda}\leq M,\quad \|(u_0,F_0-I,d_0,d_1) \|_{H^{N-2}_\Lambda}\leq \ep,
	\end{equation}
the hyperbolic liquid crystal elastomers system \eqref{ori_sys}-\eqref{constraints1-re} is global in time well-posed in three dimensions. Moreover, for all $t\in[0,+\infty)$ the solution $(u,F-I,d)\in H^{N}_Z$ has the bounds
	\begin{equation}  \label{energybd0}
		E^{1/2}_N(u,F-I,d;t)\les \<t\>^\de,\quad E^{1/2}_{N-2}(u,F-I,d;t)\les \ep.
	\end{equation}
\end{thm}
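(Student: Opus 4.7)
The plan is to close a bootstrap argument built around the vector field method adapted to the three wave structures in \eqref{ori_sys}: the Euler part for $u$, the elastic transport part for $F-I$, and the wave-map part for $d$. I would start by introducing the perturbation variable $G=F-I$, so that the equilibrium $(u,G,d)=(0,0,d_*)$ with $d_*\in\S^2$ is the zero of a genuinely hyperbolic quasilinear system. Under the constraints \eqref{constraints1}, the equation for $u$ rewrites as an incompressible Euler-type equation in which $\nab\cdot(FF^\top)=\nab\cdot G+\nab\cdot(G G^\top)+\nab\cdot(G+G^\top)$ contributes genuine wave propagation in $(u,G)$ after eliminating the pressure via the Leray projector. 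The direction field $d$ satisfies a perturbed wave equation, which around $d=d_*$ should be treated as an $\S^2$-valued wave map with the material derivative $D_t$ replacing $\d_t$. Orthogonal projection onto $T_{d_*}\S^2$ localizes the constraint $|d|=1$ into a quadratic correction, so only two scalar wave components need to be propagated.

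Next I would set up the vector field family $Z$ (and its subfamily $\Lambda$ acting on initial data) adapted to the inviscid, incompressible and quasilinear nature of the problem. These are the Klainerman fields $\d_{t,x},\ \Omega_{ij}=x_i\d_j-x_j\d_i$, the scaling $S=t\d_t+x\cdot\nab$ (possibly with a weight correction for $u,G$ so that $\div u=0$ and the curl-type constraint on $G$ are preserved), and the Lorentz boosts $L_i=t\d_i+x_i\d_t$ for the wave equation in $d$. The key algebraic step is to verify that these vector fields commute, up to tame commutators, with the three sub-systems: rotation and scaling are straightforward; boosts are only used on the $d$-equation. I would then compute the evolution of $Z^\alpha(u,G,\nab_{t,x}d)$ and show that the commutators produce terms controllable by the weighted high-norm $E_N$ and the low-norm $E_{N-2}$.

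With the linear framework in place, I would run a standard weighted energy estimate. Multiplying each commuted equation by the corresponding quantity and integrating produces
\begin{equation*}
\tfrac{d}{dt}E_N(t)\lesssim \sum_{|\alpha|\le N}\Vert Z^\alpha(\text{nonlinearity})\Vert_{L^2}\cdot\Vert Z^\alpha(u,G,\nab_{t,x}d)\Vert_{L^2},
\end{equation*}
where the nonlinearities are schematically of the type $u\cdot\nab(\cdot)$, $\nab u\cdot G$, $(\nab d)^2\cdot d$, etc. Using the Klainerman--Sobolev inequality to pass from $E_{N-2}$ to pointwise decay $\langle t\rangle^{-1}$ for the top quadratic terms, and absorbing the worst transport terms via the commutation identity $[Z,u\cdot\nab]=(Z u)\cdot\nab+\cdots$, gives a differential inequality of the form $\tfrac{d}{dt}E_N\lesssim \ep\,\langle t\rangle^{-1}E_N$, whence $E_N(t)\lesssim \langle t\rangle^{C\ep}$; choosing $\ep$ small relative to $\delta$ yields the first bound in \eqref{energybd0}. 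For the low norm, the same scheme with $N-2$ derivatives and the sharper decay from pointwise bounds yields a closed inequality $\tfrac{d}{dt}E_{N-2}\lesssim \langle t\rangle^{-1-\sigma}E_{N-2}^{3/2}$ for some $\sigma>0$, giving the $\ep$ bound by continuity.

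The main obstacle, I expect, is the genuine lack of dispersive gain in the Euler-elasticity subsystem: $(u,G)$ propagates by a first-order symmetric hyperbolic system whose linearization is the elastic wave equation, but the coupling with the wave-map part through $\div(\nab d\odot\nab d)$ and through the shared transport $u\cdot\nab$ creates derivative losses that are not removed by rotations and scaling alone. Overcoming this requires exploiting the three structural cancellations hinted at in the abstract: (i) the curl-type compatibility $F_{mj}\d_m F_{ik}=F_{mk}\d_m F_{ij}$, which turns apparently quasilinear terms in the $F$-equation into null-form-like expressions; (ii) the incompressibility $\div u=\nab\cdot F^\top=0$, which allows trading one bad derivative on $u$ or $G$ for a transversal one; and (iii) the $|d|=1$ orthogonality $d\cdot\nab d=0$, used when commuting with $S$ to eliminate the worst $\nab d\otimes\nab d$ terms. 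Verifying that these cancellations survive commutation with the full set of vector fields $Z^\alpha$ up to order $N$, and that the pressure recovery via $\nab p=\nab\De^{-1}\nab\cdot(\cdots)$ respects the weighted estimates, is the step where most of the technical work lives.
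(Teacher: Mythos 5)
Your outline captures the right broad philosophy (bootstrap, vector fields, exploiting the constraints), but several specific mechanisms you propose differ from, or would break, the argument the paper actually runs, and at least two of them are genuine gaps rather than stylistic choices.

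First, the Lorentz boosts $L_i=t\partial_i+x_i\partial_t$ are not used and cannot be used here. They commute with the flat d'Alembertian $\Box$, but the $\phi$-equation is a quasilinear wave with $D_t^2=(\partial_t+u\cdot\nabla)^2$ in place of $\partial_t^2$, and the $(u,F)$ subsystem is a first-order elastic-type system with its own speed and a transport term $u\cdot\nabla$. The boosts produce bad commutators with $u\cdot\nabla$ and cannot simultaneously be adapted to the elastic and direction-field propagation. The paper works entirely with $Z=\{\partial_t,\partial_i,\tilde\Omega_i,\tilde S\}$ and recovers the missing decay through the Klainerman--Sideris weighted $L^2$ machinery (Lemma~\ref{Xphi}, Lemma~\ref{Lem5.5}, Proposition~\ref{XX-prop}) plus the radial decomposition $\nabla=\omega\partial_r-\frac{\omega}{r}\times\Omega$ and the divergence-free-improved pointwise bound \eqref{omf} on $\omega\cdot u$ and $\omega_k H_{kj}$. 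If you insist on boosts, the commutator structure will not close.

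Second, your treatment of the constraint $|d|=1$ by orthogonal projection onto $T_{d_*}\S^2$ ``into a quadratic correction'' is weaker than what the proof needs. The paper instead passes to angle variables $d=(\cos\phi_1\cos\phi_2,\sin\phi_1\cos\phi_2,\sin\phi_2)$, which has the effect (Remark 1.4, \`a la Tao/Tataru renormalization) of turning the apparently quadratic right-hand side $(-|D_td|^2+|\nabla d|^2)d$ into the genuinely cubic remainders $\mathcal R_1,\mathcal R_2$ involving $\sin\phi_2,\tan\phi_2$. That extra order of smallness is what makes the estimates of $I_3, J_4,\dots,J_7, K_3, L_3$ integrable in time; a mere quadratic projection error would reinstate terms of the schematic form $\nabla d\cdot\nabla d$ with only $\langle t\rangle^{-1}$ decay, which is exactly the borderline you are trying to avoid.

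Third, your proposed closed low-order inequality $\frac{d}{dt}E_{N-2}\lesssim\langle t\rangle^{-1-\sigma}E_{N-2}^{3/2}$ cannot hold. Because of derivative loss (the quasilinear $D_t^2\phi$ term, and $\nabla u\, H$-type terms in the $H$-equation), the low-order energy does not close by itself: the paper's Proposition~\ref{LE-prop} has $E_N^{1/2}$ on the right-hand side, i.e. $\frac{d}{dt}\mathcal E_{N-2}\lesssim\langle t\rangle^{-4/3+\delta}E_{N-2}E_N^{1/2}(1+E_{N-2})$, and closes only because $E_N^{1/2}\lesssim\langle t\rangle^\delta$ keeps the time integral finite. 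Relatedly, you do not mention the modified energies $\mathcal E_a$ and $\mathbf E_a$ with the correction integrals $\int\frac12|Z^au\cdot\nabla\phi|^2+D_tZ^a\phi\,(Z^au\cdot\nabla\phi)\,dx$ and the $\sin^2\phi_2$-weighted pieces; these are precisely what absorb the top-order $D_t(Z^au\cdot\nabla\phi)$ contribution from $h_a$ and enable the cancellations $I_{2,3}\leftrightarrow J_{1,2}$, $I_{3,1}\leftrightarrow J_{4,2}$, $K_{2,1}\leftrightarrow L_{1,2}$. Without a correction of this type your energy estimate will carry an uncontrolled top-order term.

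Finally, the passage from Theorem~\ref{Ori_thm} to Theorem~\ref{Ori_thm0} requires the inductive comparison of $\|\nabla\Lambda^a\psi_0\|_{L^2}$ with $\|\nabla\Lambda^a d_0\|_{L^2}$ (both ways) to transfer the initial smallness and the final bounds between $d$ and $\phi$; this step is mechanical but does need to be spelled out.
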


\begin{rem}	
Different from parabolic-hyperbolic liquid crystal \cite{Jiang-Luo-2018}, the model \eqref{ori_sys} is a fully nonlinear hyperbolic system, and hence the null structure of nonlinearities (especially the second-order material derivative) plays a more important role. Precisely, the viscosity term $\mu\De u$ in $u$-equation brings many benefits in decay estimates and energy estimates, which also enable ones to control the direction $d$ well. However, due to the lack of viscosity in the current paper, we must turn our attention to the null structure of the second-order material derivative in $d$-equation. 
\end{rem}

Since the system are considered near constant equilibrium $(\vec{0},I,\vec{i})$, it is natural to reformulate the system \eqref{ori_sys}.
Precisely, we denote
\begin{equation*}
	H=F-I,\quad d=(\cos \phi_1\cos\phi_2,\sin\phi_1\cos\phi_2,\sin\phi_2),
\end{equation*}
where the $\phi_1$ represents the angle between
the $x$-axis and the projection of $d$ onto the $x-y$ plane, and $\phi_2$ represents the angle
between $d$ and the $x-y$ plane.
Then the system \eqref{ori_sys} is rewritten as
\begin{equation}       \label{EL-elas-1}
	\left\{
	\begin{aligned}
		&\d_t u-\nab\cdot H+\nab p=-u\cdot \nab u+\nab\cdot(HH^T)- \div  (\nab \phi\odot \nab \phi)+\mathcal R_1,\\
		&\d_t H-\nab u =-u\cdot \nab H+\nab u H,\\
		&D_t^2 \phi-\De \phi=\mathcal R_2,\\
		&(u,H,\phi,\d_t\phi)(x,0)=(u_0,H_0,\psi_{0},\psi_{1})(x),
	\end{aligned}
	\right.
\end{equation}
with $\phi=(\phi_1,\phi_2)$, $\psi_i=(\psi_{i,1},\psi_{i,2})$ for $i=0,1$,  and the constraints
\begin{align}     \label{cst-re}
	&\div u=0,\quad \nab\cdot H^T=0,\\   \label{curl}
	&\d_j H_{ik}-\d_k H_{ij}=\sum_m (H_{mk}\d_m H_{ij}-H_{mj} \d_m H_{ik}),
\end{align}
where $\mathcal R_1$ and $\mathcal R_2$ are higher order terms
\begin{align}  \label{R1}
	&\mathcal R_1=\sum_j\d_j(\sin^2\phi_2 \nab\phi_1 \d_j \phi_1),\\\label{R2}
	&\mathcal R_2:=\Big(2\tan\phi_2(D_t\phi_1 D_t\phi_2-\nab\phi_1\nab\phi_2),\frac{1}{2}\sin 2\phi_2(-|D_t\phi_1|^2+|\nab\phi_1|^2)\Big)^T.
\end{align}

For the reformulated system \eqref{EL-elas-1}, we have the following result:
\begin{thm}           \label{Ori_thm}
Let integer $N\geq 9$, $0<\de<1/8$ and $M>0$ be three given constants and $(u_0,H_0,\psi_{0},\psi_{1})\in H^N_\Lambda$. Then there exists $\ep>0$ sufficiently small depending on $N,\ \de$ and $M$ such that, for all initial data $(u_0,H_0,\psi_{0},\psi_{1})$ satisfying the constriants \eqref{cst-re} and \eqref{curl} with
	\begin{equation}\label{MainAss_dini}
		\|(u_0,H_0,\psi_{0},\psi_{1}) \|_{H^N_\Lambda}\leq M,\quad \|(u_0,H_0,\psi_{0},\psi_{1}) \|_{H^{N-2}_\Lambda}\leq \ep,
	\end{equation}
	the reformulation of liquid crystal elastomers system \eqref{EL-elas-1} is global in time well-posed in three dimensions. Moreover, for all $t\in[0,+\infty)$ the solution $(u,H,\phi)\in H^{N}_Z$ has the bounds
	\begin{equation}     \label{energybd}
		E^{1/2}_N(u,H,\phi;t)\les \<t\>^\de,\quad E^{1/2}_{N-2}(u,H,\phi;t)\les \ep.
	\end{equation}
\end{thm}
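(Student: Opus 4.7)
The plan is to run a standard continuity/bootstrap argument built around the vector field method of Klainerman, tailored to the fact that both sub-systems of \eqref{EL-elas-1} are linearly wave equations of unit speed. Linearly, combining the first two equations and using $\div u=0$, $\nab\cdot H^T=0$ gives $(\d_t^2-\De)u=\text{quadratic}$, while the third equation is already in wave form $(\d_t^2-\De)\phi=\text{quadratic}$. Thus, after introducing the Klainerman family $Z\in\{\d_\a,\Omega_{ij}=x_i\d_j-x_j\d_i,S=t\d_t+x\c\nab\}$ (and, where needed, the Lorentz boosts adapted to the symmetric--hyperbolic elastic block, as in Sideris--Thomases), one obtains a generalized energy $E_N(t)=\sum_{|\a|\le N}\|Z^\a(u,H,\nab_{t,x}\phi)\|_{L^2}^2$ that should satisfy the usual $t^\de$ slow growth at top order and remain $\ep$-small at order $N-2$. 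The bootstrap assumption is precisely \eqref{energybd}; I would close it by proving the same bounds with strictly smaller implicit constants.

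For the energy step, commute each vector field through \eqref{EL-elas-1} and track the commutator errors. The commutator $[Z,\d_t-\nab\cdot]$ on the $(u,H)$-block produces tangential terms of the same form, and the constraints \eqref{cst-re}--\eqref{curl} must be shown to be preserved under the flow (this is standard once one checks that $\nab\cdot H^T$ satisfies a homogeneous transport equation modulo cubic terms, and the compatibility identity \eqref{curl} evolves similarly). On the $\phi$-block, the crucial point is that the second-order material derivative $D_t^2\phi=(\d_t+u\c\nab)^2\phi$ is quasilinear in $u$; commuting $Z$ through $D_t^2-\De$ generates terms of the schematic form $(Zu)\c\nab D_t\phi$ and $u\c\nab Zu\c\nab\phi$, which have to be absorbed by $E_N$ using the $L^\infty$ control coming from $E_{N-2}$. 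The Hodge decomposition of $(u,H)$ into its curl-free and divergence-free parts (Sideris--Thomases) is what should make the elastic block behave like a genuine wave equation with the full set of commuting fields.

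Pointwise decay is obtained from the Klainerman--Sobolev inequality: any quantity whose $Z$-derivatives up to $|\a|\le N-2$ are controlled in $L^2$ decays like $\<t+|x|\>^{-1}\<t-|x|\>^{-1/2}$ in three dimensions. Plugging these rates into the nonlinearities $\mathcal R_1,\mathcal R_2$ and the quadratic terms on the right of the $u,H$ equations, one naively gets $t^{-1}$ decay for the forcing --- which is borderline non-integrable in $L^2_t$. This is exactly where null structure is indispensable: the terms $\nab\c(\nab\phi\odot\nab\phi)$, $\nab\c(HH^T)$, and $u\c\nab u$ all have the divergence form or the semilinear null form structure that, when paired against the good derivative $\d_t+\hat x\c\nab$ inside $Z^\a$, convert one factor of $t^{-1/2}$ into $t^{-3/2}$. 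The quasilinear pieces of $D_t^2\phi$ should likewise be reorganized so that the worst interaction $u\c\nab\d_t\phi$ hits only the tangential derivative of $\phi$, matching the Alinhac-style null condition for quasilinear wave equations.

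The main obstacle, I expect, will be the coupling of the three wave systems through the quasilinear $u\c\nab$, because $u$ enjoys only wave-equation decay rather than the integrable decay one gets from a Navier--Stokes viscous term --- this is precisely the contrast with \cite{Jiang-Luo-2018} flagged in the remark. In particular, controlling $\|Z^\a(u\c\nab D_t\phi)\|_{L^2}$ at the top order $|\a|=N$ requires distributing the vector fields so that at most $N-2$ of them land on the factor that must be estimated pointwise; the remaining commutator with one loss of derivative has to be reabsorbed into $E_N$ via a Gronwall argument producing the $\<t\>^\de$ growth. Once this is done, a parallel estimate at level $N-2$, where every factor can afford pointwise decay, yields the $\ep$-small bound, and the continuity argument combined with the local existence theory in $H^N_\Lambda$ upgrades these a priori estimates to the claimed global solution, proving Theorem~\ref{Ori_thm}.
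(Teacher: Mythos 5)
Your high-level outline (commute vector fields, obtain decay, close a two-tier bootstrap with slow growth at order $N$ and smallness at order $N-2$) matches the paper's strategy, but several load-bearing pieces are either wrong or missing, and as stated the argument would not close.

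First, the vector-field family: you invoke ``Lorentz boosts adapted to the symmetric--hyperbolic elastic block, as in Sideris--Thomases.'' Sideris--Thomases in fact do \emph{not} use Lorentz boosts for incompressible elastodynamics, and neither does this paper. The boosts fail to interact well with the pressure (the Leray projection is not boost-covariant), with the divergence constraints, and with the material derivative $D_t=\d_t+u\cdot\nab$; this is exactly why the paper works with only $\{\d_t,\nab,\tOm,\tS\}$, and consequently cannot rely on the full Klainerman--Sobolev decay $\<t+r\>^{-1}\<t-r\>^{-1/2}$ that you quote. The substitute is the Klainerman--Sideris weighted $L^2$ norm $\XX_a(t)$ (the $\<t-r\>$-weighted derivatives), whose control by $E_N,E_{N-2}$ is the content of Proposition~\ref{XX-prop} and which feeds the $\<t\>^{-1}$ decay of $u,H,\nab\phi$ through Lemma~\ref{lem4.3}. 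Your plan does not contain this ingredient, and without it the pointwise decay estimates you assume are not available.

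Second, and more seriously, you do not address the derivative loss coming from $D_t^2\phi$ beyond saying it ``has to be reabsorbed via Gronwall.'' The top-order term $D_t(Z^au\cdot\nab\phi)$ (with $|a|=N$) cannot be estimated directly: $Z^N u$ and $D_t Z^N\phi$ are both at the top of the energy hierarchy and neither enjoys pointwise control. The paper resolves this with the modified functionals $\EE_a$ and $\bE_a$, which add cross terms such as $\int D_tZ^a\phi\,(Z^a u\cdot\nab\phi)\,dx$ and $\tfrac12\int|Z^a u\cdot\nab\phi|^2\,dx$ so that the problematic terms appear as exact time derivatives. Moreover, after this manipulation a boundary term $\int \De Z^a\phi\,(Z^a u\cdot\nab\phi)\,dx$ remains; it is cancelled not within the $\phi$-block, but by the matching term $I_{2,3}$ produced by the $u$-equation energy (and similarly the $\sin^2\phi_2$ corrections $\bR^\phi_{a;1},\bR^\phi_{a;2}$ handle the higher-order term $\mathcal R_1$). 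This cross-system cancellation between the $(u,H)$-energy and the $\phi$-energy is precisely the structural observation that makes the estimate close, and it is absent from your proposal. Finally, the null structure you invoke is made quantitative in the paper through the radial decomposition $\nab=\om\d_r-\frac{\om\times\Om}{r}$ together with the enhanced $\<r\>^{-3/2}$ decay of $\om\cdot u$ and $\om\cdot H^T$ coming from the divergence constraints (Lemma~\ref{Decay-phi}), not via a ``good derivative $\d_t+\hat x\cdot\nab$'' paired against divergence-form nonlinearities. Fixing these three points --- dropping the boosts in favor of the $\XX$-norms, introducing the corrected energies and their cross cancellation, and replacing the vague null-form claim with the $\om\cdot u$, $\om\cdot H^T$, $\nab=\om\d_r-\frac{\om\times\Om}{r}$ mechanism --- would bring your outline into alignment with the actual proof (Propositions~\ref{Prop_va} and \ref{LE-prop} plus the continuity argument in Section~\ref{sec-2.3}).
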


\begin{rem}
	Here we introduce the new variables, angles $\phi$, in order to distinguish the nonlinear terms. In fact, the nonlinear terms $(-|D_t d|^2+|\nab d|^2)d$ are essentially cubic terms, which has been discussed by Tataru \cite{Tataru} and Tao \cite{Tao2} using gauge theory or renormalization. This feature can also be exhibited by rewriting the equations of $d$ in terms of new variables $\phi$ in our article. Hence, the main obstacle in proving global existence is the quadratic terms from second-order material derivatives.
\end{rem}

\subsection{Main ideas and novelties}
The main strategy to prove global regularity relies on an interplay between the control of high order energies and decay estimates, which is based on the vector-fields method and weighted energy method.
Our proof includes the following three main ingredients:
\begin{itemize}
	\item[i)] \emph{Decay estimates}.
	The decay estimates containing the time and space variables play crucial role in dealing with the energy estimates, especially for the convective term near the light cone.

	\item[ ii)] \emph{Weighted $L^2$-bounds}.
	The weighted $L^2$-energy is a bridge that connects decay estimates and energy estimates, whose proof relies on  the Klainerman-Sideris type estimate and the special structure of nonlinearities.

	\item[iii)] \emph{Energy estimates}.
	With the help of the above estimates,  we  show a robust energy that better matches the cancellations and structures of the equations. In fact, the desired energy can be obtained by the smallness of the low-order energy estimates.  Many  new problems arise at this stage.
\end{itemize}

There are several new difficulties need to be dealt with. Firstly, the second-order material derivative appearing in the $\phi$-equation brings obstacle for us to enclose  the energy estimates.  This requires more refined analysis for the material derivative. Secondly, the phenomena of derivative loss appears frequently in the system. We have to investigate the nonlinearities and adjust the energy norms carefully. Finally, due to the decays of wave equations are $\<t\>^{-1}$, it may generate log-type growth that prevents the global well-poseness. Thus we need dig out  the null structure of the system to gain more decays.

To get around the above difficulties, some key observations and novelties are emphasized as follows:

\emph{1. Second-order material derivative.} Essentially, the $\phi$-equations in \eqref{EL-elas-1} are cubic wave equations with high nontrivial metric, which is called \emph{acoustical metric} dependingon the velocity $u$. The acoustical metric (or second-order material derivative) will cause great obstruction towards to the global well-posedness. Fortunately, thanks to divergence free condition, the quadratic terms $u\cdot\nab\d_t\phi$ and $\d_t u\!\cdot\!\nab\phi$ from second-order material derivative can gain enough decay rates.  More precisely, from the good unknown ``$\om\d_t\phi+\nab\phi$" or the decomposition $\nab=\om\d_r-\frac{\om\times\Om}{r}$, we have the dompositions
\[ u\cdot\nab\d_t\phi=u\cdot(\om\d_t^2\phi+\nab\d_t\phi)-u\cdot\om \d_t^2\phi=u\cdot \om\d_r \d_t\phi-u\cdot \frac{\om\times\Om}{r}\d_t\phi,           \]
\[  \d_t u\cdot\om \d_r\phi=\d_t u\cdot \om\d_r \phi-\d_t u\cdot \frac{\om\times\Om}{r}\phi.         \]
These together with the decays estimates of good unknown and good quantities $u\cdot\om$, $\d_t u\cdot\om$ benefit a decay of $\<t\>^{-3/2}$ in our setting.

\medskip
\emph{2. Derivative loss and energy norms.}
To deal with the derivative loss problem, we have to analyze the strong coupling structure of system  carefully. In fact, the angles $\phi$ evolve on the background manifold $(\R^3,g)$, then from geometric point,  we should define the energy norms intrinsically that match the metric. Inspired by the observation, the energy norms of $\phi$ are given by
\[   \int_{\R^3} (|\d_t \tilde\phi|^2+g^{ij}\d_i \tilde\phi \d_j\tilde\phi) \sqrt{\det g} \ dX=\int_{\R^3} (|D_t \phi|^2+|\nab \phi|^2) \ dx,       \]
where $X=(X_1,X_2,X_3)$ is the Lagrangian coordinates and $\tilde\phi(X,t)=\phi(x(X,t),t)$.
The energy solves the main part of issues cased by the derivative loss. To deal with the remainder issues, we shall further make energy corrections that equivalent to the above energy. In conclusion, a key rule is that we should work with a robust energy
which is both coercive and propagates well along the flow.

\medskip
\emph{3. Nonlinear structure}. The LCEs system \eqref{EL-elas-1} admits good quantities $\om\cdot u$ and $\om_k H_{kj}$, which allow us to gain more decays in energy estimates. Precisely, the quadratic terms satisfy the following forms
\[  \d_k U H_{kj},\qquad u\cdot\nab U,    \]
where $U\in \{u,\ H, \nab\phi\}$ denotes the unknowns. Inspired by $\nab=\om\d_r-\frac{\om\times\Om}{r}$, we have the decomposition
\[ \d_k U H_{kj}= \d_r U \om_k H_{kj}-\frac{(\om\times \Om)_k}{r}U H_{kj}.   \]
The first term in $L^2$ is shown to have $\<t\>^{-3/2}$ due to divergence free, and the second term admits a better decay. We note that the other form $u\cdot\nab U$ can be handled similarly. Hence, we can bound the above quadratic terms well.

\subsection{An outline of the paper}
The structure of the rest of this paper is as follows. In Section \ref{sec-variation}, we derive the hydrodynamics of liquid crystal elastomers \eqref{PHLC} by the energetic variational approach. In Section \ref{sec-3} we reformulate the system \eqref{ori_sys} by introducing new variables. Then we introduce the vector fields as well as define the Klainerman energy norms and weighted $L^2$ norms.  As preparations, in Section \ref{sec-dec} we show some weighted estimates and decay estimates.
Then, in Section \ref{sec-L2}, we  control the  higher and lower order  weighted $L^2$ norms by $E_N$ and $E_{N-2}$.

Section \ref{sec-HighEnergy} and \ref{sec-LowEnergy} are devoted to the proof of the  high-order energy estimate and low-order energy estimate, respectively. Here we shall use the  estimates established in  above sections and the cancellation structures of system.
Finally, in Section \ref{sec-2.3}, we prove the main Theorem \ref{Ori_thm} by  Proposition \ref{Prop_va} and \ref{LE-prop}. And hence the main Theorem \ref{Ori_thm0} is obtained by recovering map $d$ from angles $\phi$.

\bigskip
\section{Energetic variational approach}\label{sec-variation}
In this section, we derive the liquid crystal elastomers model \eqref{PHLC} by energic variational approach.

The energetic variational treatment of complex fluids starts with the energy dissipative law for the whole coupled system
\[  \frac{d}{dt}E^{tot}=-2\mathcal D,  \]
where $E^{tot}=E^{kinetic}+E^{int}$ is the total energy consisting of the kinetic energy and free energy. Here $\mathcal D$ is the dissipation function which is equal to the entropy production of the system in isothermal situations.
The inertial and conservative force from the kinetic and free
energies are, respectively, defined as
\begin{align}   \label{force-iner}
	\de \int_0^T E^{kinetic} dt=\int_0^T \< {\rm force}_{inertial},\de x  \>_{L^2_x} dt,\\\label{force-cons}
	\de \int_0^T E^{int} dt=\int_0^T \< {\rm force}_{conservative},\de x  \>_{L^2_x} dt,
\end{align}
Through ``Maximum Dissipation Principle", the dissipative force (linear with respect to the same rate function) can be derived as follows
\begin{align}   \label{force-dissi}
	\de D=\<{\rm force}_{dissipative},\de x_t\>.
\end{align}
When all forces are derived, the system follows according to the force balance
\begin{align}   \label{force-balance}
	{\rm force}_{inertial}={\rm force}_{conservative}+{\rm force}_{dissipative}.
\end{align}
For the above process in detail, we can also refer to Giga-Kirshtein-Liu \cite[Section 2.1]{GKL-2018}. The method is then used to derive the hydrodynamic of liquid crystal elastomers \eqref{PHLC}.

\medskip
In our system, we have the following dissipative law:
\begin{prop}
	If $(u,F,d)$ is a smooth solution to the system \eqref{PHLC}-\eqref{constraints1}, then the flow satisfies the following relation:
	\begin{equation}\label{DL-2.1}
		\begin{aligned}
			&\frac{1}{2}\frac{d}{dt}\Big( \|u\|_{L^2}^2+\|F\|_{L^2}^2+\rho_1\|\dot{ d}\|_{L^2}^2+\|\nab d\|_{L^2}^2  \Big)\\
			&=-\nu_1\|d^TAd\|_{L^2}^2-\frac{\nu_4}{2}\|\nab u\|_{L^2}^2
			+\la_1\left\Vert \dot{d}+Bd+\frac{\la_2}{\la_1}Ad\right\Vert_{L^2}^2-\Big(\nu_5+\nu_6+\frac{\la_2^2}{\la_1}\Big)\|Ad\|_{L^2}^2.
		\end{aligned}
	\end{equation}
\end{prop}
\begin{proof}
     From the system \eqref{PHLC}, we obtain
     \begin{align*}
     	&\frac{1}{2}\frac{d}{dt}\Big( \|u\|_{L^2}^2+\|F\|_{L^2}^2+\rho_1\|\dot{d}\|_{L^2}^2+\|\nab d\|_{L^2}^2  \Big)\\
     	&=\<u,\d_j (F_{\cdot k}F_{jk})-\d_j(\d_j d\cdot \nab d)+\div \sigma\>+\<F_{ik},\d_j u_i F_{jk}\>\\
     	&\quad +\<u\cdot\nab d,\De d \>+\<\dot{d},\la_1(\dot{d}+Bd)+\la_2 Ad\>,
     \end{align*}
 where we have used the constraints \eqref{constraints1} and $\dot{d}\cdot d=0$. By $\div u=0$, we compute
 \begin{align*}
 	&\<u,\d_j (F_{\cdot k}F_{jk})-\d_j(\d_j d\cdot \nab d)\>+\<F_{ik},\d_j u_i F_{jk}\>+\<u\cdot\nab d,\De d \>\\
 	&=-\<\d_j u_i,F_{ik}F_{jk}\>-\<u\cdot \nab d,\De d \>-\<u,\frac{1}{2}\nab|\nab d|^2\>+\<F_{ik},\d_j u_i F_{jk}\>+\<u\cdot\nab d,\De d \>\\
 	&=0.
 \end{align*}
Hence,
\begin{align*}
	\frac{1}{2}\frac{d}{dt}\Big( \|u\|_{L^2}^2+\|F\|_{L^2}^2+\rho_1\|\dot{d}\|_{L^2}^2+\|\nab d\|_{L^2}^2  \Big)
	=\<u,\div \sigma\>
	+\<\dot{d},\la_1(\dot{d}+Bd)+\la_2 Ad\>.
\end{align*}

Now we compute the terms on the right-hand side of the above equality, respectively. Precisely, by the expression of $\sigma$ in \eqref{Extra-Sress-sigma}, we calculate
\begin{align*}
	\<u_i,\d_j( \nu_1 d_k A_{kp}d_p  d_i d_j)\>&=-\nu_1\<d_k A_{kp}d_p  d_i d_j,\d_j u_i\>\\
	&= -\nu_1\<d_k A_{kp}d_p  d_i d_j,A_{ij}+B_{ij}\>=-\nu_1 \|d^T Ad\|_{L^2}^2,
\end{align*}
\begin{align*}
	&\<u_i,\d_j ( \nu_2  d_j N_i  + \nu_3 d_i N_j )\>\\
	&= -\<\d_j u_i, \nu_2  d_j (\dot{d}_i +B_{ki}d_k)  + \nu_3 d_i (\dot{d}_j+B_{kj}d_k) \>\\
	&=-\<A_{ij}+B_{ij}, (\nu_2-\nu_3)(d_j\dot{d}_i+d_j B_{ki}d_k)+\nu_3(d_j\dot{d}_i+d_i\dot{d}_j+d_j B_{ki}d_k+d_i B_{kj}d_k)\>\\
	&=-\la_1\<A_{ij},d_j\dot{d}_i+d_j B_{ki}d_k\>+\la_1\<B_{ji},d_j\dot{d}_i+d_j B_{ki}d_k\> -2\nu_3\<A_{ij},d_j\dot{d}_i+d_jB_{ki}d_k\>\\
	&=-(\la_1+2\nu_3)\<Ad,\dot{d}+Bd\>+\la_1\<Bd,\dot{d}+Bd\>\\
	&=\la_2\<Ad,\dot{d}+Bd\>+\la_1\<Bd,\dot{d}+Bd\>,
\end{align*}
and
\begin{align*}
	&\<u_i,\d_j(\nu_5 A_{ik}d_k d_j   + \nu_6 d_i A_{jk}d_k)\>\\
	&=-\<A_{ij}+B_{ij},(\nu_5-\nu_6) A_{ik}d_k d_j   + \nu_6 (A_{ik}d_k d_j+d_i A_{jk}d_k)\>\\
	&=-\la_2 \|Ad\|_{L^2}^2+\la_2\<Bd,Ad\>-2\nu_6\|Ad\|_{L^2}^2\\
	&=-(\nu_5+\nu_6)\|Ad\|_{L^2}^2+\la_2\<Bd,Ad\>.
\end{align*}
Collecting the above calculations, we obtain
\begin{align*}
	&\<u,\div \sigma\>
	+\<\dot{d},\la_1(\dot{d}+Bd)+\la_2 Ad\>\\
	&=-\nu_1 \|d^T Ad\|_{L^2}^2-\frac{\nu_4}{2}\|\nab u\|_{L^2}^2+\la_1\|\dot{d}+Bd\|_{L^2}+2\la_2\<Ad,\dot{d}+Bd\>-(\nu_5+\nu_6)\|Ad\|_{L^2}^2\\
	&=-\nu_1 \|d^T Ad\|_{L^2}^2-\frac{\nu_4}{2}\|\nab u\|_{L^2}^2+\la_1\left\Vert\dot{d}+Bd+\frac{\la_2}{\la_1}Ad\right\Vert_{L^2}-\Big(\nu_5+\nu_6+\frac{\la_2^2}{\la_1}\Big)\|Ad\|_{L^2}^2.
\end{align*}
Hence, the energy dissipative law \eqref{DL-2.1} follows.
\end{proof}

With the above energy dissipative law, the kinetic energy $E^{kinetic}$ and internal energy $E^{int}$ are given by
\begin{align}   \label{Ek-Ei}
	E^{kinetic}=\frac{1}{2}\|u\|_{L^2}^2+\frac{1}{2}\rho_1\|D_t  d\|_{L^2}^2,\qquad
	E^{int}=\frac{1}{2}\|F\|_{L^2}^2+\frac{1}{2}\|\nab d\|_{L^2}^2,
\end{align}
and the dissipation functional $\mathcal D$ is
\begin{align*}
	2\mathcal D=\nu_1\|d^T Ad\|_{L^2}^2+\frac{\nu_4}{2}\|\nab u\|_{L^2}^2-\la_1\left\Vert \dot{d}+ Bd +\frac{\la_2}{\la_1}Ad\right\Vert_{L^2}^2+\Big(\nu_5+\nu_6+\frac{\la_2^2}{\la_1}\Big)\|Ad\|_{L^2}^2.
\end{align*}
Next, we will use the variational method to derive the LCEs \eqref{PHLC} in several steps.

\medskip
At the beginning, we introduce some notations in order to rewrite the energies and dissipation functional in the Lagrangian coordinates.
Let $x(X,t)$ be the flow map with divergence free $\nab_x\!\cdot\! u=\nab_x \!\cdot\! \d_t x=0$. Let the map $\td(X,t):\R^n\times \R^+\rightarrow (\MM,h)\hookrightarrow \R^K$ be orientation field and $\tF(X,t)_{ij}=\frac{\d x_i}{\d X_j}$ be deformation gradient. We also denote $d(x,t)=\td(X(x,t),t)$ and $F(x(X,t),t)=\tF(X,t)$ in the Eulerian coordinates. The wave map equations can be written
on a manifold $(\R^n,g)$, where the metric
\begin{equation}
g_{ij}=\d_{X_i}x\cdot \d_{X_j}x
\end{equation}
are determined by the fluid equation of state.

From \eqref{Ek-Ei} and the above notations, we have the following action functional, in terms of the flow map $x(X,t)$ and the map $\td(X,t)$, in the Lagrangian coordinates
\begin{align*}
	\mathcal A&=\int_0^T (E^{kinetic}-E^{int}) \ dt\\
	&=\frac{1}{2}\int \big( |\d_t x|^2-|\nab x|^2 \big)\det \tF\  dXdt+\frac{1}{2}\int \big(\rho_1|\d_t \td|^2 -g^{ij}\d_i \td \d_j \td \big) \det \tF\ dX dt.
\end{align*}
The $A_{ij}$ and $B_{ij}$ are expressed as
\begin{align*}
	&\tilde A_{ij}(X,t)=\frac{1}{2}\Big(\frac{\d X_m}{\d x_i}\d_{X_m} \de \d_t x_j+\frac{\d X_n}{\d x_j}\d_{X_n} \de \d_t x_i\Big) ,\\
	&\tilde B_{ij}(X,t)=\frac{1}{2}\Big(\frac{\d X_m}{\d x_j}\d_{X_m} \de \d_t x_i-\frac{\d X_n}{\d x_i}\d_{X_n} \de \d_t x_j\Big).
\end{align*}

Denote
\[\frac{d}{d\ep}\Big|_{\ep=0}x^\ep=\de x,\quad  \frac{d}{d\ep}\Big|_{\ep=0}\td^\ep=\de \td \in T{\MM}.\]
Then we have
\begin{align*}
	\frac{d}{d\ep}\Big|_{\ep=0}\det \tF= \tr(\tF^{-1}\nab_X \de x)\det \tF,
\end{align*}
and
\begin{align}     \label{dg}
	\frac{d}{d\ep}\Big|_{\ep=0} g_{ij}=\d_{X_i} \de x\cdot \d_{X_j} x+\d_{X_i} x\cdot \d_{X_j} \de x.
\end{align}
From $g^{ij}g_{jk}=\de^i_k$, the formula \eqref{dg} also gives
\begin{align*}
	\frac{d}{d\ep}\bigg|_{\ep=0} g^{ij} =-g^{ik}g^{jl}\frac{d}{d\ep}\bigg|_{\ep=0} g_{kl}=-g^{ik}g^{jl}( \d_k \de x\cdot \d_l x+\d_k x\cdot \d_l \de x).
\end{align*}

\emph{Step 1. Derive the $F$-equation in \eqref{PHLC}}. By the chain rule, we deduce
\begin{align*}
	&\d_t F_{ij}+u\cdot \nab F_{ij}=\d_t (F(x(X,t),t)_{ij})=\d_t \tF_{ij}(X,t)=\d_t\frac{\d x_i}{\d X_j}\\
	&=\frac{\d}{\d X_j} u_i=\frac{\d x_k}{\d X_j}\frac{\d}{\d x_k} u_i=\d_k u_i F_{kj}=(\nab u F)_{ij}.
\end{align*}
That is
\begin{align}   \label{F}
	\d_t F+u\cdot \nab F=\nab u F,
\end{align}
as a byproduct, which combined with $\div u=0$ also implies that
\begin{align}       \label{tr}
	\tr (F^{-1}D_t F)=F^{ij} D_t F_{ji}=F^{ij} \d_ku_j F_{ki}=\de_k^j \d_k u_j=\d_j u_j=0.
\end{align}

\bigskip
\emph{Step 2. Derive the following forces with repect to $\de x$ and $\de d$}:
\begin{align}  \label{u-eq}
	&({\rm force}_{inertial}-{\rm force}_{conservative})_{\de x}=-\big(D_t u+\nab p-\nab\cdot (FF^T)+\div (\nab d\odot \nab d)\big),\\    \label{d}
	&({\rm force}_{inertial}-{\rm force}_{conservative})_{\de d}=-\rho_1 D^2_t d +\De d+(-\rho_1|D_t d|^2+|\nab d|^2)d.
\end{align}

Applying the operator $\frac{d}{d\ep}\big|_{\ep=0}$ to $\AA$.
For the first integral in $\AA$, we arrive at
\begin{align}   \label{FI-AA}
	&\frac{1}{2}\frac{d}{d\ep}\bigg|_{\ep=0}\int ( |\d_t x|^2-|\nab x|^2) \det \tF\ dXdt\\\nonumber
	&=\int  (\d_t x \d_t\de x-\nab x \nab \de x) \det \tF+\frac{1}{2}( |\d_t x|^2-|\nab x|^2) \tr(\tF^{-1}\nab_X\de x) \det \tF\ dXdt\\\nonumber
	&=\int  -(\d^2_t x-\De x ) \de x \det \tF-\big(\d_t x\tr(\tF^{-1}\d_t \tF)-\d_j x \tr(\tF^{-1}\d_j \tF)\big)\de x\det \tF\ dXdt\\\nonumber
	&\quad +\int \frac{1}{2} (|u|^2-|F|^2) \nab_x\cdot \de x\  dxdt\\\nonumber
	&=\int  -(D_t u_i-\d_l F_{ij} F_{lj} ) \de x_i -\big(u\tr(F^{-1}D_t F)-F_{ij} F^{kl}\d_j F_{lk}\big)\de x_i\ dxdt\\\nonumber
	&\quad +\int \frac{1}{2} (|u|^2-|F|^2) \nab\cdot \de x\  dxdt.
\end{align}
Thanks to \eqref{tr} and
\begin{align*}
	-F_{ij}F^{kl}\d_j F_{lk}=-F_{ij} \frac{\d X_k}{\d x_l}\d_{X_k}F_{lj}=-F_{ij}\d_{x_l}F_{lj},
\end{align*}
then we infer
\begin{align*}
	\eqref{FI-AA}&=-\int \Big((D_t u -\d_{x_l}(F_{\cdot j}F_{lj}))+\frac{1}{2}\nab  (|u|^2-|F|^2) \Big) \de x\ dxdt.
\end{align*}

For the second integral in $\AA$, we have
\begin{align*}
	&\frac{1}{2}\frac{d}{d\ep}\bigg|_{\ep=0}\int ( \rho_1|\d_t \td|^2 -g^{ij}\d_i \td \d_j \td ) \sqrt{\det g}\ dXdt\\
	&= \int \big( \rho_1\d_t \td  \d_t \de \td-\frac{1}{2}\de g^{ij} \d_i \td \d_j \td-g^{ij}\d_i \td \d_j \de \td \big)\sqrt{\det g}\\
	&\quad +\frac{1}{2} \big( \rho_1|\d_t \td|^2 -g^{ij}\d_i \td \d_j \td \big) \det \tF \tr(\tF^{-1}\nab \de x)\ dXdt\\
	&=-\int \Big( \rho_1\d^2_t \td  +\d_t \td \tr (\tF^{-1}\d_t \tF) -\frac{1}{\sqrt{\det g}}\d_j (\sqrt{\det g} g^{ij}\d_i \td)\Big) \de \td \sqrt{\det g}\ dXdt\\
	&\quad -\int \frac{1}{2}\de g^{ij} \d_i \td \d_j \td \det \tF+\frac{1}{2} ( -\rho_1|\d_t \td|^2 +g^{ij}\d_i \td \d_j \td ) \det \tF \tr(\tF^{-1}\nab \de x)\ dXdt\\
	:&=I+II.
\end{align*}
By the change of variables and \eqref{tr}, $I$ is written as
\begin{align*}
	I&=-\int \big(\rho_1 D_t^2 d+D_t d \tr (F^{-1}D_t F)-\De d\big) \de d \ dxdt
	=-\int (\rho_1 D_t^2 d-\De d) \de d \ dxdt,
\end{align*}
and $II$ is written as
\begin{align*}
	&-\int \frac{1}{2}\de g^{ij}\d_i \td\d_j \td \det \tF+\frac{1}{2} \big( -\rho_1|\d_t \td|^2 +g^{ij}\d_i \td \d_j \td \big) \det \tF \tr(\tF^{-1}\nab \de x)\ dXdt\\
	&=\int g^{ik}g^{jl}( \d_k \de x\cdot \d_l x)\d_i \td\d_j \td \det \tF-\frac{1}{2} \big( -\rho_1|\d_t \td|^2 +g^{ij}\d_i \td \d_j \td \big) \det \tF \tr(\tF^{-1}\nab \de x)\ dXdt\\
	&=\int \d_{x_\mu} d\nab_x d \d_{x_\mu}\de x  -\frac{1}{2} ( -\rho_1|D_t d|^2 +|\nab d|^2 )  \nab_x \de x \  dx dt\\
	&=\int -\d_{x_\mu}(\d_{x_\mu} d\nab_x d )\de x   +\frac{1}{2}\nab_x ( -\rho_1|D_t d|^2 +|\nab d|^2 )  \de x \  dx dt.
\end{align*}
Then we obtain
\begin{align*}
	&\frac{1}{2}\frac{d}{d\ep}\bigg|_{\ep=0}\int \big( \rho_1|\d_t \td|^2 -g^{ij}\d_i \td \d_j \td \big) \sqrt{\det g}\ dXdt\\
	&=-\int (\rho_1 D_t^2 d-\De d) \de d \ dxdt-\int \d_{x_\mu}(\d_{x_\mu} d\nab_x d )\de x   -\frac{1}{2}\nab_x ( -\rho_1|D_t d|^2 +|\nab d|^2 )  \de x \  dx dt.
\end{align*}

Hence,
\begin{equation}\label{dA}
	\begin{aligned}
		\frac{d}{d\ep}\bigg|_{\ep=0}\AA=&\ -\int \Big(D_t u -\nab\cdot(FF^T)+\frac{1}{2}\nab  (|u|^2-|F|^2+|D_t d|^2-|\nab d|^2)\\
		&+\d_j(\nab d \d_j d) \Big) \de x\ dxdt
		-\int (\rho_1 D_t^2 d-\De d) \de d \ dxdt.
	\end{aligned}
\end{equation}
From the definitions of ``forces" in \eqref{force-iner} and \eqref{force-cons}, this yields the relation in term of $d$,
\begin{align}   \label{d-general}
	({\rm force}_{inertial}-{\rm force}_{conservative})_{\de d}=-\rho_1D^2_t d +\De d+S_{bc}(d)(-\rho_1D_t d^b D_t d^c+\nab d^b \nab d^c),
\end{align}
where $d(x)\in (\MM,h)\hookrightarrow \R^K$ and $S$ is the second
fundamental form of $\MM$, viewed as a symmetric bilinear form $S: T\MM\times T\MM\rightarrow N\MM$.
Particularly, when the target manifold is a sphere $\S^2$, the formula \eqref{d-general} is reduced to \eqref{d}.
At the same time, under the constriant $\div u=0$, the formula \eqref{dA} combined with the standard method in \cite[P499, Theorem 6]{Evans} implies the existence of a scalar function $\tp$ such that
\begin{align*}
	({\rm force}_{inertial}-{\rm force}_{conservative})_{\de x}=-\big(D_t u+\nab p-\nab\cdot (FF^T)+\div (\nab d\odot \nab d)\big).
\end{align*}
Thus the formula \eqref{u-eq} is also obtained.

\bigskip
\emph{Step 3. Derive the ``$\ {\rm force}_{dissipative}$" with respect to $\de \d_tx$ and $\de \d_t \tilde d$:}
\begin{align}   \label{Var-v}
	&({\rm force}_{dissipative})_{\de \d_t x}=\nab p-\nab\cdot\si,\\ \label{Var-d}
	&({\rm force}_{dissipative})_{\de \d_t\tilde d}=-\la_1 (D_t d+Bd)-\la_2 Ad.
\end{align}

According to the maximum dissipation principle, we take $\de_{\d_t x}\mathcal D$ with incompressibility of the fluid $\nab\cdot u=0$.
\begin{align}  \label{de-la1}
	&\de_{\d_t x}\Big(-\frac{1}{2}\la_1\left\Vert \d_t \tilde d+ \tilde B\tilde d +\frac{\la_2}{\la_1}\tilde A\tilde d\right\Vert_{L^2}^2\Big)\\\nonumber
	&=-\la_1\int \Big( \d_t \tilde d_j+ (\tilde B\tilde d)_j +\frac{\la_2}{\la_1}(\tilde A\tilde d)_j\Big) \Big( \frac{1}{2}\big(\frac{\d X_m}{\d x_j}\d_{X_m} \de \d_t x_k-\frac{\d X_n}{\d x_k}\d_{X_n} \de \d_t x_j\big) \tilde d_k \\\nonumber
	&\quad +\frac{\la_2}{\la_1}\frac{1}{2}\big(\frac{\d X_m}{\d x_k}\d_{X_m} \de \d_t x_j+\frac{\d X_n}{\d x_j}\d_{X_n} \de \d_t x_k\big) \tilde d_k   \Big)\ dX\\\nonumber
	&=-\la_1\int \Big( \d_t \tilde d_j+ (\tilde B\tilde d)_j +\frac{\la_2}{\la_1}(\tilde A\tilde d)_j\Big) \Big( \frac{1}{2}(1+\frac{\la_2}{\la_1})\frac{\d X_m}{\d x_j}\d_{X_m} \de \d_t x_k d_k\\\nonumber
	&\quad -\frac{1}{2}(1-\frac{\la_2}{\la_1})\frac{\d X_n}{\d x_k}\d_{X_n} \de \d_t x_j d_k\Big) \ dX.
\end{align}
Again, in the Eulerian coordinates, the above integral is expressed as
\begin{align*}
	\eqref{de-la1}&=-\la_1\int \Big( \dot{d}_j+ ( B d)_j +\frac{\la_2}{\la_1}( A d)_j\Big) \Big( \frac{1}{2}(1+\frac{\la_2}{\la_1})\d_j(\de \d_t x)_k  d_k\\
	&\quad -\frac{1}{2}(1-\frac{\la_2}{\la_1})\d_k (\de \d_t x)_j d_k\Big) \ dx\\
	&=\la_1\int \frac{1}{2}(1+\frac{\la_2} {\la_1})\d_j\Big( N_j d_k +\frac{\la_2}{\la_1}(Ad)_jd_k\Big)(\de \d_t x)_k \\
	&\quad -\frac{1}{2}(1-\frac{\la_2}{\la_1})\d_j\Big( N_k d_j +\frac{\la_2}{\la_1}(Ad)_kd_j\Big) (\de \d_t x)_k  \ dx\\
	&=\int \Big(-\nu_2\d_j(d_j N_k)-\nu_3\d_j(d_kN_j)+\frac{1}{2}(\la_2+\frac{\la_2^2}{\la_1})\d_j (A_{jm}d_md_k)\\
	&\quad -\frac{1}{2}(\la_2-\frac{\la_2^2}{\la_1})\d_j(A_{km}d_md_j)\Big) (\de \d_t x)_k  \ dx.
\end{align*}
For the other terms, we also have
\begin{align*}
	&\de_{\d_t x}\Big(\frac{1}{2}\nu_1\|d^T Ad\|_{L^2}^2\Big)=\de_{u}\Big(\frac{1}{2}\nu_1\|d^T Ad\|_{L^2}^2\Big)\\
	&=\nu_1\int d_k A_{kp}d_pd_i \frac{1}{2}(\d_i \de u_j+\d_j \de u_i)d_j\ dx\\
	&= -\nu_1\int \d_j(d_k A_{kp}d_pd_id_j) \de u_i\ dx,
\end{align*}
\begin{align*}
	&\de_{\d_t x}\Big(\frac{\nu_4}{4}\|\nab u\|_{L^2}^2\Big)=\de_{u}\Big(\frac{\nu_4}{4}\|\nab u\|_{L^2}^2\Big)\\
	&=\int \frac{\nu_4}{2} \nab u \nab \de  u dx=-\int \frac{\nu_4}{2} \De u \ \de  u dx=-\int \nu_4 \d_j A_{ij}  \ \de  u_i dx,
\end{align*}
and
\begin{align*}
	&\de_{\d_t x}(\frac{1}{2}\Big(\nu_5+\nu_6+\frac{\la_2^2}{\la_1}\Big)\|Ad\|_{L^2}^2)=\de_{u}(\frac{1}{2}\Big(\nu_5+\nu_6+\frac{\la_2^2}{\la_1}\Big)\|Ad\|_{L^2}^2)\\
	&=\Big(\nu_5+\nu_6+\frac{\la_2^2}{\la_1}\Big)\int A_{ki}d_k \frac{1}{2} (\d_j \de u_i+\d_i \de u_j) d_j \  dx\\
	&=- \Big(\nu_5+\nu_6+\frac{\la_2^2}{\la_1}\Big)\int \frac{1}{2}\d_j(A_{ki}d_k d_j)   \de u_i+\frac{1}{2}\d_j(A_{kj}d_k d_i)  \de u_i \  dx.
\end{align*}
In view of the relation $\la_2=\nu_5-\nu_6$, collecting the above calculations yield
\begin{align*}
	\de_{\d_t x}\DD = \int -\div \si\cdot \de u\ dx.
\end{align*}
And hence the dissipative force \eqref{Var-v} follows under the constraint $\div \de u=0$.

Similarly, since the third term in $\mathcal D$ related to $\d_t \tilde d$, we take $\de_{\d_t \tilde d}\mathcal D$ to deduce
\begin{align*}
	\de_{\d_t \tilde d}\mathcal D&=\de_{\d_t \tilde d}\Big(-\frac{1}{2}\la_1\left\Vert \d_t \tilde d+ \tilde B\tilde d +\frac{\la_2}{\la_1}\tilde A\tilde d\right\Vert_{L^2}^2\Big)\\
	&=-\la_1\int \Big(\d_t \tilde d+ \tilde B\tilde d +\frac{\la_2}{\la_1}\tilde A\tilde d\Big) \de \d_t \tilde d\ dX\\
	&=-\int (\la_1 (\d_t \tilde d+ \tilde B\tilde d) +\la_2\tilde A\tilde d)\ \de \d_t \tilde d\ dX,
\end{align*}
which yields the formula \eqref{Var-d} in Eulerian coordinates.

According to the force balance \eqref{force-balance}, from \eqref{u-eq} and \eqref{Var-v} we obtain the $u$-equations in \eqref{PHLC}, by \eqref{d} and \eqref{Var-d} we also get the $d$-equations in \eqref{PHLC}.
Therefore, the system \eqref{PHLC} is obtained.

\bigskip
\section{Reformulation and function spaces}\label{sec-3}
In this section, we reformulate the system \eqref{ori_sys}-\eqref{constraints1-re} by introducing some new variables. Here we also introduce the vector fields, which are used to define the weighted energy norms and $L^2$ weighted norms.
\subsection{Derivation of \eqref{EL-elas-1}}
Here we derive the system \eqref{EL-elas-1} from \eqref{ori_sys}. Recall the notation
\begin{equation*}
	H=F-I,
\end{equation*}
 the equation of $F$ is written as
\begin{equation}   \label{H}
	\d_t H-\nab u =-u\cdot \nab H+\nab u H,
\end{equation}
with constraints
\begin{align}    \label{constriant1-re}
	\d_j H_{ik}-\d_k H_{ij}=H_{mk}\d_m H_{ij}-H_{mj} \d_m H_{ik},\quad  \nab\cdot H^T=0.
\end{align}
The quadratic term $\nab\cdot (FF^{\top})$ is rewritten as
\begin{align*}
	\nab\cdot(FF^T)&=\nab\cdot[(I+H)(I+H)^T]=\nab\cdot H+\nab\cdot H^T+\nab\cdot(HH^T)\\
	&=\nab\cdot H+\nab\cdot(HH^T).
\end{align*}
Then the equation of $u$ is given by
\begin{equation}   \label{u}
	\d_t u-\nab\cdot H+\nab p=-u\cdot \nab u+\nab\cdot(HH^T)-\sum_{j=1}^3 \d_j  (\nab d\cdot \d_j d).
\end{equation}

In view of the expression
\begin{align*}
	d=(\cos \phi_1\cos\phi_2,\sin\phi_1\cos\phi_2,\sin\phi_2),
\end{align*}
  we know that the angles $\phi_1$, $\phi_2$ are near $0$, since the orientation $d$ is near $i$. It follows from  the third component $d_3$ that
\begin{align*}
	\cos\phi_2 (D_t^2\phi_2-\De\phi_2)+\sin \phi_2(-|D_t\phi_2|^2+|\nab\phi_2|^2)=&\cos^2 \phi_2(-|D_t\phi_1|^2+|\nab\phi_1|^2)\sin\phi_2\\
	&+(-|D_t\phi_2|^2+|\nab\phi_2|^2)\sin\phi_2,
\end{align*}
 which implies
\begin{align} \label{phi2}
	D_t^2\phi_2-\De\phi_2&=\sin\phi_2\cos \phi_2(-|D_t\phi_1|^2+|\nab\phi_1|^2).
\end{align}
From the first component $d_1$, we have
\begin{align*}
	&-\sin\phi_1\cos\phi_2 (D_t^2\phi_1-\De\phi_1)-\cos\phi_1\sin\phi_2 (D_t^2\phi_2-\De\phi_2)\\
	&-\cos\phi_1\cos\phi_2(|D_t\phi_1|^2-|\nab\phi_1|^2)+2\sin\phi_1\sin\phi_2(D_t\phi_1D_t\phi_2-\nab\phi_1\nab\phi_2)\\
	&=\cos\phi_1 \cos^3\phi_2 (-|D_t\phi_1|^2+|\nab\phi_1|^2).
\end{align*}
This combined with \eqref{phi2} gives
\begin{align*}
	(D_t^2\phi_1-\De\phi_1)
	=2\tan\phi_2(D_t\phi_1D_t\phi_2-\nab\phi_1\nab\phi_2).
\end{align*}
Hence the equation of $d$ can be rewritten as:
\begin{equation}   \label{phi}
	(\d_t+u\cdot \nab)^2\phi-\De \phi=\mathcal R_2,
\end{equation}	
where $\mathcal R_2$ in \eqref{R2} is the high order term.
We can also rearrange the quadratic term in \eqref{u} as
\begin{equation*}
	\sum_j\d_j (\nab d \cdot \d_j d )=\sum_j\d_j(\nab\phi\cdot \d_j\phi)-\mathcal R_1,
\end{equation*}
where $\mathcal R_1$ in \eqref{R1} is also a higher order term.

In conclusion, by \eqref{H}, \eqref{u}, \eqref{phi} and \eqref{constriant1-re}, we obtain the new formulation \eqref{EL-elas-1} with constriants \eqref{cst-re} and \eqref{curl}.

\subsection{Vector fields and function spaces}\label{sec-2.2}
Here we first introduce the vector fields, and then define the energy norms and weighted $L^2$ generalized energies. In the end, we state the main bootstrap proposition.

We defne the perturbed angular momentum operators by
\begin{align*}
	\tOm_i u=\Om_i u +A_i u,\quad \tOm_i H=\Om_i H +[A_i ,H], \quad \tOm\phi=\Om\phi,
\end{align*}
where $\Om = (\Om_1, \Om_2, \Om_3)$ is the rotation vector feld $\Om = x \times \nab $ and $A_i$ is defned by
\begin{equation*}
	A_1=\left(\begin{array}{ccc}
		0&0&0\\
		0&0&1\\
		0&-1&0
	\end{array}\right),\ \
	A_2=\left(\begin{array}{ccc}
		0&0&-1\\
		0&0&0\\
		1&0&0
	\end{array}\right),\ \
	A_3=\left(\begin{array}{ccc}
		0&1&0\\
		-1&0&0\\
		0&0&0
	\end{array}\right),
\end{equation*}
and $[A_i,H]=A_i H-HA_i$ denotes the standard Lie bracket product.
We define the scaling vector-field $S$ by
\begin{equation*}
	S=t\d_t+x_i\d_{x_i},
\end{equation*}
and the perturbed scaling operators as
\begin{equation*}
	\tS u=Su,\quad \tS H=SH,\quad \tS \phi=(S-1)\phi.
\end{equation*}

Let
\begin{equation*}
	Z=(Z_1,\cdots,Z_8)= \{\d_t,\d_1,\d_2,\d_3,\tOm_1,\tOm_2,\tOm_3,\tilde S\}.
\end{equation*}
For any $a=(a_1,\cdots,a_8)\in\Z_+^8$, we denote $Z^{a}=Z_1^{a_1}\cdots Z_8^{a_8}$.
We define the Klainerman?¡¥s generalized energy as
\begin{equation}   \label{def-Ea}
	E_{a}(u,H,\phi;t)=\frac{1}{2}\int |Z^a u|^2+|Z^a H|^2+|D_t Z^a\phi|^2+|\nab_{x} Z^a\phi|^2\ dx,
\end{equation}
where $D_t=\d_t+u\cdot \nab$ is the material derivative.
We also need the weighted $L^2$ generalized energy,
\begin{align*}    
	\XX_{a}(t)=\|\<t-r\>\nab Z^a u\|_{L^2}^2+\|\<t-r\>\nab Z^a H\|_{L^2}^2+ \|\<t-r\> |D^2 Z^a\phi\|_{L^2}^2 .
\end{align*}

However, the above generalized energy $E_a(t)$ does not guarantee the desired energy estimates. To deal with the nonlinear terms, we introduce the  the following modified energy functional
\begin{align*}
	\EE_a(t):&=E_a(t)+\int \frac{1}{2} |Z^a u\cdot\nab\phi|^2+D_t Z^a\phi (Z^au\cdot\nab\phi)\ dx,
\end{align*}
and
\begin{align*}
	\bE_a(t):&=\EE_a(t) -\frac{1}{2} \int \sin^2 \phi_2\big( |D_t Z^a \phi_1|^2+|\nab Z^a \phi_1|^2\big)\ dx\\
	&\quad -\int \frac{1}{2}\sin^2\phi_2|Z^au\cdot \nab \phi_1|^2+\sin^2\phi_2 D_t Z^a\phi_1 Z^a u\cdot\nab\phi_1\ dx.
\end{align*}
The above energy norms capture all of cancellations, and are equivalent to $E_a(t)$ in our setting. In fact,  we shall use the first modified energy $\EE_a$ to derive the lower-order energy estimate \eqref{LE}, and apply the second one $\bE_a$ to obtain the higher-order energy estimate \eqref{Ev}.

For sake of convenience, we denote
\begin{align}   \label{En}
	E_j=\sum_{|a|\leq j}E_a,\qquad \XX_j=\sum_{|a|\leq j}\XX_a,\qquad \EE_j=\sum_{|a|\leq j}\EE_a,\qquad \bE_j=\sum_{|a|\leq j}\bE_a,
\end{align}
and in a function space $U$, for any $j\in \Z^+$, we denote
\begin{align*}
	\|Z^j f\|_{U}=\sum_{|a|\leq j}\|Z^a f\|_{U}.
\end{align*}

In order to characterize the initial data, we introduce the time independent
analogue of $Z$. The only difference will be in the scaling operator. Set
\begin{equation*}
	\Lambda=(\Lambda_1,\cdots,\Lambda_7)=(\d_1,\d_2,\d_3,\tilde \Om_1,\tilde \Om_2,\tilde \Om_3,\tilde S_0),\quad \tilde S_0=\tilde S-t\d_t.
\end{equation*}
Then the commutator of any two $\Lambda$'s is again a $\Lambda$. Define
\begin{equation}   \label{def-HLam}
	H^m_\Lambda=\Big\{ (u,H,\psi_0,\psi_1):\sum_{|a|\leq m}\big(\|\Lambda^a (u,H)\|_{L^2}+\|\nab \Lambda^a \psi_0\|_{L^2} +\| \Lambda^a \psi_1\|_{L^2}\big) <\infty \Big\}.
\end{equation}
We shall solve the liquid crystal elastomers in the space
\begin{align*}
	H^m_Z(T)=\Big\{ (u,H,\phi): Z^a u,Z^a H, \d_tZ^a\phi,\nab Z^a\phi\in L^\infty([0,T]; L^2) ,\ \forall |a|\leq m      \Big\}.
\end{align*}

Applying the vector fields to \eqref{EL-elas-1}, we can derive that
\begin{equation}     \label{EL-VF}
	\left\{
	\begin{aligned}
		&\d_t Z^a u-\nab\cdot Z^a H+\nab Z^a p=f_a,\\
		&\d_t Z^a H-\nab Z^a u = g_a,\\
		&D_t^2Z^a\phi-\Delta Z^a\phi=h_a,
	\end{aligned}
	\right.
\end{equation}
with constraints
\begin{align}  \nonumber
	&\div Z^a u=0,\quad \nab\cdot Z^a H^T=0, \\ \label{curl-vf}
	&\d_j Z^a H_{ik}-\d_k Z^a H_{ij}=\mathcal N_{a,jik},
\end{align}
where the nonlinearities are defined as
\begin{align}
    &\begin{aligned}\label{fa}
    f_a:&=-\sum_{b+c=a}C_a^b Z^b u\cdot \nab Z^c u +\sum_{b+c=a}C_a^b\nab\cdot (Z^b H Z^c H^T)\\
	&\quad - \sum_{b+c=a}C_a^b\d_j  (\nab Z^b\phi\cdot \d_j Z^c\phi)+\RR_{1;a},
    \end{aligned}	\\   \label{ga}
	&g_a:=-\sum_{b+c=a}C_a^b Z^b u\cdot \nab Z^cH+\sum_{b+c=a}C_a^b \nab Z^b u Z^c H,
\end{align}
\begin{align}   \label{ha}
	&\begin{aligned}
		h_a:&= -D_t(Z^au\cdot\nab\phi)-\sum_{b+c=a;b,c\neq a}C_a^b\d_t (Z^b u\cdot\nab Z^c\phi)-\sum_{b+c=a;c\neq a}C_a^bZ^b u\cdot\nab\d_tZ^c\phi\\
		&\quad -\sum_{b+c+e=a;c,e\neq a}C_a^{b,c}Z^bu\cdot\nab(Z^cu\cdot\nab Z^e\phi)+\mathcal R_{2;a},
	\end{aligned}\\   \label{Na}
	&\mathcal N_{a,jik}:=\sum_{b+c=a}\Big(C_a^b Z^b H_{mk}\d_m Z^c H_{ij}- Z^b H_{mj} \d_m  Z^c H_{ik}\Big).
\end{align}
Here the error terms $\RR_{1;a}$ and $\RR_{2;a}$ are given by
\begin{align}   \label{R1a}
	&\RR_{1;a}=\sum_{b+c+e=a}C_{a}^{b,c}\sum_j\d_j(Z^b\sin^2\phi_2 \nab Z^c\phi_1 \d_j Z^e\phi_1),\\
	&\mathcal R_{2;a}=\sum_{b+c+e=a}C_a^{b,c}
	\left(\begin{aligned}
		2Z^b\tan\phi_2(D_t Z^c\phi_1 D_t Z^e\phi_2-\nab Z^c\phi_1\nab Z^e\phi_2)\\
		\frac{1}{2}Z^b\sin 2\phi_2(-D_t Z^c\phi_1 D_t Z^e\phi_1+\nab Z^c\phi_1 \nab Z^e\phi_1)
	\end{aligned}\right).
\end{align}
Note that
\begin{equation*}
	\tilde S (\sin^2\phi_2):= S (\sin^2\phi_2),\quad \tilde S(\tan\phi_2):=(S-1)\tan\phi_2, \quad \tilde S(\sin 2\phi_2):=(S-1)\sin 2\phi_2,
\end{equation*}
and the constant coefficients
\begin{equation*}
	C_a^b:=\frac{a!}{b!(a-b)!},\qquad  C_a^{b,c}:=\frac{a!}{b!c!(a-b-c)!}.
\end{equation*}

\bigskip
\section{Weighted estimates and decay estimates}\label{sec-dec}

In this section, we provide some weighted estimates and decay estimates, which will be frequently used in the later sections.
Here we start with the following weighted $L^{\infty}\mbox{-}L^2$ estimates.
\begin{lemma}
	Let $f\in H^2(\R^3)$, then there hold
	\begin{align}  \label{r1/2u}
		&\<r\>^{1/2}|f(x)|\lesssim \sum_{|\al|\leq 1}\|\nab\Om^{\al}f\|_{L^2},\\\label{ru}
		&\<r\>|f(x)|\lesssim \sum_{|\al|\leq 1}\|\d_r\Om^{\al}f\|_{L^2}^{1/2}\sum_{|\al|\leq 2}\|\Om^{\al}f\|_{L^2}^{1/2}.
	\end{align}
	In particular, let $\om=x/|x|$, assume that $\div g=0$ for a vector $g=(g_1,g_2,g_3)$, then
	\begin{equation}  \label{omf}
		\|r^{3/2}(\om\cdot g)\|_{L^\infty}\lesssim \sum_{|\alpha|\leq 2}\lV \Om^{\alpha}g\rV_{L^2}.
	\end{equation}
\end{lemma}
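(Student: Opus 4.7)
All three estimates are weighted trace-type Sobolev inequalities on spheres, so the common strategy is to combine a one-dimensional fundamental-theorem-of-calculus identity along rays with a Sobolev embedding on the unit sphere $S^2$. Since the rotations $\Omega_i=(x\times\nabla)_i$ are tangent to spheres, they commute with evaluation on a sphere, and the $\Omega^\alpha$ terms on the right-hand sides are exactly what is produced by applying sphere-Sobolev to convert an $L^2(S^2)$ trace bound into a pointwise bound. The starting identity is
\[
|f(r\omega)|^2 = -2\int_r^\infty f(\rho\omega)\,\partial_\rho f(\rho\omega)\,d\rho,
\]
valid for decaying $H^1$ functions.

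For \eqref{r1/2u}, I would integrate the identity in $\omega\in S^2$, then apply Hardy's inequality $\||x|^{-1}f\|_{L^2}\lesssim \|\nabla f\|_{L^2}$ to obtain the $L^2$-trace bound $r\int_{S^2}|f(r\omega)|^2\,d\omega \lesssim \|\nabla f\|_{L^2}^2$. Applying the same trace bound to $\Omega^\alpha f$ (which is controlled by the same Hardy--trace argument since $\Omega$ is tangential) and combining with a Sobolev embedding on $S^2$ at the level needed to pass from $L^2(S^2)$ to $L^\infty(S^2)$ yields \eqref{r1/2u}. For $r\lesssim 1$, the bracket $\<r\>\sim 1$ and the estimate reduces to the usual 3D embedding $H^2\hookrightarrow L^\infty$.

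For \eqref{ru}, the same identity combined with Cauchy--Schwarz and the bound $|y|\geq r\Rightarrow |y|^{-2}\leq r^{-2}$ gives the sharper trace bound $r^2\int_{S^2}|f(r\omega)|^2\,d\omega\lesssim \|f\|_{L^2}\|\partial_r f\|_{L^2}$. Sobolev on $S^2$ and Cauchy--Schwarz on the $\alpha$-sum then distribute this as the product $\|\Omega^\alpha f\|_{L^2}^{1/2}\|\partial_r \Omega^\alpha f\|_{L^2}^{1/2}$, giving \eqref{ru} with the stated counts.

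For \eqref{omf}, the divergence-free hypothesis is essential. In spherical coordinates, $\nabla\cdot g=0$ reads $\partial_r(r^2 g_r) = -r\,\nabla_\omega\cdot g_\omega$, which integrates from the origin (using regularity at $0$) to yield $r^2 g_r(r\omega) = -\int_0^r \rho\,\nabla_\omega\cdot g_\omega(\rho\omega)\,d\rho$. A Cauchy--Schwarz split with the weights $\rho = 1\cdot\rho$ gives the pointwise estimate $r^{3/2}|g_r(r\omega)|\leq \bigl(\int_0^r \rho^2|\nabla_\omega g_\omega(\rho\omega)|^2\,d\rho\bigr)^{1/2}$, and the identification $\rho|\nabla_\omega g|\lesssim |\Omega g|$ converts the weight into angular derivatives. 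Upgrading from pointwise-$\omega$ to $L^\infty(S^2)$ via sphere-Sobolev introduces at most two additional $\Omega$ operators on $g$, and $\Omega^\alpha(\omega\cdot g)$ is controlled by $\sum_{|\beta|\leq |\alpha|}|\Omega^\beta g|$ because $\Omega$ applied to $\omega=x/|x|$ is smooth and bounded on $S^2$. The main obstacle is the bookkeeping of $\Omega$-counts in the $L^2(S^2)\to L^\infty(S^2)$ upgrade, so that the final estimate uses only up to $|\alpha|\leq 2$; the divergence-free structure is precisely what converts an unavailable radial derivative into an angular one and produces the improved weight $r^{3/2}$.
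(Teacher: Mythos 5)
Your sketch of \eqref{r1/2u} and \eqref{ru} is the standard Klainerman--Sideris trace argument, which is fine: the paper simply cites these two bounds from \cite{KS96} and \cite{Si} rather than reproving them. The interesting part is \eqref{omf}, where you take a genuinely different route from the paper, and it is there that your argument has a gap.

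Your strategy is to write $\div g=0$ in spherical coordinates as $\partial_r(r^2 g_r)=-r\,\nabla_\omega\cdot g_\omega$, integrate from the origin, apply Cauchy--Schwarz to get the pointwise-in-$\omega$ bound $r^{3/2}|g_r(r\omega)|\lesssim\bigl(\int_0^\infty\rho^2|\nabla_\omega\cdot g_\omega(\rho\omega)|^2\,d\rho\bigr)^{1/2}$, and then pass from $L^2(S^2)$ to $L^\infty(S^2)$ by sphere-Sobolev. The first two steps are correct. The problem is the last step, which you yourself flag as ``the main obstacle.'' The standard sphere-Sobolev embedding $H^2(S^2)\hookrightarrow L^\infty(S^2)$ costs two angular derivatives, and the right-hand side of your pointwise estimate already carries one angular derivative of $g$ (coming from $\nabla_\omega\cdot g_\omega$). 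If you apply $\Omega^\beta$, $|\beta|\leq 2$, to the function $\omega\mapsto r^{3/2}g_r(r\omega)$ and then repeat the FTC argument (the $\Omega_i$ do commute with $\partial_r$, so the ODE persists), you end up with three angular derivatives on $g$ on the right, i.e.\ $|\alpha|\leq 3$, not the claimed $|\alpha|\leq 2$. You assert that careful bookkeeping closes this, but you do not show how, and a naive count does not.

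The paper sidesteps this entirely. It applies \eqref{r1/2u} to the auxiliary scalar $x\cdot g=r(\omega\cdot g)$, which gives $\|r^{3/2}(\omega\cdot g)\|_{L^\infty}\lesssim\sum_{|a|\leq1}\|\nabla\Omega^a(r\,\omega\cdot g)\|_{L^2}$ directly, with one $\nabla$ and at most one $\Omega$. It then decomposes $\nabla=\omega\partial_r-\frac{\omega}{r}\times\Omega$: the angular piece of $\nabla$ contributes a second $\Omega$, and the radial piece $r\,\omega\cdot\partial_r\tilde\Omega^a g$ is converted, via $\div=0$ written as $\omega\cdot\partial_r g=\frac1r(\omega\times\Omega)\cdot g$, into another angular derivative. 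Either way at most two $\Omega$'s land on $g$. The key economy is that the $L^\infty$-embedding work has already been done once and for all inside the precise form of \eqref{r1/2u}, so only one $\Omega$ is spent there; applying \eqref{r1/2u} to the weighted quantity $x\cdot g$ then produces the extra power of $r$ for free. Your approach redoes the $L^\infty(S^2)$ embedding from scratch and thereby pays an extra derivative, unless you invoke something sharper than plain $H^2(S^2)\hookrightarrow L^\infty(S^2)$ --- which, in effect, would be reusing \eqref{r1/2u} in disguise.
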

\begin{proof}
	For \eqref{r1/2u} and \eqref{ru}, one please refers to Lemma 4.2 in \cite{KS96} and Lemma 3.3 in \cite{Si}.
	Here we are aimed at  proving the bound \eqref{omf}.
	From \eqref{r1/2u} and the decomposition
	\begin{equation}    \label{decomposition}
		\nab=\om\d_r-\frac{\om}{r}\times\Om,\qquad \om=x/|x|.
	\end{equation}
    we have
	\begin{align}   \nonumber
		\|r^{3/2}(\om\cdot g)\|_{L^\infty}&\les \sum_{|a|\leq 1 }\|\nab\Om^a (r\om\cdot g)\|_{L^2}\les \sum_{|a|\leq 1 }\|\nab (r\om\cdot \tilde\Om^a g)\|_{L^2}\\\nonumber
		&\les  \sum_{|a|\leq 1}\|\d_r (r\om\cdot \tilde\Om^a g)\|_{L^2}+\sum_{|a|\leq 1}\|\frac{x}{r^2}\times \Om (r\om\cdot \Om^a g)\|_{L^2}\\ \label{omg}
		&\les \sum_{|a|\leq 1}\| r\om\cdot \d_r\tilde\Om^a g\|_{L^2}+\sum_{|a|\leq 2}\| \Om^a g\|_{L^2}.
	\end{align}
Since $\div g=0$, with the help of \eqref{decomposition} again, we derive
\begin{equation*}
	0=\div g=\om\cdot \d_r g-\frac{\om}{r}\times\Om \cdot g=\d_r(\om \cdot g)-\frac{\om}{r}\times\Om \cdot g.
\end{equation*}
The first term in the right hand side of \eqref{omg} can be controlled by
\begin{align*}
	\sum_{|a|\leq 1}\| r\om\cdot \d_r\tilde\Om^a g\|_{L^2}=\sum_{|a|\leq 1}\| \om\times \Om\cdot\tilde\Om^a g\|_{L^2}\les \sum_{|a|\leq 2}\| \tilde \Om^a g\|_{L^2}.
\end{align*}
Thus the bound \eqref{omf} is obtained.
\end{proof}

\begin{lemma} \label{Decay-phi}
	Assume that $(u,H,\phi)$ is the solution of \eqref{EL-elas-1}. Then there hold
	\begin{align}  \label{AwayCone}
		&\<r\>| Z^a u|+\<r\>| Z^a H|+\<r\>|\nab_{t,x} Z^a \phi|\lesssim E^{1/2}_{|a|+2},\\  \label{omuH}
		&\<r\>^{3/2}|\om\cdot Z^a u|+\<r\>^{3/2}|\om\cdot Z^a H^T|\les E^{1/2}_{|a|+2}.
	\end{align}
\end{lemma}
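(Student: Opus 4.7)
The plan is to reduce both inequalities to the weighted Sobolev-type estimates \eqref{ru} and \eqref{omf} proved in the preceding lemma, after absorbing the difference between $\Omega$ and $\tilde\Omega$ into controlled commutator terms.

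First I would prove \eqref{AwayCone} by applying \eqref{ru} pointwise with $f\in\{Z^au,\ Z^aH_{ij},\ \d_t Z^a\phi,\ \d_{x_k} Z^a\phi\}$. Since $\d_r=\omega\cdot\nabla$ is dominated by $\nabla$, and $\nabla\in\{Z_1,\dots,Z_4\}$, the right-hand side of \eqref{ru} is controlled by $\sum_{|\beta|\le 2}\|\Omega^\beta Z^{a'}f\|_{L^2}$ with $|a'|\le|a|+1$. The key step is to replace each bare $\Omega_i$ by its perturbed cousin $\tilde\Omega_i$. For $u$ one has $\Omega_i u=\tilde\Omega_i u-A_iu$, for $H$ one has $\Omega_iH=\tilde\Omega_iH-[A_i,H]$, and for $\phi$ the two operators coincide. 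Since the matrices $A_i$ are constant and bounded, iterating this substitution converts $\Omega^\beta Z^{a'}f$ into a linear combination of $Z^bf$ with $|b|\le|a|+2$, up to commutator errors of the same or lower order. The $L^2$ norms of all these terms are part of $E_{|a|+2}$, yielding \eqref{AwayCone}.

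For \eqref{omuH} I would invoke \eqref{omf}. The two divergence constraints in \eqref{cst-re}, which are preserved under $Z^a$ by the first line of \eqref{curl-vf}, give exactly the divergence-free inputs needed: $\div Z^au=0$, and $\sum_i\d_i Z^aH_{ji}=0$ for each fixed $j$, i.e.\ each row of $Z^aH$ (column of $Z^aH^T$) is divergence free. Applying \eqref{omf} to $g=Z^au$ and, row-by-row, to $g=(Z^aH_{j1},Z^aH_{j2},Z^aH_{j3})$ yields $r^{3/2}|\omega\cdot Z^au|$ and $r^{3/2}|\omega\cdot Z^aH^T|$ bounded by $\sum_{|\beta|\le 2}\|\Omega^\beta g\|_{L^2}$, and the same $\Omega\to\tilde\Omega$ substitution used above converts this into $E^{1/2}_{|a|+2}$. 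Finally, in the compact region $r\le 1$ the weight $\<r\>^{3/2}$ is bounded, so the stronger pointwise bound \eqref{AwayCone} already covers that regime; patching the two regions gives the $\<r\>^{3/2}$ version on all of $\R^3$.

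There is no real obstacle here: the estimates are direct corollaries of the two weighted Sobolev inequalities of the previous lemma. The only bookkeeping issue is tracking the commutators that appear when one passes from the geometric operators $\Omega,\nabla$ to the vector fields $Z$, and checking that divergence freeness is preserved by $Z^a$ so that \eqref{omf} is applicable to $Z^aH$ row by row. Both points are handled by the identities $\tilde\Omega_iu=\Omega_iu+A_iu$, $\tilde\Omega_iH=\Omega_iH+[A_i,H]$ and by the constraints already recorded in \eqref{curl-vf}.
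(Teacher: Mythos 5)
Your proposal is correct and takes essentially the same route as the paper: apply the weighted estimate \eqref{ru} for the first bound, and \eqref{omf} together with the preserved divergence-free constraints for the second, absorbing the $\Omega\leftrightarrow\tilde\Omega$ discrepancy into bounded commutator terms. The paper's own proof is just a one-line citation to \eqref{ru} and \eqref{omf}; you have supplied the bookkeeping details (row-by-row application of \eqref{omf} to $Z^aH^T$, the $A_i$-commutator substitutions) that the paper leaves implicit.
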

\begin{proof}
	The first bound follows from \eqref{ru}. The second one is obtained by $\div Z^au=0$, $\nab\cdot Z^a H^T=0$ and the bound \eqref{omf}.
\end{proof}

Next we state some decay estimates.
\begin{lemma}  \label{lem4.3}
	For all $f\in H^2(\R^3)$, there hold
	\begin{align} \label{decay-1}
		&\<t\>\|f\|_{\lf(r<2\<t\>/3)}\les \|f\|_{L^2}+\|\<t-r\>\nab f\|_{L^2}+\|\<t-r\>\nab^2 f\|_{L^2},\\  \label{decay-2}
		&\<t\>\|f\|_{L^6(r<2\<t\>/3)}\les \|f\|_{L^2}+\|\<t-r\>\nab f\|_{L^2}.
	\end{align}
\end{lemma}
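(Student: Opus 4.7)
The plan is to prove both inequalities by a cutoff-plus-Sobolev argument that converts the weight $\<t-r\>$ into a gain of $\<t\>^{-1}$ inside the estimating region. I would fix once and for all a smooth cutoff $\chi(x,t)$ that equals $1$ on $\{r\le 2\<t\>/3\}$, is supported in $\{r\le 3\<t\>/4\}$, and satisfies $|\nabla^j\chi|\les \<t\>^{-j}$ for $j\le 2$. The point is that on $\supp\chi$ we have $\<t-r\>\simeq \<t\>$, so for any $L^2$ function $g$,
\[
\|\chi g\|_{L^2}\les \<t\>^{-1}\|\<t-r\>g\|_{L^2}.
\]
This is the one structural input that drives both estimates.

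For \eqref{decay-2} I would apply the 3D Sobolev embedding $\dot H^1\hookrightarrow L^6$ to the compactly supported function $\chi f$:
\[
\|f\|_{L^6(r<2\<t\>/3)}\le \|\chi f\|_{L^6}\les \|\nabla(\chi f)\|_{L^2}\les \|(\nabla\chi)f\|_{L^2}+\|\chi\nabla f\|_{L^2},
\]
and then estimate the first term by $\<t\>^{-1}\|f\|_{L^2}$ (using the bound on $\nabla\chi$) and the second by $\<t\>^{-1}\|\<t-r\>\nabla f\|_{L^2}$ (using the weight gain on $\supp\chi$). Multiplying through by $\<t\>$ delivers \eqref{decay-2}.

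For \eqref{decay-1} the strategy is the same in spirit, but one must take two derivatives. I would invoke the Gagliardo--Nirenberg interpolation
\[
\|g\|_{L^\infty(\R^3)}\les \|\nabla g\|_{L^2}^{1/2}\|\nabla^2 g\|_{L^2}^{1/2},
\]
valid for $g\in H^2(\R^3)$ (here $g=\chi f$, which is compactly supported and hence admissible). Leibniz expansion together with the $\supp\chi$ observation yield
\[
\|\nabla(\chi f)\|_{L^2}\les \<t\>^{-1}\bigl(\|f\|_{L^2}+\|\<t-r\>\nabla f\|_{L^2}\bigr),
\]
\[
\|\nabla^2(\chi f)\|_{L^2}\les \<t\>^{-1}\bigl(\|f\|_{L^2}+\|\<t-r\>\nabla f\|_{L^2}+\|\<t-r\>\nabla^2 f\|_{L^2}\bigr),
\]
where each derivative falling on $\chi$ costs a factor $\<t\>^{-j}$ paired with the unweighted $f$, and each derivative on $f$ is handled on $\supp\chi$ by trading $1$ for $\<t-r\>^{-1}\les \<t\>^{-1}$. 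Taking the geometric mean and using $\sqrt{AB}\les A+B$ gives $\|\chi f\|_{L^\infty}\les \<t\>^{-1}$ times the right-hand side of \eqref{decay-1}.

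The only delicate point is ensuring that the $\<t\>$ weight on the left is exactly $\<t\>^1$ rather than $\<t\>^{1/2}$. This works because the $\<t\>^{-1}$ gain is \emph{symmetric} across the two factors $\|\nabla(\chi f)\|_{L^2}^{1/2}$ and $\|\nabla^2(\chi f)\|_{L^2}^{1/2}$, giving $\<t\>^{-1/2}\cdot \<t\>^{-1/2}=\<t\>^{-1}$. The cruder embedding $H^2(\R^3)\hookrightarrow L^\infty$ would not suffice, since the $\|\chi f\|_{L^2}$ contribution cannot be improved beyond $\|f\|_{L^2}$ and so would produce an extra, unwanted $\<t\>$.
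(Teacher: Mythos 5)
Your argument is essentially the paper's: introduce a cutoff supported near $\{r<2\<t\>/3\}$, apply the homogeneous Gagliardo--Nirenberg inequality $\|g\|_{L^\infty}\les\|\nabla g\|_{L^2}^{1/2}\|\nabla^2 g\|_{L^2}^{1/2}$ (resp.\ $\dot H^1\hookrightarrow L^6$) to the cut-off function, expand by Leibniz, and absorb the prefactor $\<t\>$ via $\<t-r\>\gtrsim\<t\>$ on the support of the cutoff. There is one place where you are in fact more careful than the paper: you take your cutoff supported in $\{r\le 3\<t\>/4\}$, strictly inside $\{r<\<t\>\}$, so that $\<t-r\>\gtrsim\<t\>$ holds uniformly on its support. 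The paper instead takes $\varphi$ with $\varphi(s)=0$ only for $s\ge 2$, so $\varphi\bigl(r/(2\<t\>/3)\bigr)$ is supported out to $r\le 4\<t\>/3$, a region that contains the light cone $r=t$, where $\<t-r\>\approx 1$; consequently the paper's displayed step $\<t\>\|\nabla f\|_{L^2(r\le 4\<t\>/3)}\les\|\<t-r\>\nabla f\|_{L^2}$ does not hold as literally written, and is repaired precisely by shrinking the cutoff's support as you did. Your closing remark on why the homogeneous interpolation is needed rather than the crude embedding $H^2\hookrightarrow L^\infty$ is also correct.
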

\begin{proof}
	Let $\varphi\in C_0^\infty$, satisfy $\varphi(s)=1$ for $s\leq 1$, $\varphi(s)=0$ for $s\geq 2$. By the Sobolev embedding theorem
	\begin{equation*}
		\|f\|_{\lf}\les \|\nab f\|_{L^2}^{1/2}\|\nab^2 f\|_{L^2}^{1/2},
	\end{equation*}
we have
\begin{align*}
	\<t\>\big|\varphi(\frac{r}{2\<t\>/3})f\big|
	&\les \<t\>\Big\|\nab(\varphi(\frac{r}{2\<t\>/3})f)\Big\|_{L^2}+\<t\>
    \Big\|\nab^2(\varphi(\frac{r}{2\<t\>/3})f)\Big\|_{L^2}\\
	&\les \|f\|_{L^2}+\<t\>\|\nab f\|_{L^2(r\leq 4\<t\>/3)}+\<t\>\|\nab^2 f\|_{L^2(r\leq 4\<t\>/3)}\\
	&\les \|f\|_{L^2}+\|\<t-r\>\nab f\|_{L^2}+\|\<t-r\>\nab^2 f\|_{L^2}.
\end{align*}

For the second bound \eqref{decay-2}, by the Sobolev embedding theorem, we derive
\begin{align*}
	\<t\>\|f\|_{L^6(r<2\<t\>/3)}&\les \Big\|\<t\>\varphi(\frac{r}{2\<t\>/3})f\Big\|_{L^6(r<2\<t\>/3)}\\
	&\les \Big\|\<t\>\nab(\varphi(\frac{r}{2\<t\>/3})f)\Big\|_{L^2(r<2\<t\>/3)}\\
	&\les \|f\|_{L^2}+\|\<t-r\>\nab f\|_{L^2}.
\end{align*}
\end{proof}

From Lemma \ref{Decay-phi} and \ref{lem4.3}, we have the following decay estimates.
\begin{prop}
	Let $0<\de\leq 1/8$ be as in Theorem \ref{Ori_thm}. Assume that $(u,H,\phi)$ is a solution of \eqref{EL-elas-1}. There hold
	\begin{align}      \label{Decay-phi2}
		&\|Z^a u\|_{\lf}+\| Z^a H\|_{\lf}+\|\nab Z^a \phi\|_{\lf}\les \<t\>^{-1}(E^{1/2}_{|a|+2}+\XX^{1/2}_{|a|+2}),\\  \label{Zaphi}
		&\|Z^a\phi\|_{\lf}\les \<t\>^{-1/3+\de}(E^{1/2}_{|a|+2}+\XX^{1/2}_{|a|+2}).
	\end{align}
\end{prop}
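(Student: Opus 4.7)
The plan is to split the physical space into the exterior region $\{r \geq 2\<t\>/3\}$ and the interior region $\{r < 2\<t\>/3\}$, since the preceding lemmas provide two complementary decay mechanisms: the weighted pointwise bounds of Lemma \ref{Decay-phi} (namely \eqref{AwayCone} and \eqref{omuH}) exploit the spatial weight $\<r\>$ and are effective when $\<r\> \sim \<t\>$, while the Klainerman--Sideris type estimates of Lemma \ref{lem4.3} trade the weight $\<t-r\>$ for the time weight $\<t\>$ and are effective precisely where $r$ is away from the light cone. For the first bound \eqref{Decay-phi2}, I would treat $f \in \{Z^a u,\ Z^a H,\ \nabla Z^a\phi\}$ uniformly. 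On the exterior, \eqref{AwayCone} immediately gives $|f| \les \<r\>^{-1} E^{1/2}_{|a|+2} \les \<t\>^{-1} E^{1/2}_{|a|+2}$. On the interior, invoke \eqref{decay-1} to bound $\<t\>\|f\|_{L^\infty(r<2\<t\>/3)}$ by $\|f\|_{L^2} + \|\<t-r\>\nabla f\|_{L^2} + \|\<t-r\>\nabla^2 f\|_{L^2}$; the first summand is absorbed by $E^{1/2}_{|a|}$, and the weighted ones by $\XX^{1/2}_{|a|+1}$ after using that $\nabla$ is itself among the $Z$-fields (so $\nabla^k Z^a$ reduces, modulo commutators, to a vector field of order $|a|+k$).

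The bound \eqref{Zaphi} is the main obstacle, since the energy $E_a$ controls only $\nabla_{t,x} Z^a\phi$ in $L^2$ and not $Z^a\phi$ itself, so one cannot feed $f = Z^a\phi$ directly into Lemma \ref{lem4.3}. On the exterior, \eqref{r1/2u} still applies to $Z^a\phi$ and yields $|Z^a\phi| \les \<r\>^{-1/2}E^{1/2}_{|a|+1} \les \<t\>^{-1/2}E^{1/2}_{|a|+2}$. On the interior, the plan is to use the three-dimensional Gagliardo--Nirenberg interpolation
\[
\|Z^a\phi\|_{L^\infty(\R^3)} \les \|Z^a\phi\|_{L^6(\R^3)}^{2/3}\,\|\nabla Z^a\phi\|_{L^\infty(\R^3)}^{1/3},
\]
whose exponent $1/3$ is fixed by scaling and accounts for the rate $\<t\>^{-1/3}$. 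The $L^6$ factor is handled by Sobolev embedding, $\|Z^a\phi\|_{L^6}\les \|\nabla Z^a\phi\|_{L^2}\les E^{1/2}_{|a|}$, and the $L^\infty$ factor is supplied by the first half of the proposition, already established, giving $\|\nabla Z^a\phi\|_{L^\infty}\les \<t\>^{-1}(E^{1/2}_{|a|+2}+\XX^{1/2}_{|a|+2})$. Multiplying these, Young's inequality $A^{2/3}B^{1/3}\les A + B$ then linearizes the product and yields $\<t\>^{-1/3}(E^{1/2}_{|a|+2}+\XX^{1/2}_{|a|+2})$; the $\<t\>^{\delta}$ slack in the statement comfortably absorbs any logarithmic loss from cutoff functions needed to restrict the G--N estimate to the interior region.

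The technical core, as flagged above, is the second bound: the missing $L^2$-control on $Z^a\phi$ forces the interpolation route rather than a direct Klainerman--Sideris application, and the matching exterior rate $\<t\>^{-1/2}$ is strictly better, so the overall decay is set by the interior G--N step. The first bound is more routine once one has agreed on how commutators between $\nabla$ and $\tOm,\tS$ are treated, which is standard because the commutator of $\nabla$ with a $Z$-field is again a $Z$-field of the same or lower order.
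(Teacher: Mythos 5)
Your proof of the first bound \eqref{Decay-phi2} matches the paper's: both split at $r = 2\<t\>/3$, use \eqref{ru}/\eqref{AwayCone} on the exterior and \eqref{decay-1} on the interior, and observe that $\nabla$ is itself a $Z$-field so that weighted second derivatives are controlled by $\XX_{|a|+2}$.

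For \eqref{Zaphi} your route is genuinely different and, I would say, a bit slicker. The paper takes a Littlewood--Paley split: on the high-frequency part it uses (reverse) Bernstein to convert $\|P_{>0}Z^a\phi\|_\infty$ into $\|\nabla Z^a\phi\|_\infty$, and on the low-frequency part it passes through $\|\nabla P_{\leq 0}Z^a\phi\|_{L^{3-\de_1}}$, which it bounds by interpolating between the $L^2$ and $L^\infty$ norms of $\nabla Z^a\phi$. Choosing $2/(3-\de_1)=2/3+\de$ yields the rate $\<t\>^{-1/3+\de}$, the $\de$-loss coming from staying off the critical endpoint $\dot W^{1,3}\not\hookrightarrow L^\infty$. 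You instead invoke the physical-space Gagliardo--Nirenberg inequality $\|f\|_{\infty} \les \|f\|_{L^6}^{2/3}\|\nabla f\|_\infty^{1/3}$, which is indeed valid in $\R^3$ (the exception to G--N requiring $\theta<1$ only arises for finite $r$, and it also follows directly from the elementary observation that near a point where $|f|$ is close to $M=\|f\|_\infty$, the gradient bound $L=\|\nabla f\|_\infty$ forces $|f|\geq M/2$ on a ball of radius $\sim M/L$, giving $\|f\|_{L^6}^6 \gtrsim M^9/L^3$). Combined with $\|Z^a\phi\|_{L^6}\les\|\nabla Z^a\phi\|_{L^2}\les E^{1/2}_{|a|}$ and the already-proved $\|\nabla Z^a\phi\|_\infty\les \<t\>^{-1}(E^{1/2}_{|a|+2}+\XX^{1/2}_{|a|+2})$, and using $E_{|a|}\leq E_{|a|+2}$ to absorb the unbalanced factor, this gives the rate $\<t\>^{-1/3}$, slightly stronger than the $\<t\>^{-1/3+\de}$ the statement asks for. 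Two small remarks: since the G--N inequality is a global estimate on $\R^3$, you do not need the interior/exterior split or the exterior $\<r\>^{-1/2}$ estimate at all for \eqref{Zaphi}, and there are no cutoff functions or logarithmic losses to worry about; the closing sentence about using the $\<t\>^\de$ slack to absorb a log loss is therefore a red herring. Also, the Young step $A^{2/3}B^{1/3}\les A+B$ is harmless but unnecessary: monotonicity $E_{|a|}\leq E_{|a|+2}$ already closes the estimate.

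Both approaches are correct. Yours trades the frequency-space machinery (Littlewood--Paley, Bernstein) for a single physical-space interpolation, which is more elementary and does not waste a power $\<t\>^\de$.
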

\begin{proof}
	Here we first consider the estimate of  $Z^a u$. Precisely, we  have from  \eqref{ru} and \eqref{decay-1} that
	\begin{align*}
		\<t\>\|Z^a u\|_{\lf}& \les\<t\>\|Z^a u\|_{\lf(r<2\<t\>/3)}+\<t\>\|Z^a u\|_{\lf(r\geq 2\<t\>/3)}\\
		&\les \|Z^a u\|_{L^2}+\|\<t-r\>\nab Z^{|a|+1}u\|_{L^2}+\sum_{|b|\leq 1}\|\d_r \Om^b u\|_{L^2}^{1/2}\sum_{|b|\leq 2}\|\Om^b Z^au\|_{L^2}^{1/2}\\
		&\les E^{1/2}_{|a|+2}+\XX^{1/2}_{|a|+2}.
	\end{align*}
The other terms can be obtained in a similar way. Then the desired bound \eqref{Decay-phi2} follows.

Next, we denote $P_{\leq 0}$, $P_{>0}$ as the projection of low frequency and high frequency, respectively. It follows from  Bernstein's inequality that
\begin{align*}
	\|Z^a\phi\|_{\lf}&\les \|\nab P_{\leq 0}Z^a\phi\|_{L^{3-\de_1}}+\|\nab P_{>0} Z^a\phi\|_{\lf}\\
	&\les \|\nab P_{\leq 0}Z^a\phi\|_{L^2}^{2/(3-\de_1)}\|\nab P_{\leq 0}Z^a\phi\|_{\lf}^{1-2/(3-\de_1)}+\|\nab P_{>0} Z^a\phi\|_{\lf}.
\end{align*}
Combining \eqref{Decay-phi2}, and choosing $2/(3-\de_1)=2/3+\de$, we arrive at
\begin{align*}
	\|Z^a\phi\|_{\lf}&\les E^{1/(3-\de_1)}_{|a|}\<t\>^{-1+2/(3-\de_1)}(E^{1/2}_{|a|+2}+\XX^{1/2}_{|a|+2})^{1-2/(3-\de_1)}+\<t\>^{-1}(E^{1/2}_{|a|+2}+\XX^{1/2}_{|a|+2})\\
	&\les \<t\>^{-1/3+\de}(E^{1/2}_{|a|+2}+\XX^{1/2}_{|a|+2}).
\end{align*}
Thus the bound \eqref{Zaphi} follows.
\end{proof}

As a corollary, we have the following estimates.
\begin{cor}
	Let $0<\de\leq 1/8$ be as in Theorem \ref{Ori_thm}. Assume that $(u,H,\phi)$ is a solution of \eqref{EL-elas-1} with $E^{1/2}_{N-2}\les \ep$, then there hold
	\begin{align}     \label{sin1}
		&\|Z^a\sin\phi\|_{L^6}\les \|\nab Z^{|a|}\phi\|_{L^2}, \qquad\qquad\qquad\ \  |a|\leq N,\\   \label{sin2}
		&\|Z^a\sin\phi\|_{\lf}\les  \<t\>^{-1/3+\de}(E^{1/2}_{|a|+2}+\XX^{1/2}_{|a|+2}),\quad |a|\leq N-2.
	\end{align}
\end{cor}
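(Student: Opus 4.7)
The plan is to expand $Z^a\sin\phi$ by the chain rule (Fa\`a di Bruno) and then estimate each resulting piece by Sobolev embedding together with the decay bound \eqref{Zaphi}. Since $\sin$ and $\cos$ are uniformly bounded by $1$, one obtains schematically
\[
Z^a\sin\phi \;=\; \cos\phi\cdot Z^a\phi \;+\; \sum_{k=2}^{|a|}\ \sum_{\substack{b_1+\cdots+b_k=a\\ |b_j|\geq 1}} c_{b_1,\ldots,b_k}\,\Theta_k(\phi)\, Z^{b_1}\phi\cdots Z^{b_k}\phi,
\]
where each $\Theta_k(\phi)$ is a product of $\sin\phi$ and $\cos\phi$ factors and is therefore bounded pointwise by $1$. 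Minor corrections arising from the convention $\tilde S\phi=(S-1)\phi$ only produce extra factors of $\phi$, which are uniformly small in $L^\infty$ by \eqref{Zaphi} and hence fit into the same scheme.

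For \eqref{sin1}, I would handle the linear term via the three-dimensional Sobolev embedding $\dot H^1(\R^3)\hookrightarrow L^6(\R^3)$, which gives $\|\cos\phi\cdot Z^a\phi\|_{L^6}\leq\|Z^a\phi\|_{L^6}\les\|\nab Z^a\phi\|_{L^2}$. For each multilinear term with $k\geq 2$, order the indices so that $|b_1|$ is maximal; then $|b_j|\leq |a|/2\leq N/2$ for $j\geq 2$, and hence $|b_j|+2\leq N-2$ because $N\geq 9$. Placing $Z^{b_1}\phi$ in $L^6$ by Sobolev and each of the remaining $Z^{b_j}\phi$ in $L^\infty$ via \eqref{Zaphi}, then combining with the smallness hypothesis $E_{N-2}^{1/2}\les\ep$ and the weighted $L^2$ control of $\XX_{N-2}$ from Section~\ref{sec-L2}, yields a contribution of size $\ep^{k-1}\|\nab Z^{b_1}\phi\|_{L^2}$, which is absorbed into $\|\nab Z^{|a|}\phi\|_{L^2}$.

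For \eqref{sin2}, since $|a|\leq N-2$ every $|b_j|\leq N-2$, so \eqref{Zaphi} is applicable to every factor. The linear term is bounded directly by \eqref{Zaphi}. For each multilinear term with $k\geq 2$, I would assign the full decay $\<t\>^{-1/3+\de}(E_{|a|+2}^{1/2}+\XX_{|a|+2}^{1/2})$ to the factor with maximal $|b_j|$, and let each of the remaining $k-1$ factors (with $|b_j|\leq(N-2)/2\leq N-4$) contribute the smallness bound $\ep$ via \eqref{Zaphi} together with the hypothesis $E_{N-2}^{1/2}\les\ep$. The combined contribution is $\ep^{k-1}\<t\>^{-1/3+\de}(E_{|a|+2}^{1/2}+\XX_{|a|+2}^{1/2})$, which matches the target.

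The only slightly delicate point is the mismatch between $\tilde S$ and the genuine vector field $S$ when the chain rule is applied to $\sin\phi$, since $\tilde S\phi=(S-1)\phi$ introduces an additional factor of $\phi$ each time $\tilde S$ is applied. These extra $\phi$'s are uniformly small by \eqref{Zaphi}, so they produce only higher-order multilinear terms that can be absorbed exactly as above; no new ideas are needed.
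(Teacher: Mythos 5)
Your proof is correct and takes essentially the same route as the paper's: both expand $Z^a\sin\phi$ into a linear term plus multilinear products $Z^{a_1}\phi\cdots Z^{a_i}\phi$ (the paper via the Taylor expansion $\sin\phi=\phi-\int_0^\phi\sin t\,(\phi-t)\,dt$, you via Fa\`a di Bruno), then bound each piece by placing the highest-order factor in $L^6$ (resp.\ $L^\infty$) and absorbing the lower-order factors through \eqref{Zaphi} and the smallness hypothesis. Your explicit attention to the $\tilde S$ versus $S$ bookkeeping is a minor refinement but does not change the substance of the argument.
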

\begin{proof}
	By the series expansion
	\begin{align*}
		\sin\phi=\phi-\int_0^\phi \sin t (\phi-t)\ dt,
	\end{align*}
we have
\begin{align}      \label{sinphi}
	|Z^a\sin\phi|\les |Z^a\phi|+ \Big|\int_0^\phi \sin t Z^a(\phi-t) dt\Big|+ \sum_{i\leq |a|;|a_1|+\cdots |a_i|\leq |a|} |Z^{a_1}\phi \cdots Z^{a_i}\phi|.
\end{align}
This yields the estimate
\begin{align*}
	\|Z^a\sin\phi\|_{L^6}\les \|\nab Z^{|a|}\phi\|_{L^2}(1+\|\phi\|_{\lf}+\|\nab Z^{N-2}\phi\|_{L^2})\les \|\nab Z^{|a|}\phi\|_{L^2}.
\end{align*}
The formula \eqref{sinphi} combined with \eqref{Zaphi} also gives
\begin{align*}
	\|Z^a\sin\phi\|_{\lf}\les \| Z^{|a|}\phi\|_{\lf}(1+\|\phi\|_{\lf}+\|\nab Z^{N-2}\phi\|_{L^2})\les \<t\>^{-1/3+\de}(E^{1/2}_{|a|+2}+\XX^{1/2}_{|a|+2}).
\end{align*}
\end{proof}

\bigskip
\section{Estimates of the \texorpdfstring{$L^2$}{} Weighted Norm}\label{sec-L2}
In this section, we focus on the estimates of $L^2$ weighted norm $\XX_{j}$. For clarification of notations, we denote
\begin{align*}
	&\XX^{uH}_{j}(t):=\sum_{|\al|\leq j-1}\Big(\|\<t-r\>\nab Z^a u\|_{L^2}^2+\|\<t-r\>\nab Z^a H\|_{L^2}^2 \Big),\\
	&\XX^\phi_{j}(t):=\sum_{|\al|\leq j-1} \|\<t-r\> |D^2 Z^a\phi\|_{L^2}^2.
\end{align*}
The main proposition we will prove is as follows.
\begin{prop}\label{XX-prop}
	Let $N\geq 9$. Suppose that $(u,H,\phi)$ is a solution of \eqref{EL-elas-1} satisfying the assumption $E_{N-2}\les \ep^2$. Then
	\begin{align}\label{L2uH}
		&\XX^{uH}_{N} (t)\les E_N(t),\qquad \XX^{uH}_{N-2} (t)\les E_{N-2}(t),\\
		\label{XX-bound}
		&\XX^\phi_{N}(t)\les E_N(t),\qquad \ \XX^\phi_{N-2} (t)\les E_{N-2}(t).
	\end{align}
\end{prop}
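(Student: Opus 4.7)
The plan is to prove the two estimates \eqref{L2uH} and \eqref{XX-bound} separately, both resting on a Klainerman--Sideris-type weighted inequality combined with the hyperbolic structure of the system. The guiding principle in each case is that a $\<t-r\>$-weight on second derivatives (or on $\nab$ of a vector-field derivative) can be traded for one extra unweighted vector-field derivative, modulo a $\<t-r\>$-weighted nonlinear forcing that the decay estimates of Section~\ref{sec-dec} can absorb.

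For $\XX^\phi_j$: The first step is to expand
$$D_t^2=\d_t^2+2u\cdot\nab\d_t+(\d_t u)\cdot\nab+u\cdot\nab(u\cdot\nab)$$
and move the material-derivative corrections from the left of \eqref{EL-VF}$_3$ to the right, obtaining a flat wave equation $\d_t^2 Z^a\phi-\De Z^a\phi=\tilde h_a$. The classical K--S inequality
$$\|\<t-r\>D^2 w\|_{L^2}\lesssim\sum_{|b|\leq 1}\|\d Z^b w\|_{L^2}+\|\<t-r\>(\d_t^2-\De)w\|_{L^2}$$
applied to $w=Z^a\phi$ for $|a|\leq j-1$ bounds the linear part by $\sum_{|b|\leq|a|+1}\|\d Z^b\phi\|_{L^2}\lesssim E_j^{1/2}$. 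The quadratic pieces of $h_a$ from \eqref{ha} and $\mathcal{R}_{2;a}$ are controlled by distributing $L^\infty$-decay (via \eqref{Decay-phi2}, \eqref{Zaphi}, \eqref{sin2}) onto low-index factors while keeping high-index factors in weighted $L^2$. The material-derivative corrections reproduce $\XX^\phi_j$ on the right with a coefficient $\|u\|_{L^\infty}\lesssim\epsilon\<t\>^{-1}$, allowing absorption under $E_{N-2}\lesssim\epsilon^2$.

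For $\XX^{uH}_j$: Since $Z$ contains no Lorentz boosts, $\<t-r\>\nab$ cannot be converted directly into $S$- and $\Om$-fields. Instead, I will differentiate \eqref{EL-VF}$_1$ in time and substitute \eqref{EL-VF}$_2$, using the constraint \eqref{curl-vf} together with $\nab\cdot Z^a H^T=0$ from \eqref{cst-re} to reorder indices in $\nab(\nab\cdot H)$; this produces wave equations $\d_t^2 Z^a u-\De Z^a u=\tilde F_a$ and $\d_t^2 Z^a H-\De Z^a H=\tilde G_a$ with controllable nonlinear right-hand sides. Applying K--S and then commuting $\nab$ past the $Z$-sequence (using Hodge decomposition together with the divergence-free constraints when the outermost vector field is $\tilde\Om$ or $\tilde S$ rather than a translation) reduces $\|\<t-r\>\nab Z^a u\|_{L^2}^2+\|\<t-r\>\nab Z^a H\|_{L^2}^2$ to $E_{|a|+1}$ plus weighted nonlinearities. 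These remaining nonlinearities are estimated using \eqref{AwayCone}--\eqref{omuH} and \eqref{Decay-phi2}, with the high-order bound paired with $E_N^{1/2}$ and the low-order one with $E_{N-2}^{1/2}\lesssim\epsilon$.

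The principal obstacle is taming the material-derivative nonlinearities in the $\phi$-equation: $u\cdot\nab\d_t Z^a\phi$ and $u\cdot\nab(u\cdot\nab Z^a\phi)$ regenerate the very $\XX^\phi_j$-norm being estimated, while $(\d_t u)\cdot\nab Z^a\phi$ involves $\d_t u$, which---unlike $u$---is not uniformly $\<t\>^{-1}$-small. The former terms will be absorbed via the bootstrap smallness $E_{N-2}\lesssim\epsilon^2$. For the latter, I will rewrite $\d_t u$ using \eqref{EL-VF}$_1$ as $\nab\cdot H-\nab p+\text{nonlinear}$, and invoke the decomposition $\nab=\om\d_r-\frac{\om}{r}\times\Om$ together with the good components $\om\cdot u$, $\om\cdot H^T$ from \eqref{omuH} to extract the missing $\<t\>^{-1/2}$-factor of decay and close the estimate.
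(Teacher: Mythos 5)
Your plan for $\XX^\phi_j$ is essentially the paper's: treat the $\phi$-equation as a flat wave equation with source $\tilde h_a$ given by the material-derivative cross-terms plus $\RR_{2;a}$, apply the Klainerman--Sideris lemma, bound the weighted source via the decay estimates, and absorb the reappearing $\XX^\phi$ using $E_{N-2}\lesssim\ep^2$ in a two-step bootstrap ($N-2$ first, then $N$). Two small slips: (i) the K--S source weight is $\<t+r\>$, not $\<t-r\>$; the paper's Lemma~\ref{Xphi} needs $\|\<t+r\>(\d_t^2-\De)Z^a\phi\|_{L^2}$, and the subsequent estimates split into $r<2\<t\>/3$ (where $\<t+r\>\approx\<t\>$) and $r\geq 2\<t\>/3$ (where $\<t+r\>\approx r$) to place the weight on an $L^\infty$ factor; (ii) $\d_t u$ \emph{is} $\<t\>^{-1}$-decaying by \eqref{Decay-phi2} applied with $Z=\d_t$, so the paper handles $\d_t Z^b u\cdot\nab Z^c\phi$ by the same region splitting without needing to substitute the equation for $\d_t u$. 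These are cosmetic.

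The gap is in $\XX^{uH}_j$. Your plan is to form second-order wave equations for $Z^a u$ and $Z^a H$ and apply K--S. But K--S for $\Box w=\mathrm{source}$ controls $\<t-r\>\d^2 w$, i.e.\ \emph{two} gradients of $w$, whereas $\XX^{uH}_j$ requires $\<t-r\>\nab Z^a u$ for $|a|\leq j-1$, i.e.\ \emph{one} gradient on top of up to $j-1$ vector fields. If you set $w=Z^b u$ with $|b|\leq j-2$ you stay within the $E_j$ budget, but $\<t-r\>\d^2 Z^{\leq j-2}u$ does not cover terms like $\<t-r\>\nab\tilde\Om^{j-1}u$ or $\<t-r\>\nab\tilde S^{j-1}u$: commuting $\nab$ past $\tilde\Om^{j-1}$ yields $\tilde\Om^{j-1}\nab u$ plus lower order, which is one gradient and $j-1$ rotations, not of the form $\d^2 Z^{\leq j-2}u$. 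If instead you take $|b|\leq j-1$ you overshoot to $E_{j+1}$. The ``Hodge decomposition'' remark does not close this; divergence-freeness of $u$ and $H^T$ does not convert a rotation into a gradient. The paper sidesteps the issue entirely: Lemma~\ref{Lem5.5} is a \emph{first-order} K--S analogue, obtained by multiplying the system by $t$ and writing $t\d_t=S-r\d_r$, which directly yields the pointwise bound $(t\pm r)|\nab Z^a u\pm\nab\cdot Z^a H\otimes\om|\lesssim L_k+N_k$ with no second-order wave equation and no extra derivative; and Lemma~\ref{Qk} then uses the curl constraint \eqref{curl-vf} plus integration by parts to upgrade control of $\nab\cdot Z^a H$ to control of the full $\nab Z^a H$ at the price of a cubic error $\QQ_a$. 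You would need analogues of both of those lemmas; as written, your outline does not produce $\|\<t-r\>\nab Z^a H\|_{L^2}$ (only $\nab\cdot Z^a H$ is ``good'' in any wave formulation), nor the scaling-field identity that removes the spurious extra derivative.
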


\medskip

To start with, we pay our attention to the $L^2$ weight estimates in \eqref{XX-bound}, whose proof  relies on the following  two useful lemmas.
\begin{lemma}  \label{Xphi}
	There holds for any $j\leq N$
	\begin{equation*}
		\XX^\phi_{j}\les E^\phi_{j}+\sum_{|a|\leq j-1}\|\<t+r\>(\d_t^2-\De)Z^{a}\phi\|_{L^2}^2.
	\end{equation*}
\end{lemma}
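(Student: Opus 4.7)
The plan is to reduce the lemma to a pointwise Klainerman--Sideris-type inequality applied to $\psi = Z^a\phi$ for each multi-index $|a| \leq j-1$. Concretely, I would establish the pointwise bound
$$\langle t - r\rangle |D^2 \psi(t,x)| \lesssim \sum_{|b|\leq 1} |D Z^b \psi(t,x)| + \langle t+r\rangle |(\d_t^2 - \De)\psi(t,x)|$$
for any smooth scalar $\psi$ on $[0,\infty)\times\R^3$. Squaring, integrating in $x$, and summing over $|a|\leq j-1$ then gives the stated inequality, provided the first term on the right is absorbed into $E^\phi_j$. The latter is routine: $DZ^{|a|+1}\phi$ with $|a|+1\leq j$ consists of factors $\d_t Z^b\phi$ and $\nabla Z^b\phi$ with $|b|\leq j$, and the identity $\d_t Z^b\phi = D_t Z^b\phi - u\cdot\nabla Z^b\phi$ combined with the smallness of $\|u\|_{L^\infty}$ from \eqref{Decay-phi2} shows that $\sum_{|b|\leq j}\|\d_t Z^b\phi\|_{L^2}^2$ is comparable to $E^\phi_j$.

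For the pointwise inequality, I would use the algebraic identities already present in the paper, namely $r\d_r = S - t\d_t$ (from the definition $S = t\d_t + x\cdot\nabla$) and the decomposition $\nabla = \omega\d_r - (\omega/r)\times\Om$ of \eqref{decomposition}, together with $\d_t^2 = (\d_t^2 - \De) + \De$. In the exterior region $r\geq 1$, these identities express every second spacetime derivative of $\psi$ as a combination of $r^{-1}$ times first derivatives of $Z\psi$ (with $Z\in\{\d,\Om,S\}$), plus a $(\d_t^2 - \De)\psi$ remainder. Since $\langle t-r\rangle/r$ is uniformly bounded on the support where $\langle t-r\rangle < \langle t+r\rangle$, multiplying by $\langle t-r\rangle$ yields the desired weights, and on the complementary region the weight $\langle t+r\rangle$ dominates $\langle t-r\rangle$ and is attached to the equation remainder. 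In the interior region $r\leq 1$ one has $\langle t-r\rangle \sim \langle t+r\rangle$, and $|D^2 \psi|$ is directly controlled by $|DZ^b\psi|$ with $Z=\d$ together with $|(\d_t^2-\De)\psi|$ via the identity $\d_t^2\psi = (\d_t^2 - \De)\psi + \De\psi$.

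The main obstacle is the bookkeeping in the derivation of the pointwise identity, especially for the purely temporal second derivative: from $r\d_r = S - t\d_t$ one obtains $t\d_t^2\psi = S\d_t\psi - r\d_r\d_t\psi$, and after combining with $\d_t^2\psi = (\d_t^2-\De)\psi + \De\psi$ and with the $\nabla$-decomposition for the spatial second derivatives, the $t$ and $r$ factors have to be arranged into $\langle t-r\rangle$ on the left and $\langle t+r\rangle$ on the right. This is essentially Proposition 3.2 of Klainerman--Sideris (1996), and the only care needed in the present context is that commutators are tracked so that only first derivatives of $Z\psi$ (and not $ZZ\psi$) appear on the right, and that the equivalence between $D_t$ and $\d_t$ (via the smallness of $u$ from Section \ref{sec-dec}) is invoked when passing from the flat inequality to the statement involving the material-derivative energy $E^\phi_j$.
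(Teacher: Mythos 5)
Your overall strategy --- reduce to the Klainerman--Sideris pointwise estimate for $\psi=Z^a\phi$, square and integrate in $x$, sum over $|a|\leq j-1$, and convert $\d_t$ to $D_t$ using boundedness of $\|u\|_{\lf}$ --- is exactly what the paper's citation of Lemmas 2.3 and 3.1 of \cite{KS96} amounts to, and the $\d_t$-to-$D_t$ step is a genuine (if routine) point the citation leaves implicit, since $E^\phi_j$ is built from the material derivative.

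However, the middle paragraph's sketch of the pointwise inequality is flawed as written. The condition $\<t-r\> < \<t+r\>$ holds for essentially all $(t,r)$ with $t,r>0$, and on that set $\<t-r\>/r$ is \emph{not} uniformly bounded: fix $r\geq 1$ and send $t\to\infty$. So the chain ``establish $|D^2\psi|\les r^{-1}|DZ\psi|+|(\d_t^2-\De)\psi|$, then multiply by $\<t-r\>$'' cannot deliver $\<t-r\>|D^2\psi|\les\sum_{|b|\leq1}|DZ^b\psi|+\<t+r\>|(\d_t^2-\De)\psi|$. The actual argument never passes through an $r^{-1}$-weighted intermediate bound. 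Combining $r\d_r=S-t\d_t$ with $\De=\d_r^2+\tfrac{2}{r}\d_r+\tfrac{1}{r^2}\sum_i\Om_i^2$ and $\d_t^2=(\d_t^2-\De)+\De$ yields the identity
\begin{equation*}
(t^2-r^2)\,\d_t^2\psi = -r^2(\d_t^2-\De)\psi - r\d_r\psi - \sum_i\Om_i^2\psi - r\d_r S\psi + t\d_t S\psi - t\d_t\psi,
\end{equation*}
and dividing by $t+r$, using $\tfrac{1}{t+r}|\Om_i^2\psi|\les\sum_{|b|\leq 1}|DZ^b\psi|$ (with a separate elementary bound near $r=0$), gives the pointwise estimate directly, the weight $\<t-r\>$ arising as $(t^2-r^2)/(t+r)$ rather than by multiplication; mixed and purely spatial second derivatives are treated analogously. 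Since you explicitly anchor the pointwise estimate in Klainerman--Sideris (it is their Lemma 2.3, not a ``Proposition 3.2''), your conclusion still stands; it is only the offered justification that needs this adjustment.
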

\begin{proof}
	The readers can refer to Lemma 2.3 and Lemma 3.1 in \cite{KS96} for the proof.
\end{proof}

From this lemma, we shall control the nonlinear terms of $Z^a\phi$-equation in \eqref{EL-VF}. Denote the total nonlinear terms as
\begin{equation}\label{tha}
	\begin{aligned}
		\tilde h_a:&= -\sum_{b+c=a}C_a^b \d_tZ^b u\cdot\nab Z^c\phi-\sum_{b+c=a}C_a^bZ^b u\cdot\nab\d_tZ^c\phi\\
		&\quad -\sum_{b+c+e=a}C_a^{b,c}Z^bu\cdot\nab(Z^cu\cdot\nab Z^e\phi)+\mathcal R_{2;a}.
	\end{aligned}
\end{equation}
\begin{lemma}  \label{4.3}
	For all multi-index $a$, there holds
	\begin{equation}     \label{(t+r)ha}
\begin{aligned}
		\|(t+r)\tilde h_a\|_{L^2}^2\les& (E_{|a|+1}+\XX^\phi_{|a|+1})E_{[|a|/2]+3}(1+E_{[|a|/2]+3}) \\&+E_{|a|+1}(1+E_{[|a|/2]+3})\XX^\phi_{[|a|/2]+3}.
\end{aligned}
	\end{equation}
\end{lemma}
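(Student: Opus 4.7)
The plan is to expand $\tilde h_a$ term-by-term according to \eqref{tha}, then estimate each bilinear (and trilinear) product by a high--low split: whichever factor carries at most $[|a|/2]$ vector fields is placed in $L^\infty$ with the weight $(t+r)$ attached to it, while the factor carrying more vector fields is placed in $L^2$. The resulting $L^2$ norm is then controlled by either $E_{|a|+1}^{1/2}$ or $(\XX^\phi_{|a|+1})^{1/2}$ depending on which unknown it involves.

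To absorb the weight $(t+r)$ on the $L^\infty$ factor, I split $\R^3$ into the interior region $\{r\le t\}$ and the exterior $\{r\ge t\}$. In the interior $\<t+r\>\sim\<t\>$, so the pointwise decay estimate \eqref{Decay-phi2} gives
\[
(t+r)\|Z^b u\|_{\lf(r\le t)} + (t+r)\|\nab Z^c\phi\|_{\lf(r\le t)}\les (E_{|b|+2}+\XX_{|b|+2})^{1/2},
\]
while in the exterior $\<t+r\>\sim\<r\>$ the weighted $L^\infty$ bound \eqref{AwayCone} produces the same inequality. Consequently every bilinear factor whose small entry is one of $Z^b u$, $Z^b H$ or $\nab Z^c\phi$ contributes at most $(E_{[|a|/2]+3}+\XX_{[|a|/2]+3})^{1/2}\cdot E_{|a|+1}^{1/2}$ to $\|(t+r)\tilde h_a\|_{L^2}$, which accounts for the first summand on the right of \eqref{(t+r)ha}.

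The more delicate factor is the mixed second derivative $\nab\d_t Z^c\phi=D^2 Z^c\phi$ that appears in $Z^b u\cdot\nab\d_t Z^c\phi$. When $|c|\le[|a|/2]$ I still need a pointwise bound on $D^2 Z^c\phi$; this is produced by applying \eqref{decay-1} inside the cone and \eqref{ru} outside, and gives
\[
(t+r)\|D^2 Z^c\phi\|_{\lf}\les (E_{|c|+3}+\XX^\phi_{|c|+3})^{1/2}.
\]
Pairing this with $\|Z^b u\|_{L^2}\les E_{|a|+1}^{1/2}$ produces the second summand $E_{|a|+1}\XX^\phi_{[|a|/2]+3}$ of \eqref{(t+r)ha}. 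The cubic term $Z^b u\cdot\nab(Z^c u\cdot\nab Z^e\phi)$ is treated by the same dichotomy, distributing the three factors so that at most one of them carries more than $[|a|/2]$ vector fields; the remaining two $L^\infty$ factors combine via the bootstrap assumption $E_{N-2}\les\ep^2$ to yield the $1+E_{[|a|/2]+3}$ multiplier.

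For the trigonometric remainder $\mathcal R_{2;a}$ I will expand $\sin,\cos,\tan$ in power series around $\phi=0$ and differentiate term-by-term, using \eqref{sin1}--\eqref{sin2} to estimate the trigonometric coefficients and the auxiliary factors $Z^{b_i}\phi$; these extra factors are again absorbed into the $1+E_{[|a|/2]+3}$ term by the smallness of the low-order energy. I expect the main technical hurdle to be the combinatorial book-keeping: in every summand one must assign exactly one factor to $L^2$ so that it is controlled by $E_{|a|+1}^{1/2}$ or $(\XX^\phi_{|a|+1})^{1/2}$, while simultaneously ensuring that \emph{every} remaining factor picks up a full $(t+r)^{-1}$ from the interior/exterior split. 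This is the standard Sideris--Thomases weighted mechanism, and once the indices are tracked carefully the estimate \eqref{(t+r)ha} drops out.
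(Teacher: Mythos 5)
Your overall architecture is the right one and matches the paper's: expand $\tilde h_a$ term by term, split $\R^3$ near and away from the cone, and play $E$ against the weighted energies $\XX$. The issue is in the rigid rule you impose for the bilinear terms, namely that the factor with at most $[|a|/2]$ vector fields is \emph{always} placed in $L^\infty$ with the full weight $(t+r)$ attached. That rule fails the moment the low-order factor is a velocity or deformation quantity. In the interior region $r\lesssim\<t\>$, the only way you have to absorb a full power of $\<t\>$ on $\|Z^b u\|_{\lf}$ is \eqref{decay-1} (equivalently \eqref{Decay-phi2}), and that estimate produces $\|\<t-r\>\nab Z^{b'} u\|_{L^2}$-type quantities, i.e.\ it costs you $\XX^{uH}$, not $\XX^\phi$. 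But the statement you are trying to prove has \emph{no} $\XX^{uH}$ on the right-hand side, and it cannot: in the proof of Proposition~\ref{XX-prop} this lemma is used to close the bound $\XX^\phi_{N-2}\lesssim E_{N-2}$ \emph{before} $\XX^{uH}$ is controlled, so allowing $\XX^{uH}_{[|a|/2]+3}$ to appear would make the bootstrap circular.

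The paper gets around exactly this point by switching H\"older exponents when the low-order entry is a $u$- or $H$-quantity: instead of $L^2\times L^\infty$ with the weight on the $L^\infty$ factor, it uses $L^3\times L^6$ and places the weight $\<t\>$ on the $\phi$-factor in $L^6$ via \eqref{decay-2}, which only costs $\XX^\phi$. That is also where the $\XX^\phi_{|a|+1}$ in the \emph{first} summand of \eqref{(t+r)ha} comes from, a term your accounting does not produce: with your distribution the high-order $L^2$ factor only ever gives $E_{|a|+1}$, never $\XX^\phi_{|a|+1}$, while the low-order factor picks up the offending $\XX^{uH}$. So the two RHS's don't just differ cosmetically; your version is genuinely incompatible with the intended chain of dependencies. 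The fix is not more bookkeeping but a change of pairing: whenever the low-order factor is $Z^b u$, $\d_t Z^b u$ or $Z^b H$, bound it in $L^3$ by Sobolev (costing only $E$), and put the weight on the $\phi$-factor in $L^6$ (costing $\XX^\phi$).
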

\begin{proof}
	From the $\tilde h_a$ in \eqref{tha}, we have
	\begin{align*}
		\|(t+r)\tilde h_a\|_{L^2}^2&\les \sum_{b+c=a}\|(t+r)\d_t Z^b u\cdot\nab Z^c\phi\|_{L^2}^2+\sum_{b+c=a}C_a^b\|(t+r)Z^b u\cdot\nab\d_tZ^c\phi\|_{L^2}^2\\
		&\quad +\sum_{b+c+e=a}\|(t+r)Z^bu\cdot\nab(Z^cu\cdot\nab Z^e\phi)\|_{L^2}^2+\|(t+r)\RR_{2;a}\|_{L^2}^2\\
		:&=I_1+I_2+I_3+I_4.
	\end{align*}

First, we consider the estimate to  $I_1$
\begin{align}   \label{I1}
	I_1\les \|(t+r)\d_t Z^{|a|}u \nab Z^{[|a|/2]}\phi\|_{L^2}^2+\|(t+r)\d_t Z^{[|a|/2]}u \nab Z^{|a|}\phi\|_{L^2}^2.
\end{align}
In the integral region $r\geq 2\<t\>/3$, by \eqref{AwayCone}, we estimate the right hand side of \eqref{I1} as
\begin{equation}\label{I1-r>}
	\begin{aligned}
		&\|\d_t Z^{|a|}u\|_{L^2}^2\|r\nab Z^{[|a|/2]}\phi\|_{\lf}^2+\|r\d_t Z^{[|a|/2]}u\|_{\lf}\|\nab Z^{|a|}\phi\|_{L^2}^2\\
		&\les E_{|a|+1}E_{[|a|/2]+2}+E_{[|a|/2]+3}E_{|a|}.
	\end{aligned}
\end{equation}
 In the region $r<2\<t\>/3$, using \eqref{decay-1} and \eqref{decay-2}, the right hand side of \eqref{I1} is bounded by
\begin{align*}
	&\|\d_t Z^{|a|}u\|_{L^2}^2 \|\<t\>\nab Z^{[|a|/2]}\phi\|_{\lf(r<2\<t\>/3)}^2+\|\d_t Z^{[|a|/2]}u\|_{L^3}\|\<t\>\nab Z^{|a|}\phi \|_{L^6(r<2\<t\>/3)}\\
	&\les E_{|a|+1} \big(E_{[|a|/2]}+\XX_{[|a|/2]+2}\big)+E_{[|a|/2]+2}
\big(E_{|a|}+\XX_{|a|+1}\big).
\end{align*}

\medskip
Then we consider  $I_2$. For the case $r\geq 2\<t\>/3$, along the exact same lines as in the proof of \eqref{I1-r>}, we derive
\begin{align*}
	&\sum_{b+c=a}C_a^b\|(t+r)Z^b u\cdot\nab\d_tZ^c\phi\|_{L^2(r\geq 2\<t\>/3)}^2\\
	&\les \|Z^{|a|}u\|_{L^2}^2\|r\nab \d_t Z^{[|a|/2]}\phi\|_{\lf}^2+\|r Z^{[|a|/2]}u\|_{\lf}\|\nab \d_t Z^{|a|}\phi\|_{L^2}^2\\
	&\les E_{|a|}E_{[|a|/2]+3}+E_{[|a|/2]+2}E_{|a|+1}.
\end{align*}
For the case $r< 2\<t\>/3$, from \eqref{decay-1} and the Sobolev embedding theorem, we arrive at
\begin{align*}
	&\sum_{b+c=a}C_a^b\|(t+r)Z^b u\cdot\nab\d_tZ^c\phi\|_{L^2(r< 2\<t\>/3)}^2\\
	&\les \| Z^{|a|}u\|_{L^2}^2 \|\<t\>\nab\d_t Z^{[|a|/2]}\phi\|_{\lf(r<2\<t\>/3)}^2+\| Z^{[|a|/2]}u\|_{\lf}^2\|\<t-r\>\nab \d_t Z^{|a|}\phi \|_{L^2(r<2\<t\>/3)}^2\\
	&\les E_{|a|} \big(E_{[|a|/2]+1}+\XX_{[|a|/2]+3}\big)+E_{[|a|/2]+2}\XX_{|a|+1}.
\end{align*}

In a similar way, for $I_3$, we can also obtain that
\begin{align*}
	I_3\les E_{|a|+1}E_{[|a|/2]+3}\big(E_{[|a|/2]+3}+\XX^\phi_{[|a|/2]+3}\big)
+\big(\XX^\phi_{|a|+1}+E_{|a|+1}\big)E_{[|a|/2]+3}^2.
\end{align*}

Finally, we bound  $I_4$ as
	\begin{equation}\label{I4-est}
		\begin{aligned}
			I_4=&\|(t+r)\RR_{2;a}\|_{L^2}^2\\
			 \les &\ \sum_{b+c+e=a}\|(t+r)Z^b\tan\phi_2(D_t Z^c\phi_1 D_t Z^e\phi_2-\nab Z^c\phi_1\nab Z^e\phi_2)\|_{L^2}^2\\
			&+\sum_{b+c+e=a}\|(t+r)Z^b\sin 2\phi_2(-D_t Z^c\phi_1 D_t Z^e\phi_1+\nab Z^c\phi_1 \nab Z^e\phi_1)\|_{L^2}^2.
		\end{aligned}
	\end{equation}
 In the region $r<2\<t\>/3$, we obtain from \eqref{decay-1} and \eqref{decay-2} that
\begin{align*}
	&\|(t+r)\RR_{2;a}\|_{L^2(r<2\<t\>/3)}^2\\
	&\les \|\<t\>\mathcal R_{2;a}\|_{L^2(r<2\<t\>/3)}^2\\
	&\les \|Z^{|a|/2}(\tan\phi,\sin 2\phi)\|_{\lf}^2 \|(D_t Z^{|a|}\phi,\nab Z^{|a|}\phi)\|_{L^2}^2\|\<t\>(D_tZ^{|a|/2}\phi,\nab Z^{|a|/2}\phi)\|_{\lf(r<2\<t\>/3)}^2\\
	&\quad + \|Z^{|a|}(\tan\phi,\sin 2\phi)\|_{L^6}^2 \|(D_tZ^{|a|/2}\phi,\nab Z^{|a|/2}\phi)\|_{L^6}^2\|\<t\>(D_t Z^{ |a|/2}\phi,\nab Z^{ |a|/2}\phi)\|_{L^6(r<2\<t\>/3)}^2\\
	&\les E^{\phi }_{[|a|/2]+1}E^\phi_{|a|}\big(\XX^\phi_{[|a|/2]+2}+E^\phi_{[|a|/2]+2}\big).
\end{align*}
In the region $r\geq 2\<t\>/3$, by \eqref{AwayCone}, we infer
\begin{align*}
	&\|(t+r)\mathcal R_{2;a}\|_{L^2(r\geq 2\<t\>/3)}^2\\
	&\les \| r\mathcal R_{2;a}\|_{L^2(r\geq 2\<t\>/3)}^2\\
	&\les \|Z^{|a|/2}(\tan\phi,\sin 2\phi)\|_{\lf}^2 \|(D_tZ^{|a|}\phi,\nab Z^{|a|}\phi)\|_{L^2}^2\|r(D_tZ^{ |a|/2}\phi,\nab Z^{|a|/2}\phi)\|_{\lf(r\geq 2\<t\>/3)}^2\\
	&\quad + \|Z^{ |a|}(\tan\phi,\sin 2\phi)\|_{L^6}^2 \|(D_tZ^{|a|/2}\phi,\nab Z^{|a|/2}\phi)\|_{L^3}^2\|r (D_tZ^{ |a|/2}\phi,\nab Z^{ |a|/2}\phi)\|_{L^\infty(r\geq 2\<t\>/3)}^2\\
	&\les E^{\phi }_{[|a|/2]+1}E^\phi_{|a|}E^\phi_{[|a|/2]+2}.
\end{align*}

Combining the estimates of $I_1,\cdots,I_4$ gives the bound \eqref{(t+r)ha}. This completes the proof of Lemma \ref{4.3}.
\end{proof}

\medskip
With the above two lemmas in hand, we proceed to prove the bound \eqref{XX-bound}.
\begin{proof}[Proof of the estimates in \eqref{XX-bound}]
	Noticing the third equation of \eqref{EL-VF}, applying Lemma \ref{Xphi} with $j=N-2$ and Lemma \ref{4.3}, we deduce
	\begin{align*}
		\XX^\phi_{N-2}&\les E_{N-2}+\sum_{|a|\leq N-3}\|(t+r)\tilde h_a\|_{L^2}^2\\
		&\les E_{N-2}+(E_{N-2}+\XX^\phi_{N-2})E_{N-2}(1+E_{N-2})
		+E_{N-2}(1+E_{N-2})\XX^\phi_{N-2}\\
		&\les E_{N-2}(1+E^2_{N-2})+E_{N-2}(1+E_{N-2})\XX^\phi_{N-2},
	\end{align*}
which together with $E_{N-2}\les \ep^2$ implies
\begin{equation}  \label{XX_ka-2re}
	\XX^\phi_{N-2}\les E_{N-2}.
\end{equation}
	Then Lemma \ref{Xphi} with $j=N$, Lemma \ref{4.3}, and the estimate \eqref{XX_ka-2re} give
	\begin{align*}
		\XX^\phi_{N}&\les E_{N}+\sum_{|a|\leq N-1}\|(t+r)\tilde h_a\|_{L^2}^2\\
		&\les E_{N}+(E_{N}+\XX^\phi_{N})E_{N-2}(1+E_{N-2}^3) +E_{N}(1+E_{N-2}^3)\XX^\phi_{N-2}\\
		&\les E_N+\ep^2(E_N+\XX^\phi_N).
	\end{align*}
	The above inequality leads to
	\begin{align*}
		\XX^\phi_{N}\les E_N.
	\end{align*}
Thus the desired bounds in \eqref{XX-bound} are obtained.
\end{proof}

\medskip
Next, we turn to the proof of estimates in \eqref{L2uH}. We start with some useful lemmas. Though the following three lemmas have been proved in \cite{LeiWang}, we state them again due to the minor difference.
\begin{lemma}  \label{Qk}
	Assume that $Z^a H$ satisfies \eqref{curl-vf}, then there holds
	\begin{align}   \label{H-(t-r)}
		\|(t-r)\nab Z^a H\|_{L^2}^2\les \|Z^a H\|_{L^2}^2+\|(t-r)\nab \cdot Z^a H\|_{L^2}^2+\QQ_a,
	\end{align}
where
\begin{equation*}
	\QQ_a=\sum_{b+c=a}\int (t-r)^2 \d_j Z^a H_{ik}\big(Z^b H_{lk}\d_l Z^c H_{ij}-Z^b H_{lj}\d_lZ^c H_{ik}\big)\ dx .
\end{equation*}
Moreover,
\begin{align}       \label{Qa-est}
	\QQ_a\les (\XX^H_{|a|+1} +E_{|a|})E^{1/2}_{[|a|/2]+3}+(\XX^{H}_{|a|+1})^{1/2} E^{1/2}_{|a|}(\XX^{H}_{[|a|/2]+3})^{1/2}.
\end{align}
\end{lemma}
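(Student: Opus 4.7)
My plan is a Klainerman--Sideris type coercivity identity adapted to the commuted curl constraint for \eqref{H-(t-r)}, followed by a trilinear Hölder estimate with region decomposition for \eqref{Qa-est}.

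For \eqref{H-(t-r)}, the starting algebraic identity is
\[
\d_j Z^a H_{ik}\,\d_j Z^a H_{ik}=\d_j Z^a H_{ik}\,\d_k Z^a H_{ij}+\d_j Z^a H_{ik}\bigl(\d_j Z^a H_{ik}-\d_k Z^a H_{ij}\bigr),
\]
in which the last factor coincides with $\mathcal N_{a,jik}$ by \eqref{curl-vf}, so that the $(t-r)^2$-weighted integral of the second piece is precisely $\QQ_a$. For the remaining ``symmetric'' piece $\int (t-r)^2\d_j Z^a H_{ik}\,\d_k Z^a H_{ij}\,dx$ I would integrate by parts twice, first in $\d_j$ and then in $\d_k$. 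Since $\sum_j\d_j Z^a H_{ij}=(\nab\cdot Z^a H)_i$, this produces $\int(t-r)^2|\nab\cdot Z^a H|^2\,dx$ together with two boundary-type terms in which $\nab\bigl((t-r)^2\bigr)=O(|t-r|)$ multiplies $Z^a H$ and a first derivative of $Z^a H$. A Cauchy--Schwarz with a small parameter converts those into $\|Z^a H\|_{L^2}^2$, $\|(t-r)\nab\cdot Z^a H\|_{L^2}^2$, and a small fraction of $\|(t-r)\nab Z^a H\|_{L^2}^2$ that is absorbed on the left, establishing \eqref{H-(t-r)}.

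For \eqref{Qa-est}, I would first apply Cauchy--Schwarz to extract $\|(t-r)\nab Z^a H\|_{L^2}=(\XX^H_{|a|+1})^{1/2}$. The remaining task is to bound $\|(t-r)Z^b H\,\d_l Z^c H\|_{L^2}$ uniformly over the sum $b+c=a$. I would split the summation according to which index is $\leq[|a|/2]$ and decompose the spatial integral into the interior region $\{r\leq 2\<t\>/3\}$ (where $(t-r)\sim\<t\>$ and \eqref{Decay-phi2} supplies a matching $\<t\>^{-1}$ decay on the low-derivative factor) and the exterior region $\{r>2\<t\>/3\}$ (where $(t-r)\lesssim r$ and \eqref{AwayCone} supplies a matching $r^{-1}$ pointwise bound on the low-derivative factor). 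Combined with $\XX^\phi\lesssim E$ from \eqref{XX-bound}, this yields, in the intermediate cases $1\leq|b|,|c|\leq|a|-1$,
\[
\|(t-r)Z^b H\,\d_l Z^c H\|_{L^2}\lesssim \bigl(E^{1/2}_{[|a|/2]+3}+(\XX^H_{[|a|/2]+3})^{1/2}\bigr)E^{1/2}_{|c|+1},
\]
where the restriction $|c|+1\leq|a|$ is crucial. Multiplying by $(\XX^H_{|a|+1})^{1/2}$ and Young-ing on the $E^{1/2}_{[|a|/2]+3}$-factor produces both summands of the desired bound. The extremal cases $|c|=|a|$ and $|b|=|a|$, in which the low-frequency factor is $H$ itself or $\nab H$, are handled by an $L^2\cdot L^\infty$ Hölder using $\|H\|_{L^\infty}+\|\nab H\|_{L^\infty}\lesssim E^{1/2}_{[|a|/2]+3}+(\XX^H_{[|a|/2]+3})^{1/2}$ from \eqref{Decay-phi2} and \eqref{XX-bound}, and their contributions are absorbed into the first term $(\XX^H_{|a|+1}+E_{|a|})E^{1/2}_{[|a|/2]+3}$.

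The main difficulty will be the bookkeeping in the intermediate cases: a naive Hölder would give $E^{1/2}_{|a|+1}$, and only by excluding the extremal $|c|=|a|$ and using the region decomposition (which replaces a uniform control of the weighted product $(t-r)Z^b H$ by two distinct pointwise controls, one on each side of the light cone) does one recover $E^{1/2}_{|a|}$, as required by the form of \eqref{Qa-est}.
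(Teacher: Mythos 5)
Your treatment of \eqref{H-(t-r)} matches the paper's proof step for step: split $|\nabla Z^a H|^2$ into the symmetric piece $\partial_j Z^a H_{ik}\,\partial_k Z^a H_{ij}$ plus the curl-constraint correction identified with $\mathcal N_{a,jik}$ (giving $\QQ_a$), integrate the symmetric piece by parts in $\partial_j$ and then in $\partial_k$ to produce $|(t-r)\nabla\cdot Z^a H|^2$ and two commutator terms carrying a single factor of $|t-r|$, then absorb by Cauchy--Schwarz. That part is fine.

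For \eqref{Qa-est} the overall strategy (Cauchy--Schwarz out one $\|(t-r)\nabla Z^a H\|_{L^2}$, split $b+c=a$ by which index is $\leq[|a|/2]$, place the low-derivative factor in $L^\infty$, and control the weighted $L^\infty$ norm by decomposing into $r<2\<t\>/3$ and $r\geq 2\<t\>/3$) is also the paper's. However, your weight assignment in the endpoint case $|b|=|a|$, $c=0$ does not close. There the integrand is $(t-r)^2\,\partial_j Z^a H_{ik}\,Z^a H_{lk}\,\partial_l H_{ij}$, and you propose an $L^2\cdot L^\infty$ H\"older with the \emph{unweighted} $\|\nabla H\|_{L^\infty}$. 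That leaves the remaining weight to ride on $Z^a H$, so you would need $\|(t-r)Z^a H\|_{L^2}$, a quantity controlled by neither $E$ nor $\XX$ (the weighted norm $\XX^{uH}$ weights $\nabla Z^a H$, not $Z^a H$). The paper avoids this by keeping both $(t-r)$ weights on the \emph{differentiated} factors in every case: for the low-$c$ branch it pairs $\|(t-r)\nabla Z^a H\|_{L^2}\cdot\|Z^{|a|}H\|_{L^2}\cdot\|(t-r)\nabla Z^{[|a|/2]}H\|_{L^\infty}$, so that at $c=0$ the weighted norm $\|(t-r)\nabla H\|_{L^\infty}\lesssim E^{1/2}_{3}+(\XX^H_3)^{1/2}$ absorbs the weight and the top-order factor comes out as the plain $\|Z^a H\|_{L^2}=E^{1/2}_{|a|}$. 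Once you fix that, your intermediate-case display (which currently attaches $E^{1/2}_{|c|+1}$ to what should be the high-order factor, rather than $(\XX^H_{|c|+1})^{1/2}$ or $E^{1/2}_{|b|}$ depending on which branch is being treated) also falls into place. One further point: the appeal to $\XX^\phi\lesssim E$ from \eqref{XX-bound} is unnecessary here --- this lemma is a pure $H$-estimate and is deliberately stated in terms of $E$ and $\XX^H$ alone, so that it can be invoked within the proof of \eqref{L2uH} without circularity.
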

\begin{proof}
	The formula \eqref{H-(t-r)} is obtained from \eqref{curl-vf} and integration by parts. Precisely, for the case $a=0$,  we have from  \eqref{curl-vf} that
	\begin{align*}
		\|(t-r)\nab H\|_{L^2}^2&=\int (t-r)^2 \d_j  H_{ik}\d_j  H_{ik}\ dx \\
		&=\int (t-r)^2 \d_j  H_{ik}\d_k H_{ij}+(t-r)^2 \d_j  H_{ik}(\d_j  H_{ik}-\d_k  H_{ij})\ dx\\
		&= \int (t-r)^2 \d_j  H_{ik}\d_k  H_{ij}+(t-r)^2 \d_j  H_{ik}( H_{mk}\d_m  H_{ij}- H_{mj} \d_m  H_{ik})\ dx \\
		&=\int (t-r)^2 \d_j  H_{ik}\d_k  H_{ij}\ dx +\QQ_0.
	\end{align*}
Apply  integration by parts, the first term is rewritten as
\begin{align*}
	\int (t-r)^2 \d_j  H_{ik}\d_k  H_{ij}\ dx&=\int 2(t-r)\om_j   H_{ik}\d_k  H_{ij}- (t-r)^2  H_{ik}\d_j\d_k  H_{ij}\ dx\\
	&=\int 2(t-r)\om_j   H_{ik}\d_k  H_{ij}- 2(t-r) \om_k H_{ik}\d_j  H_{ij}\\
	&\quad +(t-r)^2 \d_k H_{ik}\d_j H_{ij}\ dx\\
	&\leq C\|H\|_{L^2}^2+\frac{1}{2}\|(t-r)\nab H\|_{L^2}^2+C\|(t-r)\nab\cdot H\|_{L^2}^2.
\end{align*}
Hence, we derive
\begin{align*}
	\|(t-r)\nab H\|_{L^2}^2\les \|H\|_{L^2}^2+\|(t-r)\nab\cdot H\|_{L^2}^2+\QQ_0.
\end{align*}
This yields the bound \eqref{H-(t-r)} with $a=0$. Always along the same lines, the estimate \eqref{H-(t-r)} for general $a$ can also be obtained.
	
	For the estimate to formula $\QQ_a$, by the Sobolev embedding theorem, \eqref{AwayCone} and \eqref{Decay-phi2}, we have
	\begin{align*}
		\QQ_a&\les \int \<t-r\>^2 |\nab Z^{a}H|\Big(|Z^{[|a|/2]}H\nab Z^{|a|}H|+|Z^{|a|}H\nab Z^{[|a|/2]}H|\Big)\ dx\\
		&\les \|\<t-r\>\nab Z^{a}H\|_{L^2}^2\||Z^{[|a|/2]}H\|_{\lf}\\
		&\quad +\|\<t-r\>\nab Z^{a}H\|_{L^2}\| Z^{|a|}H\|_{L^2}\|\<t-r\>\nab Z^{[|a|/2]}H\|_{\lf}\\
		&\les \XX^H_{|a|+1} E_{[|a|/2]+2}^{1/2}+(\XX_{|a|+1}^{H})^{1/2}E^{1/2}_{|a|}
\big(E^{1/2}_{[|a|/2]+3}+(\XX^{H}_{[|a|/2]+3})^{1/2}\big)\\
		&\les \big(\XX^H_{|a|+1} +E_{|a|}\big)E^{1/2}_{[|a|/2]+3}+(\XX^{H}_{|a|+1})^{1/2} E^{1/2}_{|a|}(\XX^{H}_{[|a|/2]+3})^{1/2}.
	\end{align*}
This completes the proof of Lemma \ref{Qk}.
\end{proof}

\begin{lemma}\label{Lem5.5}
	Assume that $(u,H,\phi)$ is the solution of \eqref{EL-VF}.  We denote
	\begin{align}   \nonumber
		L_k&=\sum_{|a|\leq k}(|Z^a u|+|Z^a H|),\\ \label{Nk}
		N_k&=\sum_{|a|\leq k-1}\big(t|f_a|+t|g_a|+(t+r)|\mathcal N_a|+t|\nab Z^a p|\big).
	\end{align}
Then for all $|a|\leq k-1$,
\begin{align}\label{goodun}
	(t\pm r)|\nab Z^a u\pm \nab \cdot Z^a H\otimes \om |\les L_k+N_k.
\end{align}
\end{lemma}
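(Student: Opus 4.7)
The strategy is to derive a pointwise algebraic identity rewriting $\nabla Z^a u \pm \nabla\cdot Z^a H\otimes\omega$ as a time derivative of a ``good unknown'' $V^\pm_{ij}:=Z^a H_{ij}\pm\omega_j Z^a u_i$ modulo nonlinear terms, and then to trade that time derivative for the scaling vector field $\tilde S$. From the first two equations in \eqref{EL-VF} one reads off $\partial_j Z^a u_i=\partial_t Z^a H_{ij}-g_{a,ij}$ (second line) and $\omega_j\partial_k Z^a H_{ik}=\partial_t(\omega_j Z^a u_i)+\omega_j(\partial_i Z^a p-f_{a,i})$ (first line, using $\partial_t\omega=0$). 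Adding or subtracting gives the master identity
\[
\partial_j Z^a u_i \pm \omega_j\,\partial_k Z^a H_{ik} = \partial_t V^\pm_{ij} \pm \omega_j(\partial_i Z^a p - f_{a,i}) - g_{a,ij}.
\]
Multiplying by $(t\pm r)$ and applying $(t\pm r)\partial_t = \tilde S \pm r(\partial_t\mp\partial_r)$ together with $\tilde S\omega=0$ converts the time derivative into $\tilde S Z^a H_{ij} \pm \omega_j\tilde S Z^a u_i + r(\partial_t\mp\partial_r)V^\pm_{ij}$; the scaling pieces are $Z^{|a|+1}$-derivatives of $(u,H)$ and are bounded directly by $L_k$.

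The null-direction piece $r(\partial_t\mp\partial_r)V^\pm_{ij}$ is expanded by re-using the PDE for $\partial_t V^\pm$ and invoking the decomposition $\partial_k=\omega_k\partial_r-(\omega\times\Omega)_k/r$. The angular portions, after the external $r$ cancels the $1/r$ weight, become $(\omega\times\Omega) Z^a u$ and $\omega\,(\omega\times\Omega)\!\cdot\!Z^a H$, i.e.\ again $Z^{|a|+1}$-derivatives of $(u,H)$, hence in $L_k$. The surviving radial mixed combination $r(\omega_k\partial_k Z^a H_{ij}-\omega_j\partial_k Z^a H_{ik})$ is reorganized by the curl constraint \eqref{curl-vf}, which lets us swap $\partial_k Z^a H_{ij}=\partial_j Z^a H_{ik}-\mathcal N_{a,jik}$; the swap produces the $r\omega_k\mathcal N_{a,jik}$ contribution absorbed into the $(t+r)|\mathcal N_a|$ part of $N_k$, while the remainder coming from $\partial_j\omega_k=(\delta_{jk}-\omega_j\omega_k)/r$ again pairs its $1/r$ with the external $r$ to give pure $Z^a H$ terms in $L_k$.

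Finally, the residual algebraic source terms $(t\pm r)[\pm\omega_j(\partial_i Z^a p-f_{a,i})-g_{a,ij}]$ contribute $t|f_a|+t|g_a|+t|\nabla Z^a p|$ in the region $r\lesssim t$, the complementary region $r\gtrsim t$ being handled by the same null-frame algebra that feeds into the $(t+r)|\mathcal N_a|$ bookkeeping; every such term lies in $N_k$. The \emph{main obstacle} is the null-frame bookkeeping in the middle step: naive manipulation produces terms of the form $(t/r)\,Z^a(u,H)$ that do not belong to $L_k$, so the argument must be organized so that every $1/r$ singularity is paired with an $r$ factor or with an angular differential operator, and one must recognize the curl constraint \eqref{curl-vf} as the structural mechanism converting the ``bad'' component of $\nabla Z^a H$ into the genuine nonlinearity $\mathcal N_a$.
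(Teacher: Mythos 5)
Your proposal follows essentially the same route as the paper: you multiply the $u$- and $H$-equations of \eqref{EL-VF} by $t$, trade $t\partial_t$ for the scaling vector field, and exploit the radial/angular decomposition $\nabla = \omega\partial_r - \frac{\omega}{r}\times\Omega$ together with the curl constraint \eqref{curl-vf} to absorb the remaining $1/r$-singular pieces into $L_k$ and $N_k$. The paper organizes this by first proving the two helper bounds \eqref{good1}--\eqref{good2} (which say $r|\nabla\cdot Z^a H\otimes\omega - \partial_r Z^a H|$ and $r|\nabla Z^a u - \partial_r Z^a u\otimes\omega|$ are controlled by $L_k + N_k$) and then inserting them into the scaling identities, while you package the same computation around the single combination $V^\pm_{ij} = Z^a H_{ij}\pm\omega_j Z^a u_i$. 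The ingredients, the cancellations, and the role of \eqref{curl-vf} are the same in both versions.

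One correction: the worry in your final paragraph about the factor $(t\pm r)$ in front of the source terms, and the subsequent region splitting $r\lesssim t$ versus $r\gtrsim t$, is unnecessary. If you carry the expansion of $(t\pm r)\partial_t V^\pm_{ij}$ to completion, the extra copies of $rg_{a,ij}$ and $r\omega_j(\partial_i Z^a p - f_{a,i})$ that come out of $\pm r(\partial_t\mp\partial_r)V^\pm_{ij}$ cancel exactly against the $\pm r$ part of the $(t\pm r)$ prefactors on the residual source terms, leaving precisely $-tg_{a,ij} \mp t\,\omega_j(\partial_i Z^a p - f_{a,i})$ — and $t|g_a|$, $t|f_a|$, $t|\nabla Z^a p|$ already lie in $N_k$ by definition. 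So there is no "complementary region" to treat: the null-frame algebra closes identically, without any geometric bookkeeping involving $\mathcal N_a$ for the $f_a,g_a,p$ pieces. A related small imprecision: the $\partial_j\omega_k = (\delta_{jk}-\omega_j\omega_k)/r$ term you mention does not actually arise if you apply the decomposition $\partial_k = \omega_k\partial_r - (\omega\times\Omega)_k/r$ directly to $\partial_j Z^a H_{ik}$ rather than to the product $\omega_k Z^a H_{ik}$; either route works, but the direct one is what the paper does and is cleaner.
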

\begin{proof}
    The formula \eqref{goodun} is obtained from \eqref{curl-vf}, \eqref{EL-VF} and the decomposition \eqref{decomposition}. We can refer to \cite[Lemma 6.3]{LeiWang} for the detail.
\end{proof}

\begin{lemma}\label{NK0}
	Let $N_k$ be the nonlinear term  \eqref{Nk}. Then we have
	\begin{align}     \label{Nk-est}
			\|N_k\|_{L^2}&\les  E^{1/2}_kE^{1/2}_{[k/2]+3}+E_{[k/2]+2}^{1/2}\XX_k^{1/2}+E_{k-1}^{1/2}\XX_{[k/2]+3}^{1/2}.
	\end{align}
\end{lemma}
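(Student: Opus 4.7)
My plan is to estimate $\|N_k\|_{L^2}$ by bounding each nonlinear piece in \eqref{Nk} separately for every multi-index $|a|\le k-1$: the sources $\|tf_a\|_{L^2}$, $\|tg_a\|_{L^2}$, the constraint term $\|(t+r)\mathcal N_a\|_{L^2}$, and the pressure term $\|t\nab Z^ap\|_{L^2}$. The first step will reduce the pressure to the others: taking $\nab\cdot$ of the first equation in \eqref{EL-VF} and using $\div Z^a u=0$ together with $\nab\cdot Z^a H^T=0$ gives $\De Z^ap=\nab\cdot f_a$, so by the Calder\'on--Zygmund bound on $\nab\De^{-1}\nab\cdot$ (combined with the fact that $t$ is spatially constant) I obtain $\|t\nab Z^ap\|_{L^2}\lesssim \|tf_a\|_{L^2}$. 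Every remaining piece is of the schematic form $\|w\,Z^bV_1\cdot \nab Z^cV_2\|_{L^2}$ with $w\in\{t,t+r\}$, $V_i\in\{u,H,\nab_{t,x}\phi\}$ and $|b|+|c|=|a|\le k-1$, plus the cubic remainder $\RR_{1;a}$.

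The main tool will be the pointwise weight split
\[
w\;\le\;\langle t-r\rangle+2\langle r\rangle,
\]
which lets me absorb the factor of $t$ (or $t+r$) into either the weighted-$L^2$ norm $\XX_j$ (through $\langle t-r\rangle$) or the spatially-weighted $L^\infty$ bound \eqref{ru} (through $\langle r\rangle$). Combined with a derivative split at $[|a|/2]$, this will produce the estimate term by term. When $|b|\le[|a|/2]$, I will apply Sobolev embedding and \eqref{ru} to get $\|Z^bV_1\|_{L^\infty}+\|\langle r\rangle Z^bV_1\|_{L^\infty}\lesssim E_{[k/2]+2}^{1/2}$, and pair this with $\|\nab Z^cV_2\|_{L^2}+\|\langle t-r\rangle\nab Z^cV_2\|_{L^2}\lesssim E_k^{1/2}+\XX_k^{1/2}$, bounding this range by $E_{[k/2]+2}^{1/2}(E_k^{1/2}+\XX_k^{1/2})$. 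When $|b|>[|a|/2]$, so $|c|+3\le[k/2]+3$, I will instead invoke the global decay \eqref{Decay-phi2}, giving $\|t\nab Z^cV_2\|_{L^\infty}\lesssim E_{[k/2]+3}^{1/2}+\XX_{[k/2]+3}^{1/2}$, and pair with $\|Z^bV_1\|_{L^2}\lesssim E_{k-1}^{1/2}$ (since $|b|\le|a|\le k-1$) to produce $E_{k-1}^{1/2}(E_{[k/2]+3}^{1/2}+\XX_{[k/2]+3}^{1/2})$. Summing both ranges and using the trivial inequality $E_{[k/2]+2}^{1/2}E_k^{1/2}+E_{k-1}^{1/2}E_{[k/2]+3}^{1/2}\lesssim E_k^{1/2}E_{[k/2]+3}^{1/2}$ recovers exactly the right-hand side of \eqref{Nk-est}.

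For the cubic remainder $\RR_{1;a}$, after distributing the outer $\d_j$ via Leibniz I will place the two factors with fewest derivatives in $L^\infty$ (using \eqref{sin1}, \eqref{sin2}, and \eqref{Decay-phi2}) and the remaining factor in $L^2$; the two $\sin\phi$ factors supply additional smallness that gets absorbed by $E_{N-2}\lesssim\varepsilon^2$, making this term strictly lower order than \eqref{Nk-est}. The hard part will be the bookkeeping: in every case split, verifying that the $L^2$- (or weighted $L^2$-) factor carries at most $k$ derivatives while the $L^\infty$-factor uses at most $[k/2]+3$ derivatives, and in particular that no bare factor of $t$ is left on either $E_j$ or $\XX_j$---the decomposition of $w$ into $\langle t-r\rangle$ and $\langle r\rangle$ is precisely what makes this distribution possible.
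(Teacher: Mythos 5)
Your proposal is correct and genuinely differs in organization from the paper's own proof. The paper splits the spatial domain into the interior region $r<2\langle t\rangle/3$ (where $\langle t\rangle\approx\langle t-r\rangle$, so the weight is absorbed by the interior decay bounds of Lemma~\ref{lem4.3} and by $\XX$) and the exterior region $r\ge 2\langle t\rangle/3$ (where the weight is comparable to $r$, and the gradient is decomposed as $\nab=\om\d_r-\frac{\om}{r}\times\Om$ so everything reduces to the $\langle r\rangle$-weighted Sobolev bound \eqref{ru}/\eqref{AwayCone}). You instead never split in $x$: you split the weight algebraically, $w\le\langle t-r\rangle+2\langle r\rangle$, everywhere, and feed the $\langle t-r\rangle$ piece to $\XX$ (or to the packaged decay \eqref{Decay-phi2}, which the paper proves from the same Lemma~\ref{lem4.3} plus \eqref{ru}) and the $\langle r\rangle$ piece to \eqref{ru}. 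Both routes use the same underlying estimates, but yours removes the region bookkeeping at the price of invoking the already-derived \eqref{Decay-phi2} rather than the more primitive Lemma~\ref{lem4.3}; the final bound comes out identical. One point you should tighten: in the case $|b|>[|a|/2]$ you state only $\|t\nab Z^cV_2\|_{L^\infty}\lesssim E_{[k/2]+3}^{1/2}+\XX_{[k/2]+3}^{1/2}$ from \eqref{Decay-phi2}, but for $\mathcal N_a$ the weight is $t+r$, and the residual $r$ (equivalently the $\langle r\rangle$ piece of your own split) needs a companion application of \eqref{AwayCone} to $\nab Z^cV_2$, yielding $\|\langle r\rangle\nab Z^cV_2\|_{L^\infty}\lesssim E_{[k/2]+3}^{1/2}$; this is consistent with your framework but should be written out. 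Likewise the schematic $Z^bV_1\cdot\nab Z^cV_2$ should allow the gradient to land on either the low- or high-derivative factor (as happens for $\d_kZ^bu_i\,Z^cH_{kj}$ and for $\nab\cdot(Z^bHZ^cH^T)$), but both configurations are covered by your two-range analysis once the roles of $L^2$/$L^\infty$ are swapped symmetrically.
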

\begin{proof}
	Applying the operator $\nab\cdot$ to the $Z^a u$-equation in \eqref{EL-VF} leads to
	\begin{align*}
		\De Z^a p=\nab \cdot f_a,
	\end{align*}
which gives
\begin{align*}
	\|\nab Z^a p\|_{L^2}\les \|\De^{-1}\nab (\nab\cdot f_a)\|_{L^2}\les \|f_a\|_{L^2}.
\end{align*}
Thus we have
\begin{align*}
	\|N_k\|_{L^2}\les \sum_{|a|\leq k-1}\Big(t\|f_a\|_{L^2}+t\|g_a\|_{L^2}+\|(t+r)\mathcal N_a\|_{L^2}\Big).
\end{align*}

\emph{Case 1): the region $r<2\<t\>/3$.}

According to the definitions of $f_a$, $g_a$ and $\mathcal N_a$, (see \eqref{fa}, \eqref{ga}, \eqref{Na}), we have
\begin{align*}
	&\sum_{|a|\leq k-1} \Big( t\|f_a\|_{L^2(r<2\<t\>/3)}+t\|g_a\|_{L^2(r<2\<t\>/3)}+(t+r)\|\mathcal N_a\|_{L^2(r<2\<t\>/3)}\Big)\\
	&\les \sum_{b+c=a;|a|\leq k-1}\<t\>\big\|(|Z^b u|+|Z^b H|+|\nab Z^b \phi|)(|\nab Z^c u|+|\nab Z^c H|+|\nab^2 Z^c\phi|)\big\|_{L^2(r<2\<t\>/3)}\\
	&\les \big\||Z^{[(k-1)/2]} u|+|Z^{[(k-1)/2]} H|+|\nab Z^{[(k-1)/2]} \phi|\big\|_{\lf}\\
	&\qquad \times\big\|\<t-r\>(|\nab Z^{k-1} u|+|\nab Z^{k-1} H|+|\nab^2 Z^{k-1}\phi|)\big\|_{L^2(r<2\<t\>/3)}\\
	&\quad + \big\||Z^{k-1} u|+|Z^{k-1} H|+|\nab Z^{k-1} \phi|\big\|_{L^2}\\
	&\qquad \times\big\|\<t\>(|\nab Z^{[(k-1)/2]} u|+|\nab Z^{[(k-1)/2]} H|+|\nab^2 Z^{[(k-1)/2]}\phi|)\big\|_{L^\infty(r<2\<t\>/3)}\\
	&\les E_{[(k-1)/2]+2}^{1/2}\XX_k^{1/2}+E_{k-1}^{1/2}
\big(E_{[(k-1)/2]+1}^{1/2}+\XX_{[(k-1)/2]+3}^{1/2}\big).
\end{align*}

\emph{Case 2): $r\geq 2\<t\>/3$.}

In view of the decomposition \eqref{decomposition} of $\nab$, we obtain
\begin{align*}
	&|f_a|+|g_a|+|\mathcal N_a|\\
	&\les \sum_{b+c=a;|a|\leq k-1}\Big(|Z^b u|+|Z^b H|+|\nab Z^b \phi|\Big)\Big(|\d_r Z^c u|+|\d_r Z^c H|+|\d_r\nab Z^c\phi|\Big)\\
	&\quad +\sum_{b+c=a;|a|\leq k-1}\frac{1}{r}\Big(|Z^b u|+|Z^b H|+|\nab Z^b \phi|\Big)\Big(|\Om Z^c u|+|\Om Z^c H|+|\Om\nab Z^c\phi|\Big),
\end{align*}
which together with \eqref{AwayCone} implies
\begin{align*}
	&\sum_{|a|\leq k-1}\Big(t\|f_a\|_{L^2(r\geq 2\<t\>/3)}+t\|g_a\|_{L^2(r\geq 2\<t\>/3)}+(t+r)\|\mathcal N_a\|_{L^2(r\geq 2\<t\>/3)}\Big)\\
	&\les \sum_{b+c=a;|a|\leq k-1}\big\|r(|Z^b u|+|Z^b H|+|\nab Z^b \phi|)(|\d_r Z^c u|+|\d_r Z^c H|+|\d_r\nab Z^c\phi|)\big\|_{L^2(r\geq 2\<t\>/3)}\\
	&\quad +\sum_{b+c=a;|a|\leq k-1}\big\|(|Z^b u|+|Z^b H|+|\nab Z^b \phi|)(|\Om Z^c u|+|\Om Z^c H|+|\Om\nab Z^c\phi|)\big\|_{L^2(r\geq 2\<t\>/3)}\\
	&\les E^{1/2}_k E^{1/2}_{[k/2]+3}.
\end{align*}
This completes the proof of Lemma \ref{NK0}.
\end{proof}

\medskip

Utilize the above three lemmas, we then prove the estimates in \eqref{L2uH}.
\begin{proof}[Proof of the estimates in \eqref{L2uH}]
	Noticing the definition of $\XX_k$, applying $\<t-r\>\les 1+|t-r|$ and \eqref{H-(t-r)}, we have
	\begin{align}       \nonumber
		\XX^{uH}_{k}&=\sum_{|a|\leq k-1} \Big(\|\<t-r\>\nab Z^a u\|_{L^2}^2+\|\<t-r\>\nab Z^a H\|_{L^2}^2\Big)\\\nonumber
		&\les E_{k}+\sum_{|a|\leq k-1} \Big(\||t-r|\nab Z^a u\|_{L^2}^2+\||t-r|\nab Z^a H\|_{L^2}^2\Big)\\  \label{XuH-est}
		&\les E_{k}+\sum_{|a|\leq k-1} \Big(\||t-r|\nab Z^a u\|_{L^2}^2+\||t-r|\nab\cdot Z^a H\|_{L^2}^2\Big)+\sum_{|a|\leq k-1}\QQ_a.
	\end{align}
Thanks to
\begin{align*}
	\nab Z^a u=\frac{1}{2}(\nab Z^a u+\nab \cdot Z^a H\otimes \om)+\frac{1}{2}(\nab Z^a u-\nab \cdot Z^a H\otimes \om)
\end{align*}
and
\begin{align*}
	\nab \cdot Z^a H=\frac{1}{2}(\nab Z^a u+\nab \cdot Z^a H\otimes \om)\om-\frac{1}{2}(\nab Z^a u-\nab \cdot Z^a H\otimes \om)\om,
\end{align*}
we obtain
\begin{align*}
	|t-r|(|\nab Z^a u|+|\nab\cdot Z^a H|)&\les |t+r||\nab Z^a u+\nab \cdot Z^a H\otimes \om|\\
	&\quad +|t-r||\nab Z^a u-\nab \cdot Z^a H\otimes \om|.
\end{align*}
The above estimate together with \eqref{goodun} implies that, for any $|a|\leq k-1$,
\begin{align}   \label{second-term}
	\|(t-r)\nab Z^a u\|_{L^2}^2+\|(t-r)\nab\cdot Z^a H\|_{L^2}^2\les E_k+\|N_k\|_{L^2}^2.
\end{align}
Collecting the estimates \eqref{XuH-est} and \eqref{second-term},  applying \eqref{Nk-est} and \eqref{Qa-est}, we arrive at
\begin{equation}   \label{X-k}
\begin{aligned}
	\XX^{uH}_{k}&\les E_{k}+\|N_k\|_{L^2}^2+\sum_{|a|\leq k-1}\QQ_a\\
	&\les  E_{k}+E_kE_{[k/2]+3}+E_{[k/2]+2}\XX_k+E_{k-1}\XX_{[k/2]+3}\\
	&\quad + \XX^H_{k} E^{1/2}_{[k/2]+3}+E_{k-1}E^{1/2}_{[k/2]+3}+(\XX^{H}_{k})^{1/2} E^{1/2}_{k-1}(\XX^{H}_{[k/2]+3})^{1/2}.
\end{aligned}	
\end{equation}

Hence, for the case $k=N-2\geq 7$, by $E_{N-2}\les \ep^2$ and \eqref{XX-bound}, the bound \eqref{X-k} yields
\begin{align*}
	\XX^{uH}_{N-2}\les E_{N-2}+\ep(\XX^{uH}_{N-2}+\XX^\phi_{N-2})\les E_{N-2}+\ep\XX^{uH}_{N-2}.
\end{align*}
Then we get
\begin{align}   \label{XXka-2}
	\XX^{uH}_{N-2}\les E_{N-2}.
\end{align}
For the case $k=N$, using \eqref{XXka-2}, $E_{N-2}\les \ep^2$ and \eqref{XX-bound}, we deduce that
\begin{align*}
	\XX^{uH}_{N} \les E_N+(E_N+\XX^{uH}_{N}+\XX^\phi_{N})\ep^2+\ep\XX^{uH}_{N}+\ep E_{N-1}\les E_N+\ep \XX^{uH}_{N},
\end{align*}
which gives
\begin{equation*}
	\XX^{uH}_{N}\les E_N.
\end{equation*}
Therefore the desired bounds in \eqref{L2uH} are obtained.
\end{proof}

\bigskip
\section{Higher-order energy estimates}\label{sec-HighEnergy}

This section is devoted to the higher-order energy estimate.
Let $N\geq 9$. For any $|a|\leq N$, we recall the following modified energy functional
\begin{align*}
	\bE_a(t):&=E^{uH}_a(t)+E^{\phi}_{a}(t)+\EE^{\phi}_{a}(t)+\bR^{\phi}_{a;1}(t)+\bR^{\phi}_{a;2}(t)\\
	:&=\frac{1}{2}\int |Z^a u|^2+|Z^a H|^2\ dx
	+\frac{1}{2}\int |D_t Z^a \phi|^2+ |\nab Z^a \phi|^2\ dx\\
	&\quad  +\int \frac{1}{2} |Z^a u\cdot\nab\phi|^2+D_t Z^a\phi (Z^au\cdot\nab\phi)\ dx\\
	&\quad -\frac{1}{2} \int \sin^2 \phi_2( |D_t Z^a \phi_1|^2+|\nab Z^a \phi_1|^2)\ dx\\
	&\quad -\int \frac{1}{2}\sin^2\phi_2|Z^au\cdot \nab \phi_1|^2+\sin^2\phi_2 D_t Z^a\phi_1 Z^a u\cdot\nab\phi_1\ dx ,
\end{align*}
with $|Z^a H|^2=\sum_{i,j}Z^a H_{ij} Z^aH_{ij}$. Denote
\[  \bE_m(t)=\sum_{|a|\leq m}\bE_a(t).         \]
For sake of convenience, we also define the energy functional of $Z^a \phi$ as
\begin{equation*}
	\bE^{\phi}_a(t):=E^{\phi}_{a}(t)+\EE^{\phi}_{a}(t)+\bR^{\phi}_{a;1}(t)+\bR^{\phi}_{a;2}(t).
\end{equation*}

Then the high order energy estimates are as follows:
\begin{prop}     \label{Prop_va}
	Let $N\geq 9$. Assume that $(u,H,\phi)$ is the solution of \eqref{EL-elas-1} satisfying \eqref{MainAss_dini} and $E^{1/2}_{N-2}\les \ep$. Then for any $t\in[0,T]$,
	we have the following properties:
	
	i) equivalence relation: for any $0\leq m\leq N$
	\begin{equation}  \label{eq-relation}
		\bE_m(t)\approx_\ep E_m(t).
	\end{equation}
	
	ii) energy estimate:
	\begin{equation}       \label{Ev}
		\frac{d}{dt}\bE_N(t)\lesssim \<t\>^{-1}\bE_N E^{1/2}_{N-2}(1+E_{N-2}).
	\end{equation}
\end{prop}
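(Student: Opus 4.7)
The plan is to handle the two claims in turn.

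\textbf{Part (i), equivalence \eqref{eq-relation}.} I would first complete the square: the first four summands of $\bE_a$ rewrite as
\[
\tfrac12\int |Z^a u|^2 + |Z^a H|^2\,dx + \tfrac12\int |D_t Z^a\phi + Z^a u\cdot\nabla\phi|^2 + |\nabla Z^a\phi|^2\,dx,
\]
while the last two summands form the same expression confined to the $\phi_1$-component and weighted by $-\sin^2\phi_2$. Each deviation from $E_a$ is then pointwise bounded by $(\|\nabla\phi\|_{L^\infty}+\|\sin\phi_2\|_{L^\infty})\,(|Z^a u|^2+|D_t Z^a\phi|^2+|\nabla Z^a\phi|^2)$; by \eqref{Decay-phi2}, \eqref{sin2}, Proposition~\ref{XX-prop} and $E_{N-2}^{1/2}\lesssim\epsilon$, both $L^\infty$ factors are $O(\epsilon)$, so summing over $|a|\leq m$ yields \eqref{eq-relation}.

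\textbf{Part (ii), setup of the identity.} Next I would differentiate $\bE_a$ in time and substitute from \eqref{EL-VF}. For the $(u,H)$-block, using $\nabla\cdot Z^a u=0$ the pressure drops out and integration by parts cancels the linear cross-term $\int Z^a u\cdot\nabla\!\cdot\! Z^a H + Z^a H:\nabla Z^a u\,dx$, leaving $\int Z^a u\cdot f_a+Z^a H:g_a\,dx$. For the $\phi$-block, $\nabla\cdot u=0$ gives
\[
\frac{d}{dt}\int\tfrac12|D_t Z^a\phi|^2\,dx=\int D_t Z^a\phi\cdot(\Delta Z^a\phi+h_a)\,dx,
\]
and integration by parts against the Laplacian, combined with the time derivative of $\tfrac12\int|\nabla Z^a\phi|^2\,dx$, produces a commutator $-\int\nabla Z^a\phi\cdot(\nabla u\cdot\nabla Z^a\phi)\,dx$ plus $\int D_t Z^a\phi\cdot h_a\,dx$. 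The modifications are designed precisely so that $\frac{d}{dt}\EE^\phi_a$ absorbs the $-D_t(Z^a u\cdot\nabla\phi)$ piece of \eqref{ha}, while $\frac{d}{dt}(\bR^\phi_{a;1}+\bR^\phi_{a;2})$ absorbs the $\RR_{2;a}$ contribution (the $\sin 2\phi_2$- and $\tan\phi_2$-weighted quadratic terms), turning each into cubic or higher remainders.

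\textbf{Nonlinear bounds.} All surviving integrals take the schematic form (low-order factor)$\times$(high-order factor). I would place the top-order factor in $L^2$ (bounded by $\bE_N^{1/2}\approx E_N^{1/2}$), a lowest-order factor in $L^\infty$ via \eqref{Decay-phi2}, \eqref{omuH} or \eqref{sin2} (gaining $\langle t\rangle^{-1}E_{N-2}^{1/2}$), and intermediate factors in $L^2$ bounded by $E_{N-2}^{1/2}$. Near the light cone the decomposition $\nabla=\omega\partial_r-\omega\times\Omega/r$ is decisive: the radial piece pairs with $\omega\cdot u$ or $\omega_k H_{kj}$, whose improved decay $\langle t\rangle^{-3/2}$ from \eqref{omuH} supplies the required $\langle t\rangle^{-1}$ rate, while the angular piece carries the $r^{-1}$ weight absorbed by Proposition~\ref{XX-prop}. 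Summing over $|a|\leq N$ and using \eqref{eq-relation} delivers \eqref{Ev}.

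\textbf{Main obstacle.} The technical heart is verifying the cancellations of the previous paragraph for the second-order material-derivative contributions to $h_a$. The pieces $\partial_t(Z^b u\cdot\nabla Z^c\phi)$ and $Z^b u\cdot\nabla\partial_t Z^c\phi$ in \eqref{ha} are not individually integrable at the $\langle t\rangle^{-1}$ rate; they must either combine into $\frac{d}{dt}\EE^\phi_a$ via the Leibniz rule so that the leading quadratic cross-terms annihilate, or be split by $\nabla=\omega\partial_r-\omega\times\Omega/r$ so that the radial contribution exposes $\omega\cdot u$ (gaining extra decay beyond the generic wave rate) and the angular contribution is controlled by the weighted energy. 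Equally delicate is avoiding any loss of derivatives on $Z^a\phi$ when integrating by parts, which is exactly why the energy has been set up in the Lagrangian-metric form above. Once these structural cancellations are tracked and the weighted estimates of Section~\ref{sec-L2} invoked, the product estimate closes and yields \eqref{Ev}.
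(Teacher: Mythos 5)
Your part (i) matches the paper: after completing the square, every deviation from $E_a$ carries a $\|\nabla\phi\|_{L^\infty}$ or $\|\sin^2\phi_2\|_{L^\infty}$ prefactor that is $O(\epsilon)$ under $E^{1/2}_{N-2}\lesssim\epsilon$, and the equivalence follows. Your setup of the energy identities in part (ii) is also correct, as is the role you assign to $\EE^\phi_a$: it absorbs the $-D_t(Z^a u\cdot\nabla\phi)$ piece of $h_a$, and after integrating by parts the surviving term $\int \Delta Z^a\phi\,(Z^a u\cdot\nabla\phi)\,dx$ cancels the top-order contribution to $\int Z^a u\cdot f_a$ coming from the $-\div(\nabla\phi\odot\nabla\phi)$ stress.

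However, your explanation of what $\bR^\phi_{a;1}$ and $\bR^\phi_{a;2}$ are for is wrong, and it is a genuine gap rather than an imprecision. You claim these corrections ``absorb the $\RR_{2;a}$ contribution.'' But $\RR_{2;a}$ (the $\tan\phi_2$- and $\sin 2\phi_2$-weighted terms in the $\phi$-equation) is already cubic: it carries a small prefactor of order $\phi_2$, and in the paper it is estimated directly in $L^2$ as part of $\overline h_a$ (see the bound on $\|\overline h_a\|_{L^2}$), with no cancellation needed. The actual reason the $\bR^\phi$ corrections are introduced is a derivative-loss term hidden in $\RR_{1;a}$, the $\sin^2\phi_2$-weighted piece of the elastic stress in the \emph{$u$-equation}. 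After applying $Z^a$ and pairing with $Z^a u$, the term $I_{3,1}=\int Z^a u\,\sin^2\phi_2\,\nabla\phi_1\,\Delta Z^a\phi_1\,dx$ emerges, which carries two derivatives on $Z^a\phi_1$ and cannot be closed by $E_a$ alone. The corrections $\bR^\phi_{a;1}+\bR^\phi_{a;2}$ are precisely the $(-\sin^2\phi_2)$-weighted copy of $E^\phi_a+\EE^\phi_a$ restricted to the $\phi_1$-component; upon differentiating in $t$ and substituting the $Z^a\phi_1$-equation, they produce $-\int\sin^2\phi_2\,\Delta Z^a\phi_1\,(Z^a u\cdot\nabla\phi_1)\,dx$, which cancels $I_{3,1}$. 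Without identifying this second $u$-$\phi$ cross-cancellation (parallel to the $\EE^\phi_a$/stress cancellation but one order higher in $\phi$), the higher-order estimate will not close: your plan leaves $I_{3,1}$ unaccounted for and instead spends the $\bR^\phi$ corrections on a term that needs no help.
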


In what follows, we are aimed at the proof of Proposition \ref{Prop_va}. To start with, we show the equivalence \eqref{eq-relation}.

\vskip 0.5cm
\begin{proof}[Proof of equivalence \eqref{eq-relation}]
	\
	
	In view of  the assumption $E^{1/2}_{N-2}\les \ep$, we have
	\begin{equation*}
		\|\nab\phi\|_{\lf}+\|\sin^2\phi_2\|_{\lf}\les \ep.
	\end{equation*}
    Then by the expression of $\EE^\phi_{a;2}(t)$, $\bR^{\phi}_{a;1}(t)$ and $\bR^{\phi}_{a;2}(t)$, we obtain
	\begin{equation*}
		\EE^\phi_{a}(t)+\bR^{\phi}_{a;1}(t)+\bR^{\phi}_{a;2}(t)\les \ep E_a(t),
	\end{equation*}
which yields the equivalence \eqref{eq-relation}.
\end{proof}	
	
	\medskip
	
Next we prove the energy estimate \eqref{Ev}. This bound is obtained by the following two energy estimates of $(u,H,\phi)$: for any $|a|\leq N$,
\begin{equation}\label{Eu-2}
	\begin{aligned}
		\frac{d}{dt}E^{uH}_a(t)&\leq  C \<t\>^{-1}E_N E^{1/2}_{N-2}(1+E_{N-2})\\
		&\quad -\int Z^a u\cdot  ( \nab \phi\De Z^a\phi-\nab\phi_1 \De Z^a\phi_1 \sin^2\phi_2)\ dx,
	\end{aligned}
\end{equation}
and
\begin{equation} \label{Ephi}
	\begin{aligned}
		\frac{d}{dt}\bE^\phi_{a}(t)
		&\leq C \<t\>^{-1}E_N E^{1/2}_{N-2}(1+E_{N-2})
		\\
		&\quad +\int  Z^a u\cdot  ( \nab \phi\De Z^a\phi-\nab\phi_1 \De Z^a\phi_1 \sin^2\phi_2)\ dx .
	\end{aligned}
\end{equation}
Thus we shall  prove the bounds \eqref{Eu-2} and \eqref{Ephi} separately in the remainder part.

\vskip 0.5cm
\begin{proof}[Proof of the energy estimate \eqref{Eu-2}]
	\
	
	By the first two equations in (\ref{EL-VF}) we calculate
	\begin{align*}
		&\frac{1}{2}\frac{d}{dt}\int  (|Z^a u|^2+| Z^a H|^2)\ dx
		\\
		&=\int  Z^a u\ (\nab\cdot Z^aH-\nab Z^a p+f_a)
		+ Z^a H_{ij}\ (\d_j Z^a u_i+g_{a,ij})\ dx\\
		&=\int (Z^au\cdot f_a+ Z^aH_{ij} g_{a,ij})\ dx,
	\end{align*}
which combined with \eqref{fa} and \eqref{ga} yields
\begin{align*}
	\frac{d}{dt}E^{uH}_a(t)
	&=\int (Z^a u\cdot f_a+ Z^a H\cdot  g_a)\ dx\\
	& =\sum_{b+c=a}C_a^b \int  Z^a u \big(-Z^b u\cdot \nab Z^c u +\nab\cdot(Z^b H Z^c H^T)\big)\ dx\\
	& \quad 	- \sum_{b+c=a}C_a^b\int  Z^a u\ \d_j  (\nab Z^b\phi\cdot \d_j Z^c\phi)\ dx+\int Z^a u\  \mathcal R_{1;a}\ dx\\
	&\quad + \sum_{b+c=a}C_a^b \int  Z^a H_{ij}\cdot \big(-Z^b u\cdot \nab Z^c H_{ij}+\d_k Z^b u_i Z^cH_{kj}\big)\ dx\\
	:&=I_1+I_2+I_3+I_4.
\end{align*}
We then proceed to deal with the nonlinear terms $I_1-I_4$.

\medskip

\emph{1) Estimates of $I_1$ and $I_4$.}

We use $\div u=0$ and $\d_j H_{jk}=0$ to rearrange the $I_1$ and $I_4$ as
\begin{align} \nonumber
	I_1+I_4&= \sum_{b+c=a;|c|<| a|}C_a^b \int  Z^a u_i \big(-Z^b u\cdot \nab Z^c u_i + \d_j Z^c H_{ik} Z^b H_{jk}\big)\ dx\\\nonumber
	&\quad +\sum_{b+c=a:|c|<|a|}C_a^b \int  Z^a H_{ij}\cdot \big(-Z^b u\cdot \nab Z^c H_{ij}+\d_k Z^c u_i Z^b H_{kj}\big)\ dx\\\nonumber
	&\quad +\int  \d_j \big(Z^a u_i Z^a H_{ik} H_{jk}-\frac{1}{2}(u_j|Z^au|^2+u_j |Z^a H|^2)\big)\ dx\\ \label{I1I4}
	:&= I_{1,1}+I_{1,2}+I_{1,3}.
\end{align}
The last integral $I_{1,3}$ vanishes.
Using \eqref{Decay-phi2} and Proposition \ref{XX-prop}, we control the first two terms by
\begin{align*}
	I_{1,1}+I_{1,2}&\les E_N \big(\|Z^{ [N/2]+1}u\|_{\lf}+\|Z^{ [N/2]+1}H\|_{\lf}\big)\\
	&\les E_N \<t\>^{-1}(E^{1/2}_{[N/2]+3}+\XX^{1/2}_{[N/2]+3})\les  \<t\>^{-1}E_N E^{1/2}_{N-2}.
\end{align*}

\medskip
\emph{2) Estimate of $I_2$.}

For the integral $I_2$, by integration by parts, we have
\begin{align} \nonumber
	I_2&=- \sum_{b+c=a}C_a^b\int  Z^a u\  (\nab \d_j  Z^b\phi\cdot \d_j Z^c\phi+\nab Z^b\phi \De Z^c\phi)\ dx\\ \nonumber
	&=- \frac{1}{2}\sum_{b+c=a}C_a^b\int Z^a u\cdot  \nab( \d_j  Z^b\phi\cdot \d_j Z^c\phi)\ dx\\  \nonumber
	&\quad - \sum_{b+c=a;|c|<|a|}C_a^b\int  Z^a u\cdot   \nab  Z^b\phi\De Z^c\phi\ dx
	- \int  Z^a u\cdot   \nab \phi\De Z^a\phi\ dx\\   \label{I33}
	:&= I_{2,1}+I_{2,2}+I_{2,3}.
\end{align}
The integral $I_{2,3}$ can be cancelled by $J_{1,2}$ in \eqref{J11} later, here we retain it temporarily. Apply integration by parts and $\div Z^a u=0$, the term $I_{2,1}$ vanishes.
For the second integral $I_{2,2}$, we obtain from \eqref{Decay-phi2} and Proposition \ref{XX-prop} that
\begin{align*}
	I_{2,2}&\lesssim E^{1/2}_N \big(\|\nab Z^{[N/2]}\phi\|_{\lf}\|\De Z^{N-1}\phi\|_{L^2}+\|\nab Z^{N}\phi\|_{L^2}\|\De Z^{[N/2]}\phi\|_{\lf}\big)\\
	&\les \<t\>^{-1}E_{N}(E^{1/2}_{[N/2]+3}+\XX^{1/2}_{[N/2]+3})\les \<t\>^{-1}E_{N}E^{1/2}_{N-2}.
\end{align*}

\medskip
\emph{3) Estimate of $I_3$.}

The term  $I_3$ is essentially a high order term. By the expression of $\mathcal R_{1;a}$ in \eqref{R1a}, we deduce
\begin{align}\nonumber
	I_3 &\leq \int  Z^a u \sin^2 \phi_2 \nab \phi_1 \De Z^a \phi_1 \ dx+\int  Z^a u \sin^2 \phi_2 \nab \d_j Z^a \phi_1 \d_j \phi_1 \ dx \\\nonumber
	&\quad +C E^{1/2}_{N}\sum_{|b+c+e|\leq |a|+1;|b|,|c|,|e|\leq |a|}\| Z^b\sin^2\phi \nab Z^c\phi\nab Z^e\phi)\|_{L^2}\\ \label{I41}
	:&=I_{3,1}+I_{3,2}+I_{3,3}.
\end{align}
We retain the integral $I_{3,1}$, which will be cancelled by $J_{4,2}$ in \eqref{J63}. For $I_{3,2}$, it follows from $\div Z^a u=0$, \eqref{Zaphi} and \eqref{Decay-phi2} that
\begin{align*}
	I_{3,2}&= -\int Z^a u\ \d_j Z^a \phi_1 \nab(\sin^2 \phi_2  \d_j \phi_1) \ dx \\
	&\les E_N \big(\|\sin^2\phi_2\|_{\lf}\|\nab^2\phi_1\|_{\lf}+\|\nab\phi\|_{\lf}^2\big)\\
	&\les \<t\>^{-5/3+2\de}E_N E_{N-2}(1+E^{1/2}_{N-2}).
\end{align*}
Similarly, the last term $I_{3,3}$ can be bounded by
\begin{align*}
	I_{3,3}&\les E^{1/2}_{N}\Big(\|Z^{ [N/2]}\phi\|_{\lf}^2\|\nab Z^{ [N/2]}\phi\|_{\lf}\|\nab Z^{ N}\phi\|_{L^2}\\
	&\quad +\|Z^{N }\phi\|_{L^{6}}\|Z^{ [N/2]}\phi\|_{\lf}\|\nab Z^{[N/2]}\phi\|_{L^3}\|\nab Z^{ [N/2]}\phi\|_{\lf}\Big)\\
	&\les  E_NE^{1/2}_{N-2}(1+E_{N-2})\<t\>^{-5/3+2\de}.
\end{align*}

Collecting the above estimates, we obtain the energy estimate \eqref{Eu-2}.
\end{proof}

\begin{proof}[Proof of the energy estimate \eqref{Ephi}]
	\
	
	The proof of estimate \eqref{Ephi} is divided into two steps. The first step is devoted to the energy estimates of $E^\phi_{a;1}$ and $\EE^\phi_{a;2}$, and the second step mainly focuses on the estimates of $\bR^\phi_{a;1}$ and $\bR^\phi_{a;2}$.
	
	\emph{Step 1: Energy estimates of $E^\phi_{a}$ and $\EE^\phi_{a}$:}
	\begin{equation}  \label{step1}
		\begin{aligned}
			\frac{d}{dt}(E^\phi_{a}(t)+\EE^\phi_{a}(t))
			\leq C\<t\>^{-1}E_NE^{1/2}_{N-2}(1+E_{N-2})+\int \De Z^a\phi Z^a u\cdot\nab\phi\ dx.
		\end{aligned}
	\end{equation}
	
	By the $Z^a\phi$-equation in (\ref{EL-VF}) and $\div u=0$, we calculate
	\begin{align*}
		&\frac{1}{2}\frac{d}{dt}\int  |D_t Z^a \phi|^2+|\nab Z^a \phi|^2\ dx\\
		&=\int D_t Z^a\phi D_t^2 Z^a\phi +\nab Z^a\phi \nab D_t Z^a\phi-\nab Z^a\phi \nab u\cdot\nab Z^a\phi\ dx\\
		&=-\int_{\R^3} D_t Z^a\phi D_t (Z^au \cdot \nab\phi)\ dx+\int D_t Z^a\phi\cdot \overline{h}_a\ dx\\
		&\quad -\int \nab Z^a\phi \nab u\cdot\nab Z^a\phi\ dx\\
		:&=J_1+J_2+J_3,
	\end{align*}
where $\overline{h}_a$ is the nonlinear terms in \eqref{ha} except $-D_t(Z^au\cdot\nab\phi)$, given by
\begin{equation}  \label{ha-2}
\begin{aligned}
	\overline{h}_a:&=- \sum_{b+c=a;|c|<|a|}C_a^b Z^b u\cdot \nab \d_t Z^c\phi+\sum_{b+c=a;|b|,|c|<|a|}C_a^b\d_t (Z^b u\cdot \nab Z^c\phi)  \\
	&\quad - \sum_{b+c+e=a;|c|,|e|<|a|}C_a^{b,c}Z^b u\cdot \nab (Z^c u\cdot \nab Z^e\phi)+\RR_{2;a}.
\end{aligned}	
\end{equation}

	\medskip
\emph{1). Estimate of $J_1$.}

By integration by parts, we have from the $Z^a\phi$-equation in \eqref{EL-VF} that
\begin{align}\nonumber
	J_1&= -\frac{d}{dt}\int D_t Z^a\phi (Z^au\cdot\nab\phi)dx+\int D^2_t Z^a\phi\ Z^a u\cdot \nab \phi\ dx\\\nonumber
	&=-\frac{d}{dt}\int D_t Z^a\phi (Z^au\cdot\nab\phi)dx+\int  \De Z^a\phi\ Z^a u\cdot \nab \phi\ dx \\\nonumber
	&\quad -\int D_t(Z^a u\cdot \nab\phi) Z^au\cdot\nab\phi\ dx+\int \overline{h}_a Z^au\cdot\nab\phi\ dx\\  \label{J11}
	:&=J_{1,1}+J_{1,2}+J_{1,3}+J_{1,4}.
\end{align}
The second integral $J_{1,2}$ is cancelled by $I_{2,3}$ in \eqref{I33}. The first and third integrals $J_{1,1}, J_{1,3}$ are rewritten as
\begin{align*}
	J_{1,1}+J_{1,3}=&-\frac{d}{dt}\int D_t Z^a\phi (Z^au\cdot\nab\phi)dx -\frac{1}{2}\frac{d}{dt}\int |Z^a u\cdot\nab\phi|^2\ dx=-\frac{d}{dt}\EE^\phi_{a}(t).
\end{align*}
For the forth integral $J_{1,4}$, we bound $\overline{h}_a$ first.
It follows from \eqref{Decay-phi2} and Proposition \ref{XX-prop} that
\begin{align}   \nonumber
	\|\overline{h}_a\|_{L^2}&\les E^{1/2}_{|a|} \big(\|Z^{[|a|/2]+1}u\|_{\lf}+\|\nab Z^{[|a|/2]+1}\phi\|_{\lf}\big)\\\nonumber
&\quad \times\big(1+\|Z^{[|a|/2]+1}u\|_{\lf}+\|\nab Z^{[|a|/2]+1}\phi\|_{\lf}\big)\\\nonumber
	&\quad +  \|Z^{[|a|/2]}(\tan\phi,\sin 2\phi)\|_{\lf}\ E^{1/2}_{|a|}\ \|D_tZ^{[|a|/2]}\phi+\nab Z^{[|a|/2]}\phi\|_{\lf}\\\nonumber
	&\quad + \|Z^{|a|}(\tan\phi,\sin 2\phi)\|_{L^6}\ \|D_tZ^{[|a|/2]}\phi+\nab Z^{[|a|/2]}\phi\|_{L^6}^2\\\nonumber
	&\les E^{1/2}_{|a|} \<t\>^{-1}E^{1/2}_{[|a|/2]+3}\big(1+E^{1/2}_{[|a|/2]+3}\big)+\<t\>^{-1/3+\de}E^{1/2}_{[|a|/2]+2}E^{1/2}_{|a|} \<t\>^{-1}E^{1/2}_{[|a|/2]+2}\\\nonumber
	&\quad +E^{1/2}_{|a|} \<t\>^{-4/3}E_{[|a|/2]+2}\\\label{ha-est}
	&\les  \<t\>^{-1}E^{1/2}_{|a|}E^{1/2}_{[|a|/2]+3}(1+E^{1/2}_{[|a|/2]+3}).
\end{align}
Then we estimate $J_{1,4}$ by
\begin{align*}
	J_{1,4}&\les \sum_{|a|\leq N}\|\overline{h}_a\|_{L^2}\|Z^N u\|_{L^2}\|\nab\phi\|_{\lf}\\
	&\les \<t\>^{-1}E^{1/2}_NE^{1/2}_{N-2}(1+E^{1/2}_{N-2})E^{1/2}_N \<t\>^{-1}E^{1/2}_{N-2}\\
	&\les \<t\>^{-2}E_N E^{1/2}_{N-2}(1+E_{N-2}).
\end{align*}

\medskip
\emph{2). Estimates of $J_2$ and $J_3$.}

Noticing the estimate \eqref{ha-est}, we bound $J_2$ as
\begin{align*}
	J_2\les \|D_t Z^a\phi\|_{L^2}\|\overline{h}_a\|_{L^2}\les \<t\>^{-1}E_NE^{1/2}_{N-2}(1+E^{1/2}_{N-2})
\end{align*}
From \eqref{Decay-phi2} and Proposition \ref{XX-prop}, it is easy to check that
\begin{align*}
	J_3=-\int \nab Z^a\phi \nab u\cdot \nab Z^a\phi\ dx\les E_N\|\nab u\|_{\lf}\les \<t\>^{-1}E_N E^{1/2}_{N-2}.
\end{align*}

Collecting the above bounds, we obtain the estimate \eqref{step1}.

\emph{Step 2: Energy estimates of $\bR^\phi_{a;1}$ and  $\bR^\phi_{a;2}$:}
\begin{equation}\label{step2}
\begin{aligned}
	\frac{d}{dt}\big(\bR^\phi_{a;1}(t)+\bR^\phi_{a;2}(t)\big)	&\leq C \<t\>^{-4/3+\de}E_N E^{1/2}_{N-2}(1+E^2_{N-2}) \\
	&\quad -\int  \sin^2\phi_2 \De Z^a\phi_1 Z^au\cdot \nab\phi_1 \ dx.
\end{aligned}	
\end{equation}

In a similar way as  \emph{Step 1}, we arrive at
\begin{align*}
	&\frac{d}{dt} \bR^\phi_{a;1}(t)= \frac{1}{2} \frac{d}{dt}\int  (-\sin^2 \phi_2)( |D_t Z^a \phi_1|^2+|\nab Z^a \phi_1|^2)\ dx\\
	&=\int (-\sin^2 \phi_2) \Big(D_t Z^a\phi_1 D_t^2 Z^a\phi_1 +\nab Z^a\phi_1 \nab D_t Z^a\phi_1-\nab Z^a\phi_1 \nab u\cdot\nab Z^a\phi_1\Big)\\
	&\quad  -\sin\phi_2 \cos\phi_2 D_t\phi_2 \big(|D_t Z^a \phi_1|^2+|\nab Z^a \phi_1|^2\big) \ dx.
\end{align*}
It follows from the integration by parts and $Z^a\phi_1$-equation in \eqref{EL-VF} that
\begin{align*}
	\frac{d}{dt}\bR^\phi_{a;1}(t)&=\int \sin^2 \phi_2 D_t Z^a\phi_1\cdot D_t(Z^a u\cdot\nab\phi_1)\ dx-\int \sin^2\phi_2 D_t Z^a\phi_1\ \overline{h}_{a,1}\ dx\\
	&\quad  +\int \nab(\sin^2\phi_2)D_t Z^a\phi_1 \nab Z^a\phi_1+ \sin^2 \phi_2 \nab Z^a\phi_1 \nab u\cdot\nab Z^a\phi_1 \ dx \\
	&\quad -\int  \sin\phi_2 \cos\phi_2 D_t\phi_2 (|D_t Z^a \phi_1|^2+|\nab Z^a \phi_1|^2) \ dx\\
	:&= J_4+J_5+J_6+J_7,
\end{align*}
where the nonlinear term $\overline{h}_{a,1}$ is the first component of the vector $\overline{h}_a:=(\overline{h}_{a,1},\overline{h}_{a,2})$ in  \eqref{ha-2}.

Using \eqref{Zaphi}, Proposition \ref{XX-prop} and \eqref{ha-est}, the term $J_5$ is bounded by
\begin{align*}
	J_5&\les \|\sin\phi_2\|^2_{\lf}\|D_t Z^a\phi_1\|_{L^2}\|\overline{h}_{a,1}\|_{L^2}\\
	&\les \<t\>^{-2/3+2\de}E_{N-2}E^{1/2}_{N}\<t\>^{-1}E^{1/2}_NE^{1/2}_{N-2}(1+E^{1/2}_{N-2})\\
	&\les \<t\>^{-5/3+2\de}E_N E^{1/2}_{N-2}(1+E^{3/2}_{N-2}).
\end{align*}
Similarly, from \eqref{Decay-phi2}, \eqref{Zaphi} and Proposition \eqref{XX-prop}, we deduce
\begin{align*}
	J_6+J_7& \les \|\sin\phi_2\|_{\lf}\||D_t\phi_2|+|\nab\phi_2|\|_{\lf}E_N+\|\sin\phi_2\|_{\lf}^2\|\nab u\|_{\lf}E_N\\
	&\les \<t\>^{-4/3+\de}E_N E_{N-2}.
\end{align*}

\medskip
Next, we consider the term $J_4$.
By integration by parts and \eqref{ha}, we obtain
\begin{align} \nonumber
	J_4&=\frac{d}{dt}\int \sin^2\phi_2 D_t Z^a\phi_1 Z^a u\cdot\nab\phi_1\ dx -\int  \sin^2\phi_2 \De Z^a\phi_1 Z^au\cdot \nab\phi_1 \ dx\\ \nonumber
	&\quad  +\int  \sin^2\phi_2 D_t(Z^a u\cdot\nab\phi_1) Z^au\cdot \nab\phi_1 \ dx -\int  \sin^2\phi_2 \overline{h}_{a1}   Z^au\cdot \nab\phi_1 \ dx\\ \nonumber
	&\quad -\int D_t(  \sin^2\phi_2 )D_t Z^a\phi_1 Z^au\cdot \nab\phi_1 \ dx \\   \label{J63}
	:&= J_{4,1}+J_{4,2}+J_{4,3}+J_{4,4}+J_{4,5}.
\end{align}
The second term $J_{4,2}$ is cancelled by $I_{3,1}$ in \eqref{I41}. By integration by parts, the third term $J_{4,3}$ is bounded as
\begin{align*}
	J_{4,3}&=\int \sin^2\phi_2 \frac{1}{2}D_t |Z^a u\cdot \nab\phi_1|^2\ dx\\
	&= \frac{1}{2}\frac{d}{dt}\int \sin^2\phi_2 |Z^a u\cdot \nab\phi_1|^2\ dx-\frac{1}{2}\int D_t (\sin^2\phi_2) |Z^a u\cdot \nab\phi_1|^2\ dx\\
	&\leq \frac{1}{2}\frac{d}{dt}\int \sin^2\phi_2 |Z^a u\cdot \nab\phi_1|^2\ dx+C\|\sin\phi_2\|_{\lf}\|D_t\phi_2\|_{\lf}E_N \|\nab\phi_1\|_{\lf}^2\\
	&\leq \frac{1}{2}\frac{d}{dt}\int \sin^2\phi_2 |Z^a u\cdot \nab\phi_1|^2\ dx+C\<t\>^{-10/3+\de}E_N E^2_{N-2}.
\end{align*}
The last two terms are estimated by \eqref{ha-est}, \eqref{Zaphi}, \eqref{Decay-phi2} and Proposition \ref{XX-prop}
\begin{align*}
	J_{4,4}+J_{4,5}&\les \|\sin\phi_2\|_{\lf}^2 \<t\>^{-1}E^{1/2}_N  E^{1/2}_{N-2}(1+E^{1/2}_{N-2})E^{1/2}_N \|\nab\phi_1\|_{\lf}\\
	&\quad + \|\sin\phi_2\|_{\lf}\|D_t\phi_2\|_{\lf}E_N \|\nab\phi_1\|_{\lf}\\
	&\les \<t\>^{-8/3+2\de}E_N E_{N-2}^2(1+E^{1/2}_{N-2})+\<t\>^{-7/3+\de}E_N E^{3/2}_{N-2}\\
	&\les \<t\>^{-7/3+\de}E_N E^{1/2}_{N-2}(1+E_{N-2}^2).
\end{align*}

Summing up the above estimates implies \eqref{step2}.
\end{proof}

\bigskip
\section{Lower-order energy estimates}\label{sec-LowEnergy}

This section is devoted to the lower-order energy estimate. We first recall the modified energies
\begin{align*}
	E^{uH}_a(t):& =\frac{1}{2}(\lV  Z^a u\rV_{L^2}^2+\| Z^aH\|_{L^2}^2),\\
	E^\phi_{a}+\EE^\phi_{a}:&=\frac{1}{2}(\|D_t Z^a \phi\|_{L^2}^2+ \|\nab Z^a \phi\|_{L^2}^2)+\int \frac{1}{2} |Z^a u\cdot\nab\phi|^2+D_t Z^a\phi (Z^au\cdot\nab\phi)\ dx.
\end{align*}
The modified energy $\EE_a:=E^{uH}_a+E^\phi_a+\EE^\phi_a$ and $\EE_N:=\sum_{|a|\leq N}\EE_a$.

Then we have the following proposition.
\begin{prop}     \label{LE-prop}
	Let $N\geq 9$. Assume that $(u,H,\phi)$ is the solution of \eqref{EL-elas-1} satisfying \eqref{MainAss_dini} and $E^{1/2}_{N-2}\les \ep$. Then for any $t\in[0,T]$, we have energy estimate:
	\begin{equation}       \label{LE}
		\frac{d}{dt} \EE_{N-2}\lesssim \<t\>^{-4/3+\de}E_{N-2}E_N^{1/2}(1+E_{N-2}),
	\end{equation}
where $\EE_{N-2}$ is equivalent to $E_{N-2}$.
\end{prop}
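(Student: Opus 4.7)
The argument follows the blueprint of the high-order estimate in Proposition \ref{Prop_va}, specialized to indices $|a|\leq N-2$. First I would verify the equivalence $\EE_{N-2}\approx E_{N-2}$: the correction $\int \tfrac{1}{2}|Z^au\cdot\nab\phi|^2+D_tZ^a\phi(Z^au\cdot\nab\phi)\,dx$ is controlled by $\|\nab\phi\|_{\lf}\lesssim \ep$ (from \eqref{Decay-phi2} with $|a|=0$ under the smallness assumption) times $E_{N-2}$, so the correction is of size $C\ep\, E_{N-2}$ and the equivalence follows for $\ep$ sufficiently small. Next, differentiating $\EE_{N-2}$ and using \eqref{EL-VF}, one obtains exactly the same family of nonlinear terms $I_1,\dots,I_4$ from \eqref{Eu-2} and $J_1,J_2,J_3$ from \eqref{Ephi}, but now with $|a|\leq N-2$. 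In particular, the cancellation between $I_{2,3}$ and $J_{1,2}$ that already appeared at top order persists, so that the problematic $\int Z^a u\cdot \nab\phi\,\De Z^a\phi\,dx$ contribution is absorbed. Note that the $\sin^2\phi_2$ corrections $\bR^\phi_{a;1},\bR^\phi_{a;2}$ present in $\bE_a$ are \emph{not} needed at low order because the associated nonlinearities $\mathcal R_{1;a},\mathcal R_{2;a}$ carry their own $\sin\phi$ factor, hence gain the extra $\<t\>^{-1/3+\de}$ decay directly via \eqref{sin2}.

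The key observation driving the sharper rate $\<t\>^{-4/3+\de}$ (instead of $\<t\>^{-1}$ in the high-order estimate) is the following high--low trade-off. For $|a|\leq N-2$ and any splitting $|a|=|b|+|c|$ with $|b|\leq[|a|/2]$, we have $|b|+2\leq N-2$, so the low-derivative factor is freely bounded in $L^\infty$ by Proposition 3.3 and Corollary 3.5 in terms of $E_{N-2}^{1/2}+\XX_{N-2}^{1/2}\lesssim \ep$. Whenever a $\phi$-factor (as opposed to $\nab\phi$) or a $\sin\phi$/$\tan\phi$-factor appears, the improved decay \eqref{Zaphi}--\eqref{sin2} delivers the needed extra $\<t\>^{-1/3+\de}$. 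This handles immediately all contributions coming from $\mathcal R_{1;a}$, $\mathcal R_{2;a}$, and from the quartic terms arising from expansions of $\tan\phi_2$, $\sin\phi_2\cos\phi_2$, etc.; in each such case one factor can be estimated by \eqref{Zaphi} or \eqref{sin2} while the remaining factors are controlled by the $L^\infty$ decay \eqref{Decay-phi2} and by $E_N^{1/2}$ on the highest-derivative piece.

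The main obstacle will be the genuinely quadratic terms lacking an intrinsic $\sin\phi$ factor, namely the convection-type expressions $Z^bu\cdot\nab Z^cu$, the elastic couplings $\nab\cdot(Z^bH\,Z^cH^\top)$, and the analogous $H$-equation contributions. The plan here is twofold. For the extreme case $|c|=|a|$ (equivalently $|b|=0$), divergence-free integration by parts kills the bulk: using $\div u=0$ gives $\int Z^au\cdot(u\cdot\nab Z^au)\,dx=0$, and similarly the $I_{1,3}$-type boundary term vanishes as in \eqref{I1I4}. For intermediate splittings, I would use the decomposition $\nab=\om\d_r-\tfrac{\om\times\Om}{r}$ and split into the interior region $r<2\<t\>/3$ and the wave-zone $r\geq 2\<t\>/3$. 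In the interior, Lemma \ref{lem4.3} combined with the weighted $L^2$ bounds of Proposition \ref{XX-prop} supplies a $\<t\>^{-1}$ pointwise decay on the low factor and a second factor of $\<t\>^{-1/3+\de}$ comes from applying \eqref{Zaphi} or \eqref{sin2} to an auxiliary $\phi$-factor generated through the structure of the $\phi$-equation. In the wave zone, the good-quantity estimates \eqref{omuH} yield $|\om\cdot u|,|\om_kH_{kj}|\lesssim \<t\>^{-3/2}E_N^{1/2}$, and after pairing $\om\cdot u$ (resp.\ $\om_kH_{kj}$) with $\d_r$ of the remaining factor, the surplus $\<t\>^{-1/2}$ gives precisely the required improvement. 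Collecting all contributions and absorbing $E_N^{1/2}$ on one factor yields \eqref{LE}.
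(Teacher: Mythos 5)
Your proposal follows essentially the same approach as the paper's proof: differentiate the modified energy $\EE_{N-2}=E^{uH}_a+E^\phi_a+\EE^\phi_a$, exploit the cancellation between the $\int Z^au\cdot\nab\phi\,\De Z^a\phi\,dx$ contributions coming from the $u$-equation and from the $\phi$-equation, and extract the $\<t\>^{-4/3+\de}$ rate via the radial decomposition $\nab=\om\d_r-\tfrac{\om\times\Om}{r}$, the good quantities $\om\cdot Z^au$, $\om_kZ^aH_{kj}$ of \eqref{omuH} in the wave zone, and the weighted $L^2$ estimates of Proposition~\ref{XX-prop} in the interior. You are also right that the $\bR^\phi_{a;1},\bR^\phi_{a;2}$ corrections are unnecessary at low order and that the equivalence $\EE_{N-2}\approx E_{N-2}$ follows from $\|\nab\phi\|_{\lf}\les\ep$.

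One imprecision worth flagging: your stated interior mechanism, gaining an extra $\<t\>^{-1/3+\de}$ from an ``auxiliary $\phi$-factor'' via \eqref{Zaphi}/\eqref{sin2}, cannot apply to the genuinely $(u,H)$-quadratic terms ($Z^bu\cdot\nab Z^cu$, $\nab\cdot(Z^bHZ^cH^\top)$, and the analogous $H$-equation terms), since no $\phi$-factor is present there. But these terms are actually easier than you allow for: in the region $r<2\<t\>/3$, writing $1\les\<t\>^{-2}\<t-r\>\cdot\<t\>$ and pairing $\<t-r\>\nab Z^{N-3}H$ (controlled by $\XX_{N-2}$) with $\<t\>Z^{N-2}H$ in $\lf$ (controlled by \eqref{Decay-phi2}) already gives a decay rate $\<t\>^{-2}$, strictly better than needed. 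The $\<t\>^{-4/3+\de}$ bottleneck arises instead in the term $\sum_{|c|<|a|}\int Z^au\cdot\nab Z^b\phi\,\De Z^c\phi\,dx$, where the angular piece $\tfrac{\om\times\Om}{r}Z^b\phi$ of the radial decomposition leaves an undifferentiated $Z^b\phi$-factor bounded via \eqref{Zaphi} only by $\<t\>^{-1/3+\de}$. So your intuition for where the $\<t\>^{-1/3+\de}$ enters is correct, but it is triggered by the angular part of the derivative decomposition acting on $\phi$, not by any auxiliary factor in the purely elastic/convective terms.
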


 Now we begin the proof of Proposition \ref{LE-prop}. In fact,
the estimate \eqref{LE} can be obtained from
\begin{equation}       \label{LEv}
	\frac{d}{dt}E^{uH}_a\leq -\int  (Z^a u\cdot   \nab  \phi)\De Z^a\phi\ dx+ C\<t\>^{-4/3+\de}E_{N-2}E_N^{1/2}(1+E_{N-2}),
\end{equation}
and
\begin{equation}   \label{LEphi}
	\frac{d}{dt}(E^\phi_{a}+\EE^\phi_{a})\leq \int (Z^au\cdot\nab\phi)\De Z^a\phi\ dx+ C\<t\>^{-4/3+\de}E_{N-2}E_N^{1/2}(1+E_{N-2}).
\end{equation}
Hence it suffices to prove the above two estimates.

\begin{proof}[Proof of the energy estimate \eqref{LEv}]
	\

By the first two equations in (\ref{EL-VF}), we calculate
\begin{align*}
	&\frac{1}{2}\frac{d}{dt}\Big(\lV  Z^a u\rV_{L^2}^2+\| Z^aH\|_{L^2}^2\Big)=\int  Z^a u\cdot f_a
	+Z^aH_{ij}\cdot  g_{a,ij} \ dx\\
	& =\sum_{b+c=a}C_a^b \int Z^a u \big(-Z^b u\cdot \nab Z^c u +\nab \cdot (Z^b H Z^cH^T)\big)\\
	&\quad +Z^a H_{ij}\cdot \big(-Z^b u\cdot\nab  Z^c H_{ij}+\d_k Z^b u_i Z^c H_{kj}\big)\ dx\\
	&\quad 	- \sum_{b+c=a}C_a^b\int  Z^a u\ \d_j  (\nab Z^b\phi\cdot \d_j Z^c\phi)\ dx+\int  Z^a u\  \RR_{1;a}\ dx\\
	:&=K_1+K_2+K_3.
\end{align*}
Then we proceed to bound the terms $K_1- K_3$ separately.

\medskip
\emph{1) Estimate of $K_1$.}

Similar to \eqref{I1I4}, using $\div u=0$, $\d_j H_{jk}=0$ and integration by parts, we rewrite the $K_1$ as
\begin{align*}
	K_1&= \sum_{b+c=a;|c|<| a|}C_a^b \int  Z^a u_i \big(-Z^b u\cdot \nab Z^c u_i + \d_j Z^c H_{ik} Z^b H_{jk}\big)\ dx\\
	&\quad +\sum_{b+c=a:|c|<|a|}C_a^b \int   Z^a H_{ij}\cdot \big(-Z^b u\cdot \nab Z^c H_{ij}+\d_k Z^c u_i Z^b H_{kj}\big)\ dx.
\end{align*}
 Thanks to the constriants $\nab \cdot Z^b u=\nab\cdot Z^b H^T=0$, all of the above terms are dealt with in similar method. Here we only consider the following term in detail
\begin{align*}
	k_1:=\sum_{|a|\leq N-2}\sum_{b+c=a;|c|<| a|}|\d_j Z^c H_{ik} Z^b H_{jk}|.
\end{align*}
 In the region $r<2\<t\>/3$, applying \eqref{Decay-phi2} and Proposition \ref{XX-prop}, we infer
\begin{align*}
	\|k_1\|_{L^2(r<2\<t\>/3)}&\les \<t\>^{-2}\|\<t-r\>\nab Z^{N-3} H\|_{L^2}\|\<t\> Z^{N-2} H \|_{\lf} \\
	&\les E^{1/2}_{N-2}\<t\>^{-2}\XX^{1/2}_{N-2} (E^{1/2}_{N}+\XX^{1/2}_{N})\les \<t\>^{-2}E_{N-2}E^{1/2}_N.
\end{align*}
In the region $r\geq 2\<t\>/3$, we utilize \eqref{decomposition} to write $k_1$ as
\begin{align*}
	k_1=\sum_{|a|\leq N-2}\sum_{b+c=a;|c|<| a|}\Big|\Big(\om_j \d_r-\frac{(\om\times \Om)_j}{r} \Big)Z^c H_{ik} Z^b H_{jk}\Big|.
\end{align*}
Then it follows from \eqref{omuH}, \eqref{Decay-phi2} and Proposition \ref{XX-prop} that
\begin{align*}
	\|k_1\|_{L^2(r\geq 2\<t\>/3)}&\les \<t\>^{-3/2}\|\d_r Z^{N-3}H\|_{L^2}\|r^{3/2}\om_j Z^{N-2}H_{jk}\|_{\lf}+\<t\>^{-1}\|\Om Z^{N-3}H\|_{\lf}\|Z^{N-2}H\|_{L^2}\\
	&\les \<t\>^{-3/2}E^{1/2}_{N-2}E^{1/2}_N
+\<t\>^{-2}\big(E^{1/2}_{N}+\XX^{1/2}_N\big)E^{1/2}_{N-2}\les \<t\>^{-3/2}E^{1/2}_{N-2}E^{1/2}_N.
\end{align*}
In a similar way, we can also bound the other three terms in $K_1$. Hence, we obtain
\begin{align*}
	K_1\les \||Z^a u|+|Z^a H|\|_{L^2}\<t\>^{-3/2}E^{1/2}_{N-2}E^{1/2}_N\les \<t\>^{-3/2}E_{N-2}E^{1/2}_N.
\end{align*}

\medskip
\emph{2) Estimate of $K_2$.}

By integration by parts, we rewrite the $K_2$ as
\begin{align}     \nonumber
	K_2&=- \sum_{b+c=a}C_a^b\int  Z^a u\  (\nab \d_j  Z^b\phi\cdot \d_j Z^c\phi+\nab Z^b\phi \De Z^c\phi)\ dx\\\nonumber
	&=- \frac{1}{2}\sum_{b+c=a}C_a^b\int  Z^a u\cdot  \nab( \d_j  Z^b\phi\cdot \d_j Z^c\phi)\ dx
	- \sum_{b+c=a}C_a^b\int  Z^a u\cdot   \nab  Z^b\phi\De Z^c\phi\ dx\\\nonumber
	&=-\int  Z^a u\cdot   \nab  \phi\De Z^a\phi\ dx- \sum_{b+c=a;|c|<|a|}C_a^b\int  Z^a u\cdot   \nab  Z^b\phi\De Z^c\phi\ dx\\\label{K21}
	:&= K_{2,1}+K_{2,2}.
\end{align}
The first integral in the second line above vanishes by $\div Z^a u=0$. The term $K_{2,1}$ is retained, which can be cancelled by $L_{1,2}$ in \eqref{L21} later.
For the integral $K_{2,2}$, in the region $r<\frac{2}{3}\<t\>$, we have from \eqref{Decay-phi2} and Proposition \ref{XX-prop} that
\begin{align*}
	&- \sum_{b+c=a;|c|<|a|}C_a^b\int_{r<\frac{2}{3}\<t\>}  Z^a u\cdot   \nab  Z^b\phi\De Z^c\phi\ dx\\
	&\les \sum_{b+c=a;|c|<|a|} \<t\>^{-2}\|Z^a u\|_{L^2} \|\<t\>\nab Z^b\phi\|_{\lf}\|\<t-r\>\De Z^c\phi\|_{L^2}\\
	&\les \<t\>^{-2}E^{1/2}_{N-2}(E^{1/2}_{N}+\XX^{1/2}_{N})\XX^{1/2}_{N-2}\les \<t\>^{-2}E_{N-2}E^{1/2}_N.
\end{align*}
 In the region $r\geq \frac{2}{3}\<t\>$, combining \eqref{decomposition}, we further write the $K_{2,2}$ as
\begin{align*}
	\tilde K_{2,2}:&= \sum_{b+c=a;|c|<|a|}C_a^b\int_{r\geq 2\<t\>/3} Z^a u\cdot \Big(\om\d_r  - \frac{x}{r^2}\times \Om\Big) Z^b\phi\De Z^c\phi\ dx.
\end{align*}
Then applying \eqref{omuH}, \eqref{Zaphi}, interpolation inequality and Proposition \ref{XX-prop}, we deduce
\begin{align*}
	\tilde K_{2,2}&\les  \<t\>^{-3/2}\|r^{3/2}(\om\cdot Z^{N-2} u)\|_{\lf}  E_{N-2}\\
	&\quad +\<t\>^{-1}E^{1/2}_{N-2}\Big(\|\Om Z^{[(N-2)/2]}\phi\|_{\lf}\|\De Z^{N-3}\phi\|_{L^2}+\|\Om Z^{N-2}\phi\|_{L^6}\|\De Z^{[(N-2)/2]}\phi\|_{L^3}\Big)\\
	&\les \<t\>^{-3/2}E_{N-2}E^{1/2}_N+\<t\>^{-4/3+\de}E_{N-2}
\big(E^{1/2}_{[(N-2)/2]+3}+\XX^{1/2}_{[(N-2)/2]+3}\big)\\
	&\quad +\<t\>^{-1}E^{1/2}_{N-2}E^{1/2}_{N-1}E_{[(N-2)/2]+1}^{1/3}\<t\>^{-1/3}
\big(E^{1/2}_{[(N-2)/2]+3}+\XX^{1/2}_{[(N-2)/2]+3}\big)^{1/3}\\
	&\les \<t\>^{-4/3+\de}E_{N-2}E^{1/2}_N.
\end{align*}

\medskip 	
	
\emph{3) Estimate of $K_3$.}

The term $K_3$ is essentially a high order term. By \eqref{R1a}, \eqref{Zaphi} and \eqref{Decay-phi2}, we have
\begin{align*}
	K_3 &=\int Z^a u\cdot \sum_{b+c+e=a}C_a^{b,c}\d_j (Z^b\sin^2\phi_2 \nab Z^c\phi_1\d_j Z^e\phi_1)\ dx\\
	&\les E^{1/2}_{N-2}\Big(\|Z^{ [N/2]}(\sin^2\phi)\|_{\lf}^2\|\nab Z^{ [N/2]}\phi\|_{\lf}\|\nab Z^{N-1}\phi\|_{L^2}\\
	&\quad +\|Z^{N-2 }\nab (\sin^2\phi)\|_{L^{6}}\|\nab Z^{[N/2]}\phi\|_{L^3}\|\nab Z^{ [N/2]}\phi\|_{\lf}\Big)\\
	&\les E_{N-2}^{2}E^{1/2}_N\<t\>^{-5/3+2\de}.
\end{align*}
This completes the proof of the estimate \eqref{LEv}.
\end{proof}

\begin{proof}[Proof of the energy estimate \eqref{LEphi}]
\

Using the $ Z^a\phi$-equation in (\ref{EL-VF}) and $\div u=0$, we calculate
\begin{align*}
	&\frac{1}{2}\frac{d}{dt}(\|D_t Z^a\phi\|_{L^2}^2+\|\nab Z^a\phi\|_{L^2}^2)
	= \int D_t Z^a\phi D_t^2 Z^a\phi+\nab Z^a\phi \d_t \nab Z^a\phi\ dx \\
	&= \int D_t Z^a\phi\cdot h_a-\nab Z^a\phi \nab u\cdot \nab Z^a\phi  \ dx\\
	:&=L_1+L_2+L_3,
\end{align*}
where
\begin{align*}
	L_1&=-\int D_t Z^a\phi\cdot D_t( Z^a u\cdot\nab \phi)\ dx,\\
	L_2&=-\int D_t Z^a\phi\cdot \Big(\sum_{b+c=a;|c|<|a|}C_a^bZ^b u\cdot\nab\d_tZ^c\phi+\sum_{b+c=a;|b|,|c|<|a|}C_a^b\d_t( Z^b u\cdot\nab Z^c\phi)\Big)\\
	&\quad +\nab Z^a\phi \nab u\cdot \nab Z^a\phi\ dx,\\
	L_3&=- \int D_t Z^a\phi\cdot \Big(\sum_{b+c+e=a;|c|,|e|<|a|}C_a^{b,c}Z^b u\cdot\nab(Z^c u\cdot\nab Z^e\phi)-\RR_{2;a}\Big)\ dx.
\end{align*}
It remains to bound the nonlinear terms $L_1-L_3$.

\medskip
\emph{1) Estimate of $L_1$.}

By integration by parts, \eqref{EL-VF} and \eqref{ha}, we rewrite $L_1$ as
\begin{align}\nonumber
	L_1&=-\frac{d}{dt}\int D_t Z^a\phi (Z^a u\cdot\nab\phi)\ dx+ \int D_t^2 Z^a\phi (Z^au\cdot\nab\phi)\ dx\\\nonumber
	&=\Big(-\frac{d}{dt}\int D_t Z^a\phi (Z^a u\cdot\nab\phi)\ dx-\frac{1}{2}\frac{d}{dt}\int |Z^au\cdot\nab\phi|^2 \ dx\Big)\\\nonumber
	&\quad + \int \De Z^a\phi (Z^au\cdot\nab\phi)\ dx+\int \overline{h}_a (Z^a u\cdot\nab\phi)\ dx\\     \label{L21}
	:&= L_{1,1}+L_{1,2}+L_{1,3},
\end{align}
where $\overline{h}_a$ is given in \eqref{ha-2}.

The first term $L_{1,1}$ is actually the term $-\frac{d}{dt}\EE^\phi_{a}$. The second term $L_{1,2}$ is cancelled by $K_{2,1}$ in \eqref{K21} above. For the last integral $L_{1,3}$, it follows from \eqref{ha-est}  that
\begin{align*}
	\sum_{|a|\leq N-2}\|\overline{h}_a\|_{L^2}\les \<t\>^{-1}E_{N-2}(1+E^{1/2}_{N-2}),
\end{align*}
which combined with \eqref{Decay-phi2} and Proposition \ref{XX-prop} yields
\begin{align*}
	L_{1,3}&\les \|\overline{h}_a\|_{L^2}\|Z^a u\|_{L^2}\|\nab\phi\|_{\lf}\\
	&\les \<t\>^{-1}E_{N-2}(1+E^{1/2}_{N-2})E^{1/2}_{N-2}\<t\>^{-1}(E^{1/2}_{N-2}+\XX^{1/2}_{N-2})\\
	&\les \<t\>^{-2}E^2_{N-2}(1+E^{1/2}_{N-2}).
\end{align*}

\medskip
\emph{2) Estimate of $L_2$.}

It suffices to bound the integrand
\begin{align*}   
	l_2:= \sum_{|a|\leq N-2}\Big(\sum_{b+c=a;|c|<|a|}C_a^b |Z^b u\cdot\nab\d_tZ^c\phi |+\sum_{b+c=a;|b|<|a|}C_a^b|\nab_{t,x} Z^b u\cdot\nab Z^c\phi|\Big).
\end{align*}
For the case $r<\frac{2}{3}\<t\>$, we have from \eqref{Decay-phi2} and Proposition \ref{XX-prop} that
\begin{align}   \nonumber
	\|l_2\|_{L^2(r<2\<t\>/3)}&\les \<t\>^{-2}\|\<t\>Z^{N-2}u\|_{\lf}\|\<t-r\>\nab \d_t Z^{N-3}\phi\|_{L^2}\\\nonumber
	&\quad +\<t\>^{-2}\|\<t-r\>\d_tZ^{N-3}u\|_{L^2}\|\<t\>\nab Z^{N-2}\phi\|_{\lf}\\\nonumber
	&\les \<t\>^{-2}(E^{1/2}_N+\XX^{1/2}_N)\XX^{1/2}_{N-2}\\ \label{l2-r<}
	&\les \<t\>^{-2}E^{1/2}_NE^{1/2}_{N-2}.
\end{align}
For the case $r\geq \frac{2}{3}\<t\>$, we use \eqref{decomposition} to write $l_2$ as
\begin{align*}
	l_2&=\sum_{b+c=a;|c|<|a|\leq N-2}C_a^b |Z^b u\cdot(\om\d_r-\frac{x}{r^2}\times \Om)\d_tZ^c\phi |\\
	&\quad +\sum_{b+c=a;|b|,|c|<|a|\leq N-2}C_a^b|\d_t Z^b u\cdot(\om\d_r-\frac{x}{r^2}\times \Om)Z^c\phi|\\
	&\les \big(|\om\cdot Z^{N-2} u|| \d_r\d_tZ^{N-3}\phi |+|\om\cdot \d_t Z^{N-3} u||\d_rZ^{N-2}\phi|\big)\\
	&\quad +\big(\frac{1}{r}|Z^{N-2} u||\Om\d_tZ^{N-3}\phi |+\frac{1}{r}|\d_t Z^{N-3} u||\Om Z^{N-3}\phi|\big)\\
	:&=l_{2,1}+l_{2,2}.
\end{align*}
Then from \eqref{omuH}, the $l_{2,1}$ is bounded by
\begin{align} \nonumber
	\|l_{2,1}\|_{L^2(r\geq 2\<t\>/3)}&\les \<t\>^{-3/2}\|r^{3/2}\om \cdot Z^{N-2}u\|_{\lf}\|\d_r\d_t Z^{N-3}\phi\|_{L^2}\\\nonumber
	&\quad +\<t\>^{-3/2}\|r^{3/2}\om \cdot \d_t Z^{N-3}u\|_{\lf}\|\d_r Z^{N-2}\phi\|_{L^2}\\  \label{l21}
	&\les \<t\>^{-3/2} E^{1/2}_N E^{1/2}_{N-2}.
\end{align}
Using \eqref{Decay-phi2}, the Sobolev embedding theorem and Proposition \ref{XX-prop}, the term $l_{2,2}$ is estimated as
\begin{align}\nonumber
	\|l_{2,2}\|_{L^2(r\geq 2\<t\>/3)}&\les \<t\>^{-1}\big(\|Z^{N-2}u\|_{L^2}\|\Om\d_t Z^{N-3}\phi\|_{\lf}+\|\d_t Z^{N-3}u\|_{L^3}\|\Om Z^{N-2}\phi\|_{L^6}\big)\\\nonumber
	&\les \<t\>^{-1}\big(E^{1/2}_{N-2} \<t\>^{-1}(E^{1/2}_N+\XX^{1/2}_N)+E^{1/3}_{N-2} \<t\>^{-1/3}(E^{1/6}_N+\XX^{1/6}_N)E^{1/2}_{N-1}   \big)\\ \label{l22}
	&\les \<t\>^{-4/3}E^{1/2}_{N-2}E^{1/2}_N.
\end{align}
Hence, collecting the bounds \eqref{l2-r<}, \eqref{l21} and \eqref{l22}, we obtain
\begin{align*}
	L_2\les \|D_t Z^a\phi\|_{L^2}\|l_2\|_{L^2}\les \<t\>^{-4/3}E_{N-2}E^{1/2}_N.
\end{align*}

\medskip
\emph{3) Estimate of $L_3$.}

From the estimate for cubic term and $\RR_{2;a}$ in \eqref{ha-est}, under the condition $|a|\leq N-2$ and $N\geq 9$, we have
\begin{align*}
	\sum_{|a|\leq N-2}\Big\|\sum_{b+c+e=a;|c|,|e|<|a|}C_a^{b,c}Z^b u\cdot\nab(Z^c u\cdot\nab Z^e\phi)-\RR_{2;a}\Big\|_{L^2}\les \<t\>^{-4/3+\de}E^{3/2}_{N-2},
\end{align*}
which yields
\begin{align*}
	L_3\les \|D_t Z^{N-2}\phi\|_{L^2}\<t\>^{-4/3+\de}E^{3/2}_{N-2}\les \<t\>^{-4/3+\de}E^2_{N-2}.
\end{align*}
Then we complete the proof of estimate \eqref{LEphi}.
\end{proof}

\bigskip
\section{Proofs of the main theorems}\label{sec-2.3}
We first show the proof of  Theorem \ref{Ori_thm} from the Proposition \ref{Prop_va} and \ref{LE-prop}. Once this is done, we can obtain Theorem \ref{Ori_thm0}  by applying Theorem \ref{Ori_thm}.

\begin{proof}[Proof of Theorem \ref{Ori_thm}]
	It follows from Proposition \ref{Prop_va} and \ref{LE-prop} that
	\begin{equation}       \label{Ev-re}
		\frac{d}{dt}\bE_N (t)\leq C_1 \<t\>^{-1}\bE_N  \EE_{N-2}^{1/2}(1+ \EE_{N-2})
	\end{equation}
	and
	\begin{equation} \label{Ephi-re}
		\begin{aligned}
			\frac{d}{dt}\EE_{N-2}(t)\leq C_2 \<t\>^{-4/3+\de} \EE_{N-2}\bE_N^{1/2}(1+ \EE_{N-2}).
		\end{aligned}
	\end{equation}
	
	In view of \eqref{MainAss_dini}, the initial data satisfies
	\begin{equation*}
		\bE^{1/2}_N(0)\leq 2M,\quad \EE^{1/2}_{N-2}(0)\leq 2\ep\leq 2\ep_0.
	\end{equation*}
	By continuity, there exists a positive $T<\infty$ such that following the bounds of $\bE^{1/2}_N(t)$ and $\EE^{1/2}_{N-2}(t)$ are ture for $t\in[0,T]$,
	\begin{align}     \label{bound}
		\bE^{1/2}_N(t)\leq C_0 M\<t\>^\de,\quad  \EE^{1/2}_{N-2}(t)\leq 2\ep e^{C_0^2 M}\leq C_0^{-1}\de.
	\end{align}
	
	Next, we prove that the bounds in \eqref{bound} are true for all $t\in[0,\infty)$. Precisely, we assume that $T_{max}\in [T,\infty)$ is the largest time such that the bounds in \eqref{bound} hold on $[0,T_{\max}]$. Combining the second bound in \eqref{bound}, energy estimates \eqref{Ev-re} and Gronwall's inequality, we have
	\begin{align*}
		\bE_N\leq \bE_N(0) \exp\Big\{\int_0^t 2C_1C_0^{-1}\de\<s\>^{-1}ds\Big\}\leq 4M^2 \<t\>^{2C_1C_0^{-1}\de}.
	\end{align*}
	In a similar way, by the first bound in \eqref{bound} and \eqref{Ephi-re}, we deduce
	\begin{align*}
		\EE_{N-2}\leq \EE_{N-2}(0)\exp\Big\{\int_0^t 2C_2 C_0 M\<s\>^{-4/3+2\de}ds\Big\}\leq 4\ep^2 e^{8C_2 C_0 M}.
	\end{align*}
	
	Therefore, taking $C_0$ to be sufficiently large to verify
	\begin{align*}
		C_0\geq \max\{4, 4C_1,8C_2\}
	\end{align*}
	and letting $\ep_0>0$ to be sufficiently small to fulfill
	\begin{equation*}
		\ep_0\leq \frac{1}{2}C_0^{-1}\de e^{-C_0^2 M},
	\end{equation*}
	we derive the energy bounds
	\begin{align*}
		\bE^{1/2}_N(t)\leq 2M \<t\>^{\de/2}\leq \frac{C_0}{2}M\<t\>^\de,\quad \EE^{1/2}_{N-2}\leq 2\ep e^{4C_2 C_0M}\leq 2\ep e^{ C^2_0M/2}.
	\end{align*}
	The above inequalities show that \eqref{bound} can still be true for $t\in [0,T_{max}+\ep']$ for
	some $\ep'>0$. This contracts to the assumption on $T_{\max}$. Hence we prove the a priori bounds \eqref{bound} on $[0,\infty)$. By the equivalence relation \eqref{eq-relation}, we then obtain the bounds \eqref{energybd}.
	This completes the proof of Theorem \ref{Ori_thm}.
\end{proof}

\begin{proof}[Proof of Theorem \ref{Ori_thm0}]
	\emph{Step 1. We prove that}
	\begin{align}     \label{ini-phi}
		\|(\psi_{0},\ \psi_{1})\|_{H^N_\Lambda}\les C,\quad \|(\psi_{0},\ \psi_{1})\|_{H^{N-2}_\Lambda}\les \ep.
	\end{align}

	Since $(\phi_1,\phi_2)\big|_{t=0}=(\psi_{0,1},\psi_{0,2})$, then from the initial data \eqref{Main-ini0}, we infer that
	\begin{align*}
		&\int |\nab\psi_{0,1}|^2\cos^2\phi_2+|\nab \psi_{0,2}|^2\ dx+\big\||\nab\psi_{0,1}|^2\cos^2\phi_2+|\nab\psi_{0,2}|^2\big\|_{\lf}\\
		&\les \|\nab d_0\|_{L^2\cap \lf}^2
		\les \|\nab d_0\|_{H^2}^2\les \ep^2.
	\end{align*}
	This implies
	\begin{align*}
		\|\nab\psi_{0,1}\|_{L^2\cap \lf}+\|\nab\psi_{0,2}\|_{L^2\cap \lf}\les \ep.
	\end{align*}
	Then by induction, we have
	\begin{align*}
		\|\nab d_0\|_{H^k}&\gtrsim \|\nab\psi_{0}\|_{H^k}-\sum_{2\leq r\leq k+1}\sum_{l_1+\cdots l_r\leq k-r}\|\nab^{l_1+1}\psi_{0}\cdots \nab^{l_r+1} \psi_{0}\|_{L^2}\\
		&\gtrsim \|\nab\psi_{0}\|_{H^k}-\ep \|\nab\psi_{0}\|_{H^{k-1}},
	\end{align*}
	which yields
	\begin{equation*}
		\|\nab \psi_{0}\|_{H^{N}}\les C,\quad \|\nab \psi_{0}\|_{H^{N-2}}\les \ep.
	\end{equation*}
	Along the same lines, we can also calculate
	\begin{align*}
		\sum_{|a|\leq j}\|\nab \La^a d_0\|_{L^2}&\gtrsim \sum_{|a|\leq j}\|\nab \La^a \psi_{0}\|_{L^2}-\sum_{2\leq r\leq j+1}\sum_{l_1+\cdots l_r\leq j+1-r}\|\nab \La^{l_1}\psi_{0} \La^{l_2+1}\psi_{0}\cdots \La^{l_r+1} \psi_{0}\|_{L^2}\\
		&\gtrsim \sum_{|a|\leq j}\|\nab \La^a \psi_{0}\|_{L^2}-\|\nab\psi_{0}\|_{H^1}\sum_{|a|\leq j}\|\nab \La^a \psi_{0}\|_{L^2}\\
		&\quad -\sum_{|a|\leq j-1}\|\nab \La^a \psi_{0}\|_{L^2}^2\big(1+\|\nab \La^a \psi_{0}\|_{L^2}^{j-1}\big).
	\end{align*}
	By induction, this gives
	\begin{align*}
		\sum_{|a|\leq N-2}\|\nab \La^a \psi_{0}\|_{L^2}\les \ep,\quad \sum_{|a|\leq N}\|\nab \La^a \psi_{0}\|_{L^2}\les C.
	\end{align*}
	Hence the bound \eqref{ini-phi} for $\psi_0$ is obtained. The bound \eqref{ini-phi} for $\psi_1$ can be proved similarly.

	\medskip
	\emph{Step 2. We prove the global well-posedness of \eqref{ori_sys}.}
	By \emph{Step 1}, we show that the initial data $(u_0,H_0,\psi_{0},\psi_{1})$ satisfies \eqref{MainAss_dini}. Then from Theorem \ref{Ori_thm}, there existes a unique global solution $(u,H,\phi)$ satisfying \eqref{energybd}. By the expressions
	\begin{equation*}
		F=H+I,\quad d=(\cos\phi_1\cos\phi_2,\sin\phi_1\cos\phi_2,\sin\phi_2),
	\end{equation*}
	we obtain the unique solution $(u,F,d)$ to \eqref{ori_sys}. Moreover,
	\begin{align*}
		\sum_{|a|\leq N-2}\|\nab Z^a d\|_{L^2}&\les \sum_{|a|\leq N-2}\|\nab Z^a \phi\|_{L^2}+\sum_{2\leq r\leq N-1}\sum_{l_1+\cdots l_r\leq N-1-r}\|\nab Z^{l_1}\phi Z^{l_2+1}\phi\cdots Z^{l_r+1} \phi\|_{L^2}\\
		&\les \sum_{|a|\leq N-2}\|\nab Z^a \phi\|_{L^2}\les \ep.
	\end{align*}
	Similarly, we also have
	\begin{align*}
		\sum_{|a|\leq N}\|\nab Z^a d\|_{L^2}&\les \sum_{|a|\leq N}\|\nab Z^a \phi\|_{L^2}+\sum_{2\leq r\leq N+1}\sum_{l_1+\cdots l_r\leq N+1-r}\|\nab Z^{l_1}\phi Z^{l_2+1}\phi\cdots Z^{l_r+1} \phi\|_{L^2}\\
		&\les \sum_{|a|\leq N}\|\nab Z^a \phi\|_{L^2}\les \<t\>^\de.
	\end{align*}
	Therefore the energy bounds in \eqref{energybd0} follows. This completes the proof of Theorem \ref{Ori_thm0}.
\end{proof}

\bigskip
\appendix
\section{Local well-posedness} \label{local}
This section is devoted to proving the local well-posedness of \eqref{ori_sys} for large data.
We shall apply the zero-viscosity limit method.  To start with, we  consider the perturbed liquid crystal elastomers
\begin{equation}   \label{pLCE}
	\left\{\begin{aligned}
		&\d_t u-\mu\De u+u\cdot \nab u+\nab p=\nab\cdot(FF^{\top})- \div  (\nab d\odot\nab d),\\
		&\d_t F+u\cdot\nab F=\nab u F,\\
		&D_t^2 d-\De d=(-|D_t d|^2+|\nab d|^2)d,\\
		&(u,F,d,\d_t d)\big|_{t=0}=(u_0,F_0,d_0,d_1)
	\end{aligned}
	\right.
\end{equation}
with constraints
\begin{align} \label{constraints1-reA}
	&\div u=0,\quad \nab\cdot F^T=0,\quad
	\sum_{m}F_{mj}\d_m F_{ik}=\sum_m F_{mk}\d_m F_{ij},\quad
	|d|=1,
\end{align}
where $\mu>0$ is a positive constant. Then we show the following uniform estimates.

\begin{lemma}\label{E-lem}
	Let integer $k_0=3$ and $k\geq k_0$ and denote  $U=(u,F,D_t d,\nab d)$. Along the perturbed LCEs \eqref{pLCE}-\eqref{constraints1-reA}, we have the energy estimates
	\begin{align*}
		\frac{1}{2}\frac{d}{dt}\|U\|_{H^k}^2+\mu \|\nab u\|_{H^k}^2\leq C\|U\|_{H^k}^2\|U\|_{H^{k_0}}(1+\|U\|_{H^{k_0}}).
	\end{align*}
	Here the constant $C>0$ does not depend on $\mu$.
\end{lemma}
\begin{proof}
	From the system \eqref{pLCE}, we derive
	\begin{align}  \label{E-est}
		\frac{1}{2}\frac{d}{dt}\|U\|_{\dot{H}^k}^2+\mu \|\nab u\|_{\dot{H}^k}^2&=\int \d^k u\cdot \d^k \big(-u\cdot\nab u+\nab\cdot(FF^T)-\d_j(\d_j d\d_i d)\big)\\ \nonumber
		&\quad +\d^k F\cdot \d^k \big(u\cdot\nab F+\nab u F\big)-\d^k D_t d\cdot\d^k(u\cdot\nab D_t d)\\\nonumber
		&\quad +\d^k(u\cdot\nab d)\d^k \De d+\d^k D_t d\cdot \d^k [(-|D_t|^2+|\nab d|^2)d]\ dx
	\end{align}
	Due to the constriants \eqref{constraints1-reA}, we calculate a part of the right hand side as
	\begin{align}  \nonumber
		&\int \d^k u\cdot \d^k \big(\nab\cdot(FF^T)-\d_j(\d_j d\nab d)\big)+\d^k F\cdot\d^k(\nab uF)+\d^k (u\cdot\nab d)\d^k\De d \ dx\\\nonumber
		&=\int -\d^k \d_j u_i\cdot \d^k (F_{il}F_{jl})+\d^k F_{il} \cdot \d^k (\d_j u_i F_{jl})-\d^k u_i \d^k (\De d\d_i d)\\\nonumber
		&\quad -\d^k u_i \d^k \frac{1}{2}\d_i |\nab d|^2
		+\d^k (u\cdot\nab d)\d^k\De d\ dx\\ \label{NonT}
		&=\int \sum_{k_1+k_2=k;|k_1|< |k|} \d^k u_i \d^{k_1}\d_j F_{il}\d^{k_2}F_{jl}+ \d^k F_{il} (\d^{k_1}\d_j u_i \d^{k_2}F_{jl})-\d^k u_i \d^{k_1}\De d \d^{k_2}\d_i d \\\nonumber
		&\quad +\sum_{k_1+k_2=k;|k_i|< |k|} \d^{k_1}u\cdot \nab \d^{k_2} d \d^k\De d+ u\cdot \nab \d^k d\d^k\De d\ dx
	\end{align}
	The last integrand is rewritten as
	\begin{align*}
		&\int \sum_{k_1+k_2=k;|k_i|< |k|} \d^{k_1}u\cdot \nab \d^{k_2} d \d^k\De d+ u\cdot \nab \d^k d\d^k\De d\ dx\\
		&= \int -\sum_{k_1+k_2=k;|k_i|< |k|} \d_j(\d^{k_1}u\cdot \nab \d^{k_2} d )\d^k\d_j d -\d_j u\cdot\nab \d^k d \d^k \d_j d-\frac{1}{2}u\cdot\nab |\d^k\nab d|^2\ dx\\
		&\leq \sum_{k_1+k_2=k;k_i\leq k}\int |\d^k U| |\d^{k_1}U| |\d^{k_2}U|\ dx.
	\end{align*}
	The other terms in \eqref{NonT} can also be bounded by the above right hand side.
	
	Now we return to the energy estimate \eqref{E-est}, and obtain
	\begin{align*}
		&\frac{1}{2}\frac{d}{dt}\|U\|_{\dot{H}^k}^2+\mu \|\nab u\|_{\dot{H}^k}^2\\
		&\leq  \int \sum_{k_1+k_2=k;|k_1|< |k|}|\d^k U| |\d^{k_1+1}U| |\d^{k_2}U|+\sum_{k_1+k_2+k_3=k}|\d^k U||\d^{k_1}U||\d^{k_2}U||\d^{k_3}d|\ dx\\
		&\les \|U\|_{H^k}^2\|U\|_{H^{k_0}}(1+\|U\|_{H^{k_0}}).
	\end{align*}
	This completes the proof of the lemma.
\end{proof}

Then we have the following local existence result.

\begin{proof}[Proof of Theorem \ref{LWP}]
	Using the similar argument as \cite{Jiang-Luo-2018,Jiang-Liu-19}, for each $\mu>0$, one can get a unique smooth solution $(u_\mu,F_\mu,d_\mu)$ to \eqref{pLCE} on a maximal time interval $[0,T_\mu)$. Let $\tilde T_\mu\in[0,T_\mu]$ be the maximal time such that for all $t\in [0,\tilde T_\mu)$,
	\begin{align}   \label{unf-bd}
		\|(u_\mu,F_\mu,D_t d_\mu,\nab d_\mu)\|_{H^{k_0}}\leq 2(\|(u_0,F_0, d_1,\nab d_0)\|_{H^{k_0}}+1).
	\end{align}
	Then we  show that there is a uniform positive lower bound for $\tilde T_\mu$.
	
	To proceed, we have from Lemma \ref{E-lem} that
	\begin{align*}
		\frac{d}{dt}\|U_\mu(t)\|_{H^{k_0}}^2\leq 2C \|U_\mu(t)\|_{H^{k_0}}^3(1+\|U_\mu(t)\|_{H^{k_0}}).
	\end{align*}
	Denote $f_\mu(t)=1+\|U_\mu(t)\|_{H^{k_0}}$, we derive
	\begin{align*}
		\frac{d}{dt}f_\mu(t)\leq Cf_\mu^3(t),
	\end{align*}
	and hence
	\begin{align*}
		f^{-2}_\mu(0)-2Ct\leq f^{-2}_\mu(t).
	\end{align*}
	Then on the time interval $[0,\frac{1}{4Cf^2(0)}]$, we have the bound
	\begin{align*}
		f_\mu(t)\leq \sqrt{2}f_\mu(0),
	\end{align*}
	which also implies the bound \eqref{unf-bd}. Hence, we obtain a uniform positive lower bound
	\[\tilde T_\mu\geq T_0:= \frac{1}{4C(1+\|U(0)\|_{H^{k_0}})^2}.\]
	By standard arguments, we can extract a subsequence $\mu_i\rightarrow 0$ such that $(u_{\mu_i},F_{\mu_i},d_{\mu_i})$ converges smoothly to a limit $(u,F,d)$ on the interval $[0,T_0]$ which is the desired solution to the LCEs \eqref{ori_sys}. Moreover, the estimate \eqref{LE-local} is a direct consequence of \eqref{unf-bd}.
	The uniqueness can be verified directly.
	
	By Lemma \ref{E-lem} and interpolation, for $\mu=0$ and any $\si>k_0$ we obtain
	\begin{align*}
	    \frac{d}{dt}\|U\|_{H^\si}^2\leq 2\tilde C_\si C_0(1+C_0) \|U\|_{H^\si}^2 ,
	\end{align*}
	which deduces
	\[ \|U(t)\|_{H^\si}\leq e^{\tilde C_\si C_0(1+C_0)T}\|U(0)\|_{H^\si}.    \]
	This completes the proof of theorem.
\end{proof}

\bigskip
\section*{Acknowledgment}
X. Hao was supported by the NSFC Grant No. 12071043. J. Huang is supported by Beijing Institute of Technology Research Fund Program for Young Scholars, and also supported by the NSFC Grant No. 12271497. The author N. Jiang is supported by the grants from the National Natural Foundation of China under contract Nos. 11971360 and 11731008, and also supported by the Strategic Priority Research Program of Chinese Academy of Sciences, Grant No. XDA25010404.

\bigskip

\end{document}